\newtheorem{thm}{Theorem}[section] 
\newtheorem{cor}[thm]{Corollary} 
\newtheorem{prop}[thm]{Proposition} 
\newtheorem{defn}[thm]{Definition} 
\theoremstyle{definition} 
\newtheorem{rem}[thm]{Remark} 
\theoremstyle{remark}
\def\O{\Omega}
\def\e{\epsilon}
\def\S{\Sigma} 
\def\n{\nabla}
\def\p{\partial}
\def\a{\alpha}
\def\b{\beta}
\def\n{\nabla}
\def\o{\omega}
\def\O{\Omega}
\def\p{\partial}
\def\e{\epsilon}
\def\a{\alpha}
\def\b{\beta}
\def\g{\gamma}
\def\d{\delta}
\def\k{\kappa}
\def\s{\sigma}
\def\n{\nabla}
\def\<{\langle}
\def\>{\rangle}
\def\div{{\rm div}}
\def\n{\nabla}
\def\RR{\mathbb{R}}
\def\BB{\mathbb{B}}
\def\SS{\mathbb{S}}
\def\o{\omega}
\def\O{\Omega}
\def\p{\partial}
\def\e{\epsilon}
\def\a{\alpha}
\def\b{\beta}
\def\g{\gamma}
\def\d{\delta}
\def\s{\sigma}
\def\ep{\epsilon}
\def\W{\mathcal W}
\DeclareMathOperator{\Rm}{Rm}
\patchcmd{\abstract}{\scshape\abstractname}{\textbf{\abstractname}}{}{}
\def\@makefnmark{} 
\numberwithin{equation}{section}
\begin{document}
\title[]{Alexandrov-Fenchel inequality for convex hypersurfaces with capillary boundary in a ball}
\author{Liangjun Weng and Chao Xia}

\address{School of Mathematical Sciences, Shanghai Jiao Tong University, Shanghai, 200240, P. R. China}

\email{ljweng08@mail.ustc.edu.cn}

\address{School of Mathematical Sciences, Xiamen University, Xiamen, 361005, P. R. China}
\email{chaoxia@xmu.edu.cn}

	\thanks{2010 {\it Mathematics Subject Classification.} 53C21, 53C44, 35K96, 52A40}

 \keywords{Alexandrov-Fenchel inequalities, quermassintegral, capillary boundary, relative isoperimetric inequality.}
 
\maketitle
\begin{abstract}
In this paper, we first introduce the quermassintegrals for convex hypersurfaces with  capillary boundary  in the unit Euclidean ball $\BB^{n+1}$ and derive its first variational formula. Then by using a locally constrained nonlinear curvature flow, which preserves the $n$-th quermassintegral and non-decreases the $k$-th quermassintegral, we obtain the Alexandrov-Fenchel inequality for convex hypersurfaces with capillary boundary in $\BB^{n+1}$. This generalizes the result in \cite{SWX} for convex hypersurfaces with free boundary in $\BB^{n+1}$.
\end{abstract}

% \tableofcontents

\section{Introduction}

Isoperimetric inequality is one of the  fundamental topics in differential geometry. The classical isoperimetric inequality in the Euclidean space says that among all bounded domains in $\RR^{n+1}$ with fixed enclosed volume, the minimum of the area functional is achieved by round balls.
The higher order generalizations of isoperimetric inequality in convex geometry are the Alexandrov-Fenchel inequalities for quermassintegrals, which say that for closed convex hypersurfaces $\S\subset \RR^{n+1}$, it holds that
$$\int_\S H_kdA\ge \omega_n^{\frac{k-l}{n-l}}\left(\int_\S H_ldA\right)^{\frac{n-k}{n-l}},$$
with equality holding on the round spheres.  
Here $H_k$ is the normalized $k$-th mean curvature of $\S$ and $\omega_n$ denotes the surface area of $\SS^n$. See \cite{BZ,San,Sch} for instance.

Fix a domain $B$ in $\RR^{n+1}$, whose boundary $\p B$ is named support hypersurface, the partitioning problem or the relative isoperimetric problem is to find hypersurfaces $\S$ in $B$ with least area functional among all hypersurfaces with divide $B$ into two components with prescribed ratio. When $B$ is $\BB^{n+1}$, the unit ball in $\RR^{n+1}$, the solution to this problem is given by all spherical caps or totally geodesic balls with free boundary, cf. \cite{BS, BM, WX1}. Here a hypersurface with free boundary means that it  intersects with $\p \BB^{n+1}=\SS^n$ orthogonally. 
The higher order generalization of relative isoperimetric inequality --  the relative Alexandrov-Fenchel inequalities in $\BB^{n+1}$ have been considered by Scheuer-Wang-Xia in \cite{SWX}. They gave the definition of quermassintegral in $\BB^{n+1}$ from the viewpoint of the first variational formula, and they proved the highest order Alexandrov-Fenchel inequalities and the Gauss-Bonnet-Chern formula for convex hypersurfaces with free boundary  in $\BB^{n+1}$. Willmore type geometric inequalities for  convex hypersurface  with free  boundary in a  ball have been obtained in \cite{LS17, SWX}.

Motivated by the study of the equilibrium shapes of a liquid confined in a given container,  there is a more general relative isoperimetric problem in a fixed domain $B$,  where the energy functional 
\begin{eqnarray}\label{energy-func}
|\p\O\cap B|-\cos \theta |\p B\cap \O|
\end{eqnarray}
 is involved. Here $\O\subset B$ and $\theta\in (0, \pi)$, $|\p B\cap \O|$ is the so-called wetting energy. For more physical background, we refer to Finn's book \cite{Finn}.
It is known that among all hypersurfaces which divide $\BB^{n+1}$ into two components with prescribed energy, the ones with least energy functional are all spherical caps or totally geodesic balls which intersects at the constant angle $\theta$, cf. \cite{WX1}. We shall call such boundary condition the $\theta$-capillary boundary.

In the same spirit of \cite{SWX}, we are interested in finding the quermassintegral for convex hypersurfaces with  $\theta$-capillary boundary in $\BB^{n+1}$, which should be the right higher order generalization of the energy functional. Moreover, in this paper, we shall study the Alexandrov-Fenchel inequalities and the Gauss-Bonnet-Chern formula for such hypersurfaces.

\medskip

Let $n\ge 2$. Let $\mathbb{B}^n$ and $\mathbb{B}^{n+1}$ be  the $n$-dimensional and $(n+1)$-dimensional open Euclidean unit ball centered at the origin, respectively. Let $\SS^n=\p \mathbb{B}^{n+1}$ and $\bar N$ denote the unit outward normal of $\SS^n$.
Let $\S\subset \overline{\mathbb{B}}^{n+1}$ be a smooth, orientable, compact hypersurface with boundary, given by an isometric embedding 
$x:\overline{\mathbb{B}}^n\to \overline{\mathbb{B}}^{n+1}$.  $\S$ is said to be properly embedded, if $$\Sigma=x(\mathbb{B}^n)\subset {\mathbb{B}}^{n+1},\quad \partial \Sigma=x(\p \mathbb{B}^n)\subset \SS^n.$$
We are concerned with the case when $\S$ is a convex hypersurface with capillary boundary. We choose $\nu$ to be the unit normal of $\S$ such that $\S$ is convex in the sense that the second fundamental form defined by $h(X, Y)= -\<D_X Y, \nu\>$ is non-negative definite, where $D$ is the Euclidean connection. 
\begin{defn}\label{capillary bdry}
	  For $\theta\in (0,\pi)$, $\Sigma$ is said to be with a $\theta$-capillary boundary, if $\S$ intersects $\SS^n$ at the constant angle $\theta$, that is, 	
		\begin{eqnarray*}
\langle \nu,\overline{N}\circ x\rangle=-\cos\theta, \hbox{ along }\p \mathbb{B}^n.
\end{eqnarray*}
 In particular, if $\theta=\frac{\pi}{2}$, or equivalently say that $\Sigma$  intersects $\partial \mathbb{B}^{n+1}$ orthogonally, we say that $\Sigma$ is with free boundary.	
\end{defn}
The model examples of properly embedded hypersurfaces with a $\theta$-capillary boundary  are {\it the spherical cap of radius $r\in (0,\infty)$ around some constant unit vector field $e$ in $\RR^{n+1} $with $\theta$-capillary boundary}, given by
\begin{eqnarray}\label{static model}
C_{ \theta,r}(e):=\{x\in \overline{\BB}^{n+1}: \left|x- \sqrt{r^2+2r\cos\theta+1} e\right|= r\},
\end{eqnarray}
and {\it the flat ball   around $e\in \SS^n$ with $\theta$-capillary boundary}, given by
\begin{eqnarray}\label{static model0}
C_{ \theta,\infty}(e):=\{x\in \overline{\BB}^{n+1}: \<x, e\>=\cos\theta\}.
\end{eqnarray}
We shall drop the argument $e$, in cases where it is not relevant.

For our purpose, we always assume $\S$ is a convex hypersurface with $\theta$-capillary boundary for $\theta\in (0,\frac{\pi}{2}]$.
Then $\p\S\subset \SS^n$ is a strictly convex hypersurface in $\SS^n$ (see Proposition \ref{basic-capillary}) and it bounds a strictly convex body in $\SS^n$, which we denote by $\widehat{\p\S}$,  cf. \cite{dCW, GX}.  Denote by $\widehat{\S}$ the domain enclosed by $\S$ in $\mathbb{B}^{n+1}$ which contains $\widehat{\p\S}$.
Let $\s_k$ denote the $k$-th elementary symmetric polynomial, evaluated at the principal curvatures of $\S$, and $$H_k=\frac{1}{\binom{n}{k}}\s_k, \quad 1\le k\le n.$$We make the convention that $$\s_0=H_0=1 \quad \s_{n+1}=H_{n+1}=0.$$
%Let $\widehat{\S}$ be the enclosed bounded domain by $\Sigma$ and inside  $\mathbb{B}^{n+1}$ and denote $\widehat{\p\S} \subset \mathbb{S}^n$ be the enclosed convex domain by $\p\S\subset \mathbb{S}^n$. Hence its boundary is  $\p\widehat{\S}=\Sigma\cup \widehat{\p\S}$, and satisfies $\text{int}{(\S)}\cap \text{int}{(\widehat{\p\S})}=\emptyset$.
Now we define the following geometric functionals for convex hypersurfaces with $\theta$-capillary boundary in $\mathbb{B}^{n+1}$ as follows, which we expect to be the correct counterparts to the quermassintegrals for closed convex hypersurfaces in $\RR^{n+1}$:
 \begin{eqnarray}\label{relative quer 1}
&&	 W_{0,\theta}(\widehat{\Sigma}):= |\widehat{ \S}|,\notag%\quad W_{1,\theta}(\widehat{\Sigma}):=\frac{1}{n+1}\left( |\Sigma|-\cos\theta |\widehat{\p\S}|\right)=\frac{1}{n+1}\left(|\Sigma|-\cos\theta W_0^{\mathbb{S}^n}(\widehat{\p\S})\right),
\\&&
	 W_{1,\theta}(\widehat{\Sigma}):=\frac{1}{n+1}\left(|\Sigma|-\cos\theta W_0^{\mathbb{S}^n}(\widehat{\p\S})\right),
\end{eqnarray}
\begin{eqnarray}\label{relative quer n}
 W_{n+1,\theta}(\widehat{\S}):=\frac{1}{n+1} \left[ \int_\S H_n dA -  \sum_{l=0}^{n-1}(-1)^{n+l}\binom{n}{l} \cos^{n-1-l}\theta \sin^l\theta  W_l^{\SS^n}(\widehat{\p\S})\right],
\end{eqnarray}
and for $1\leq k\leq n-1$,
\begin{eqnarray}\label{relative quer k}
		&&	 W_{k+1,\theta}(\widehat{\S}): = \frac{1}{n+1} \Bigg\{ \int_\S H_k dA- \cos\theta \sin^{k }\theta   W_k^{\SS^{n}}(\widehat{\p\S}) \\&&\quad -\sum_{l=0}^{k-1} \frac{(-1)^{k+l}}{n-l}\binom{k}{l}\Big[(n-k) \cos^2  \theta   +   (k-l)  \Big]  \cos^{k-1-l}\theta\sin^l\theta  W_l^{\SS^n}(\widehat{\p\S})\Bigg\}.\notag
\end{eqnarray} Here for a $k$-dimensional submanifold $M\subset \RR^{n+1}$ (with or without boundary), $|M|$ always denotes the $k$-dimensional Hausdorff measure of $M$. $ W_{k}^{\mathbb{S}^n}$ denotes the $k$-th quermassintegral of the closed convex hypersurface $\p\S\subset \mathbb{S}^n$, see Section \ref{sec 2.3} for more discussion.
We remark that  $W_{1,\theta}(\widehat{\S})$ is just a multiple of the energy functional \eqref{energy-func} we mentioned before, or see e.g. \cite{RR,WW,WX1}.
When $\theta=\frac{\pi}{2}$, $W_{k, \theta}(\widehat{\S})$ is exactly the quermassintegrals defined in \cite{SWX}.

 The reason to define $ W_{k+1,\theta}$ to be the quermassintegral for convex hypersurfaces with $\theta$-capillary boundary in $\overline{\mathbb{B}}^{n+1}$ is the following first variational formula.
  \begin{thm}\label{thm 0}
 	Let $\S_t\subset \overline{\BB}^{n+1}$ be a family of smooth, properly embedded hypersurfaces with $\theta$-capillary boundary, given by $x(\cdot, t): \overline{\BB}^{n}\to \overline{\BB}^{n+1}$, such that $$(\p_t x)^\perp=f\nu,$$ for some normal speed function $f$.  Then for $1\leq k\leq n$,
 	\begin{eqnarray}\label{fvf0}
 		\frac{d}{dt} W_{k,\theta}(\widehat{\Sigma_t})=\frac{n+1-k}{n+1}\int_{\S_t} H_k f dA_t,
 	\end{eqnarray} and 
 \begin{eqnarray*}
	\frac{d}{dt} W_{n+1,\theta}(\widehat{\Sigma_t})=0.
 \end{eqnarray*}
 \end{thm}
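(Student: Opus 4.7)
The plan is to differentiate $W_{k+1,\theta}(\widehat{\Sigma_t})$ directly from the definitions \eqref{relative quer 1}--\eqref{relative quer n}, invoking standard first variation formulas for $\int_\Sigma H_k\,dA$ (on a hypersurface with boundary) and for the spherical quermassintegrals $W_l^{\SS^n}(\widehat{\p\Sigma})$, and then verifying that the boundary integrals produced by the two sources cancel exactly thanks to the $\theta$-capillary condition, leaving only the asserted bulk term.

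The geometric key is the decomposition at $\p\Sigma$ of the two orthonormal frames $\{\nu,\mu\}$ and $\{\bar N,\bar\mu\}$ spanning the plane normal to $\p\Sigma$ in $\RR^{n+1}$, where $\mu$ is the outward conormal of $\p\Sigma$ in $\Sigma$ and $\bar\mu$ is the outward conormal of $\widehat{\p\Sigma}$ in $\SS^n$. The capillary condition $\langle\nu,\bar N\rangle=-\cos\theta$ forces
\begin{align*}
\nu&=\sin\theta\,\bar\mu-\cos\theta\,\bar N,\\
\mu&=\cos\theta\,\bar\mu+\sin\theta\,\bar N,
\end{align*}
and, combined with $\langle\p_t x,\bar N\rangle=0$ along $\p\Sigma$, yields the pointwise identities
$$\langle\p_t x,\mu\rangle=f\cot\theta,\qquad \langle\p_t x,\bar\mu\rangle=\frac{f}{\sin\theta}.$$
These already close the $k=1$ case of the theorem: the first variation of area gives $\tfrac{d}{dt}|\Sigma_t|=n\int_\Sigma H_1 f\,dA+\int_{\p\Sigma}\langle\p_t x,\mu\rangle\,ds$, the first variation of $W_0^{\SS^n}(\widehat{\p\Sigma_t})$ gives $\int_{\p\Sigma}\langle\p_t x,\bar\mu\rangle\,ds$, and the $\cos\theta$-weighted combination dictated by \eqref{relative quer 1} kills the boundary integrals, leaving $\tfrac{n}{n+1}\int_\Sigma H_1 f\,dA$.

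For $2\le k+1\le n+1$ the key ingredient is the first variation formula
$$\tfrac{d}{dt}\int_{\Sigma_t}H_k\,dA_t=(n-k)\int_{\Sigma_t}H_{k+1}f\,dA_t+\int_{\p\Sigma_t}\mathcal B_k\,ds_t,$$
where $\mathcal B_k$ arises from applying the divergence theorem on $\Sigma$ to the Newton tensor $T_{k-1}$ contracted with $\p_t x$. By Proposition~\ref{basic-capillary} the conormal $\mu$ is a principal direction of $\Sigma$ along $\p\Sigma$, so the Weingarten map of $\Sigma$ splits at $\p\Sigma$ into a $\mu$-eigenvalue and a block on $T\p\Sigma$; using the rotation between $\{\nu,\mu\}$ and $\{\bar N,\bar\mu\}$ together with the Gauss equation for $\SS^n\subset\RR^{n+1}$, this tangential block can be expressed in terms of the shape operator of $\p\Sigma\subset\SS^n$. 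Expanding $\mathcal B_k$ accordingly rewrites it as a $(\cos\theta,\sin\theta)$-polynomial combination of $H_l^{\SS^n}(\p\Sigma)\,f$ for $0\le l\le k$. On the other side, the classical spherical identity $\tfrac{d}{dt}W_l^{\SS^n}(\widehat{\p\Sigma_t})=\tfrac{n-l}{n}\int_{\p\Sigma}H_l^{\SS^n}\langle\p_t x,\bar\mu\rangle\,ds=\tfrac{n-l}{n\sin\theta}\int_{\p\Sigma}H_l^{\SS^n}f\,ds$ supplies analogous integrands, and the coefficients in \eqref{relative quer k}--\eqref{relative quer n} are engineered precisely so that the two families of boundary integrals subtract to zero. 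For $k+1=n+1$ the bulk term $(n-k)\int_\Sigma H_{k+1}f\,dA$ vanishes as well because $H_{n+1}\equiv 0$.

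The main obstacle is the explicit identification of $\mathcal B_k$ as the correct $(\cos\theta,\sin\theta)$-polynomial combination of the $H_l^{\SS^n}(\p\Sigma)$, and the verification that the resulting finite algebraic identity in $\cos\theta,\sin\theta$ and the binomial coefficients $\binom{k}{l}$ matches the coefficient pattern in \eqref{relative quer k}--\eqref{relative quer n}. This extends, by induction on $k$, the free-boundary computation ($\theta=\tfrac{\pi}{2}$) of \cite{SWX}; once the identity is in hand, the matching of coefficients and the cancellation are routine bookkeeping.
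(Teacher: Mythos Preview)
Your proposal is correct and follows essentially the same route as the paper. The paper fixes the tangential gauge $T|_{\p\Sigma_t}=f\cot\theta\,\mu$, derives the Robin condition $\nabla_\mu f=(\tfrac{1}{\sin\theta}+\cot\theta\,h_{\mu\mu})f$ from preservation of the capillary angle, and then expands the resulting boundary integrand $\cot\theta\,\sigma_k(h|\mu)-\tfrac{1}{\sin\theta}\sigma_{k-1}(h|\mu)$ via the explicit binomial identity $\sigma_k(I+B)=\sum_{l=0}^k\binom{n-l-1}{n-k-1}\sigma_l(B)$ applied to $h_{\alpha\beta}=\sin\theta\,\widehat h_{\alpha\beta}-\cos\theta\,\delta_{\alpha\beta}$, rather than by induction on $k$; but the architecture---bulk term $(k+1)\int f\sigma_{k+1}$, boundary term rewritten through the $\theta$-rotation of frames and matched against $\tfrac{d}{dt}W_l^{\SS^n}$ with normal speed $f/\sin\theta$---is exactly your outline.
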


 We obtain the following Alexandrov-Fenchel type inequalities and Gauss-Bonnet-Chern formula for convex hypersurfaces with $\theta$-capillary boundary in $\overline{\mathbb{B}}^{n+1}$.
\begin{thm}\label{thm 1}
Let  $\theta \in (0,\frac{\pi}{2}]$ and $\Sigma\subset \overline{\mathbb{B}}^{n+1}$ be a convex hypersurface with $\theta$-capillary boundary.
Then \begin{eqnarray}\label{gbc}
		 W_{n+1,\theta}(\widehat{\S})=\frac{\omega_n}{2(n+1)}I_{\sin^2\theta}(\frac{n}{2},\frac{1}{2}),
	\end{eqnarray}
	where $I_s(\frac{n}{2},\frac{1}{2}) $ is the   regularized incomplete beta function given by $$I_s(\frac{n}{2},\frac{1}{2})=\frac{\int_0^s t^{\frac{n}{2}-1}(1-t)^{-\frac{1}{2}}dt}{\int_0^1 t^{\frac{n}{2}-1}(1-t)^{-\frac{1}{2}}dt}.$$
We also have that for $0\leq k\leq n-1$,
	\begin{eqnarray}\label{af ineq}
		 W_{n,\theta}(\widehat{\S})\geq (f_n\circ f_k^{-1})( W_{k,\theta}(\widehat{\S})),
	\end{eqnarray}where $f_k:=f_k(r)$ is the strictly increasing real function given by
	\begin{eqnarray*}
		f_k(r):= W_{k,\theta}(\widehat{C_{ \theta,r}}),
	\end{eqnarray*} where $C_{\theta,r}$ is the spherical cap given by \eqref{static model}.  Moreover,  equality holds if and only if $\Sigma$ is a spherical cap or a flat ball with $\theta$-capillary boundary.
\end{thm}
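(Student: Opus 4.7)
The proof naturally splits into two parts: the Gauss--Bonnet--Chern identity \eqref{gbc}, which is a rigid equality coming directly from Theorem \ref{thm 0}, and the Alexandrov--Fenchel inequality \eqref{af ineq}, for which I plan to use a locally constrained curvature flow.

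For \eqref{gbc}, the starting point is the second half of Theorem \ref{thm 0}: $W_{n+1,\theta}(\widehat{\Sigma_t})$ is preserved under every smooth normal deformation through properly embedded $\theta$-capillary hypersurfaces. Hence $\Sigma\mapsto W_{n+1,\theta}(\widehat{\Sigma})$ is a deformation invariant and depends only on $n$ and $\theta$. I would pin down its value by evaluating \eqref{relative quer n} on a flat disc $C_{\theta,\infty}(e)$: all principal curvatures vanish there, so $\int_\Sigma H_n\,dA = 0$, while $\p \Sigma$ bounds the geodesic ball of angular radius $\theta$ on $\SS^n$. The remaining boundary term reduces to an alternating sum of the explicit spherical quermassintegrals $W_l^{\SS^n}$ recalled in Section \ref{sec 2.3}, and a combinatorial manipulation collapses the sum into the integral representation of $I_{\sin^2\theta}(\tfrac n2,\tfrac 12)$, yielding \eqref{gbc}.

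For \eqref{af ineq}, I would follow the strategy of \cite{SWX} adapted to the capillary setting. The plan is to evolve $\Sigma$ by a locally constrained curvature flow $\p_t x = f\nu$ whose speed $f$ is a carefully designed combination of a capillary-support-type function (built from $\<x,\nu\>$ and $\cos\theta\<\bar N\circ x,\nu\>$) and the curvature ratio $H_k/H_{k+1}$, arranged so that: (a) the flow preserves the $\theta$-capillary boundary along $\p\BB^{n+1}$; (b) by Theorem \ref{thm 0} combined with capillary Minkowski-type identities, $\frac{d}{dt} W_{n,\theta}(\widehat{\Sigma_t}) = 0$; and (c) the Newton--MacLaurin inequality $H_{k+1}H_{k-1}\le H_k^2$ yields pointwise $\frac{d}{dt} W_{k,\theta}(\widehat{\Sigma_t})\ge 0$. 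Granting long-time existence and smooth convergence to a stationary limit $\Sigma_\infty$ (the main obstacle, see below), the classification of stationary capillary hypersurfaces from \cite{WX1} forces $\Sigma_\infty = C_{\theta,r_\infty}(e_\infty)$. Passing to the limit, (b) gives $W_{n,\theta}(\widehat{\Sigma}) = f_n(r_\infty)$ while (c) gives $W_{k,\theta}(\widehat{\Sigma})\le f_k(r_\infty)$, so $r_\infty \ge f_k^{-1}(W_{k,\theta}(\widehat{\Sigma}))$ by strict monotonicity of $f_k$; applying the strictly increasing $f_n$ then yields \eqref{af ineq}. Equality forces pointwise equality in Newton--MacLaurin on the initial $\Sigma$, so $\Sigma$ is totally umbilical and, together with the $\theta$-capillary condition and the umbilical rigidity in \cite{WX1}, is a spherical cap or a flat disc.

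The main technical obstacle is the long-time existence and smooth convergence of the flow. I expect to need uniform $C^0$ and gradient estimates keeping $\Sigma_t$ properly embedded in $\overline{\BB}^{n+1}$ with the prescribed contact angle, preservation of strict convexity up to the capillary boundary, and uniform boundary $C^2$ and higher estimates for a fully nonlinear curvature equation with oblique (capillary) boundary condition. The boundary $C^2$ estimate is by far the most delicate step: oblique boundary value problems for fully nonlinear curvature flows require a careful barrier and maximum-principle analysis near $\p \BB^{n+1}$, and this is where the restriction $\theta\in(0,\tfrac{\pi}{2}]$ is expected to enter decisively.
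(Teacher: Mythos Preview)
Your overall architecture matches the paper's: deformation invariance of $W_{n+1,\theta}$ for \eqref{gbc}, and a locally constrained curvature flow plus Newton--MacLaurin for \eqref{af ineq}. However, your description of the flow's speed function misses the decisive structural ingredient, and as written the plan would not close.

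The paper's flow is \emph{not} built from $\<x,\nu\>$ or $\<\bar N\circ x,\nu\>$ (the latter is only defined on $\p\S$, not on the interior). Instead it hinges on a \emph{fixed constant direction} $e\in\SS^n$, chosen from the initial data via a Hadamard-type argument (Proposition~\ref{ellipticity}) so that $\<x+\cos\theta\,\nu,e\>>0$ on $\S_0$, together with the conformal Killing field $X_e=\<x,e\>x-\tfrac12(|x|^2+1)e$, which is tangent to $\SS^n$. The speed is
\[
f=\frac{\<x+\cos\theta\,\nu,e\>}{H_n/H_{n-1}}-\<X_e,\nu\>,
\]
and the monotonicity of $W_{k,\theta}$ comes from the higher-order Minkowski identity (Proposition~\ref{minkowski formulae 1})
\[
\int_\S H_{k-1}\bigl(\<x,e\>+\cos\theta\<\nu,e\>\bigr)\,dA=\int_\S H_k\,\<X_e,\nu\>\,dA,
\]
combined with Newton--MacLaurin. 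Without $X_e$ there is no Minkowski formula compatible with the capillary boundary (the boundary term vanishes precisely because $\<X_e+\cos\theta P_e,\mu\>=0$ on $\p\S$), so a speed built from $\<x,\nu\>$ will not give the needed cancellation. Moreover the choice of $e$ is what drives the barrier estimate (spherical caps $C_{\theta,r}(e)$ are static), the uniform positivity of $\<X_e,\nu\>$ that permits writing the flow as a scalar graph equation via a M\"obius transformation, and the parabolicity of the equation; none of this is available from your proposed ingredients. Note also that the paper runs a \emph{single} flow with $F=H_n/H_{n-1}$, which preserves $W_{n,\theta}$ and increases all $W_{k,\theta}$ simultaneously, rather than a separate $H_k/H_{k+1}$-flow for each $k$.

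For \eqref{gbc} your idea of evaluating on a flat disc $C_{\theta,\infty}(e)$ is correct in spirit; the paper instead evaluates $W_{n+1,\theta}(\widehat{C_{\theta,r}(e)})$ in the limit $r\to 0$, where the cap is asymptotic to a half-space cap and the value $\tfrac{\omega_n}{2}I_{\sin^2\theta}(\tfrac n2,\tfrac12)$ drops out of the area formula for spherical caps. Either evaluation works once the deformation invariance is in hand.
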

In particular, for $n=2$, we obtain  a Minkowski  type inequality for convex surfaces in $\overline{\mathbb{B}}^3$ with $\theta$-capillary boundary.
\begin{cor}\label{cor 1}	Let $\theta \in (0,\frac{\pi}{2}]$ and $\Sigma\subset \overline{\mathbb{B}}^3$ be a convex surface with $\theta$-capillary boundary. Then
\begin{eqnarray*}
3W_{3, \theta}(\widehat{\S})= \int_\S H_2dA-\cos\theta|\widehat{\p\S}|+\sin\theta|\p\S|  = {2\pi} (1- \cos\theta),
\end{eqnarray*}
and
	\begin{eqnarray}\label{minkowski ineq}
  W_{2, \theta}(\widehat{\S})&=&\frac{1}{6}\left(\int_\S H dA -\sin\theta\cos\theta|\p\S|+(1+ \cos^2\theta)|\widehat{\p\S}|\right)\nonumber \\
  &\geq & (f_2\circ f_1^{-1})\left(\frac{1}{3}(|\Sigma|-\cos\theta |\widehat{\p\S}|)\right).
	\end{eqnarray}%where $f_i:=f_i(r)$ is the strictly increasing real function given by	\begin{eqnarray*}		f_i(r):=W_i(\mathcal{S}_{\theta,a}(r)),	\end{eqnarray*}for $i=1,2$.  
Moreover,  equality holds if and only if $\Sigma$ is a spherical cap or a flat disk with $\theta$-capillary boundary.
\end{cor}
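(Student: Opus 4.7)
The corollary is a direct specialization of Theorem \ref{thm 1} to dimension $n=2$, combined with explicit evaluation of the defining expressions \eqref{relative quer 1}, \eqref{relative quer k}, \eqref{relative quer n} for the $W_{k,\theta}$ in this low dimension. The plan is therefore to (i) unwind the definitions of $W_{1,\theta}$, $W_{2,\theta}$, $W_{3,\theta}$ with $n=2$, (ii) evaluate the incomplete beta function appearing in \eqref{gbc}, and (iii) quote \eqref{af ineq} with $n=2$, $k=1$ together with its rigidity statement.

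First, for the Gauss--Bonnet--Chern identity, I would substitute $n=2$ into \eqref{relative quer n}. The sum over $l\in\{0,1\}$ produces only two terms; using the classical identifications $W_0^{\SS^2}(\widehat{\p\S})=|\widehat{\p\S}|$ and $W_1^{\SS^2}(\widehat{\p\S})=\tfrac12|\p\S|$ (see Section \ref{sec 2.3}), I get
\begin{eqnarray*}
3\,W_{3,\theta}(\widehat{\S})=\int_\S H_2\,dA-\cos\theta\,|\widehat{\p\S}|+\sin\theta\,|\p\S|.
\end{eqnarray*}
For the right-hand side of \eqref{gbc} with $n=2$, the regularized beta value simplifies explicitly:
\begin{eqnarray*}
I_{\sin^2\theta}\!\left(1,\tfrac12\right)=\frac{\int_0^{\sin^2\theta}(1-t)^{-1/2}dt}{\int_0^1(1-t)^{-1/2}dt}=1-\cos\theta,
\end{eqnarray*}
and $\omega_2=4\pi$, so \eqref{gbc} reads $3W_{3,\theta}(\widehat{\S})=2\pi(1-\cos\theta)$, matching the claim.

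Next, for the expression of $W_{2,\theta}(\widehat{\S})$, I would set $n=2$, $k=1$ in \eqref{relative quer k}. Only $l=0$ appears in the sum, with coefficient $-\tfrac12(\cos^2\theta+1)$, and again $W_0^{\SS^2}(\widehat{\p\S})=|\widehat{\p\S}|$, $W_1^{\SS^2}(\widehat{\p\S})=\tfrac12|\p\S|$. Combining and rewriting the normalized mean curvature $H_1$ in terms of the (unnormalized) mean curvature $H=\kappa_1+\kappa_2=2H_1$ gives exactly
\begin{eqnarray*}
W_{2,\theta}(\widehat{\S})=\frac{1}{6}\Big(\int_\S H\,dA-\sin\theta\cos\theta\,|\p\S|+(1+\cos^2\theta)|\widehat{\p\S}|\Big).
\end{eqnarray*}
On the other hand, \eqref{relative quer 1} gives directly $W_{1,\theta}(\widehat{\S})=\tfrac13\bigl(|\S|-\cos\theta\,|\widehat{\p\S}|\bigr)$.

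Finally, I invoke \eqref{af ineq} with $n=2$ and $k=1$ to obtain
\begin{eqnarray*}
W_{2,\theta}(\widehat{\S})\ge (f_2\circ f_1^{-1})\bigl(W_{1,\theta}(\widehat{\S})\bigr),
\end{eqnarray*}
which upon substituting the computed expression for $W_{1,\theta}(\widehat{\S})$ yields \eqref{minkowski ineq}. The rigidity statement is inherited verbatim from Theorem \ref{thm 1}: equality forces $\S$ to be a spherical cap or a flat disk with $\theta$-capillary boundary. There is no genuine obstacle here beyond bookkeeping; the only mild care needed is normalizing the beta-function value and converting $H_1$ to $H$ consistently so that the numerical constants $1/6$ and $2\pi$ match the statement.
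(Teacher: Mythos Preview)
Your proposal is correct and follows exactly the same route as the paper: Corollary \ref{cor 1} is obtained as the $n=2$ specialization of Theorem \ref{thm 1}, together with the explicit expansion of $W_{1,\theta},W_{2,\theta},W_{3,\theta}$ from \eqref{relative quer 1}--\eqref{relative quer k} and the evaluation $I_{\sin^2\theta}(1,\tfrac12)=1-\cos\theta$ (cf.\ the paper's display \eqref{topo invariant} and the $W_{2,\theta}$ formula in Section~\ref{sec 2.3}). Your bookkeeping of the coefficients and the conversion $H_1=\tfrac12 H$ are accurate, and the rigidity is indeed inherited verbatim from Theorem \ref{thm 1}.
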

We remark that, when $\theta=\frac{\pi}{2}$, Theorems \ref{thm 0} and \ref{thm 1} have been proved in \cite{SWX}.
 
The method of our proof follows the classical method for proving geometric inequalities by employing monotonicity properties along and convergence of a suitable curvature flow. We will use the locally constrained inverse type curvature flow with capillary boundary in this paper to achieve the inequality \eqref{af ineq} in Theorem \ref{thm 1}. Such locally constrained flow has been first considered by Guan-Li \cite{GL15}, and used in \cite{BGL, CGLS, GL15, GL18, GLW,  HLW} for closed hypersurfaces in space forms, and in \cite{SWX, WX2} for hypersurfaces with free boundary in balls to prove various geometric inequalities. The Minkowski formulas play an essential role to design these flows. We refer to \cite{BGL, SWX, SX} for more description to such locally constrained flows.
We also refer to \cite{ACW, AW18, ChS, DF, GWW, HL, LWX, MS, WX14}  and references therein for the studying of the Alexandrov-Fenchel type inequalities for closed hypersurfaces in space forms which used various types of curvature flow. 
In addition to the curvature flow approach, there are also many other interesting methods to study the Alexandrov-Fenchel inequalities for closed hypersurfaces, see e.g.  \cite{ CW13,CW14, Qiu, QX19, Tru}. Besides,  we could  establish the Alexandrov-Fenchel inequalities for capillary hypersurfaces in the half-space in a forthcoming paper \cite{WWX21}.

For our purpose, we first generalize the Minkowski formula for hypersufaces with capillary boundary in \cite{WX1, WW} to higher order mean curvatures,
\begin{eqnarray}\label{minkowski sigma_k0}  
			\int_\S  H_{k-1}  \left( \langle x,e\rangle+\cos\theta \langle \nu,e\rangle\right)dA=\int_\S H_k  \langle X_e,\nu\rangle dA,
		\end{eqnarray}
where $e$ is a constant unit vector field in $\RR^{n+1}$ and $X_e$ is defined in \eqref{Xe}, which is a conformal Killing vector field parallel to $\SS^n$.
Then we consider the flow 
 \begin{equation}\label{flow with capillary0}
\left\{ \begin{array}{lll}
&\partial_t x(\cdot, t)  =f(\cdot, t) \nu(\cdot, t)+T(\cdot, t), \quad &
 \hbox{ in }\mathbb{\BB}^n\times[0,T),\\
&\langle \nu(\cdot, t),\overline{N}\circ x(\cdot,t)\rangle  = -\cos\theta 
 \quad & \hbox{ on }\partial \mathbb{\BB}^n\times [0,T),
 \end{array}\right.
 \end{equation}
where $T(\cdot, t)$ is the tangential component of $\p_t x$ and 
$$f=\frac{\langle x+\cos\theta  \nu, e\rangle }{n\sigma_n/\sigma_{n-1}}-\langle X_e,\nu\rangle.$$
By the first variational formula \eqref{fvf0} and \eqref{minkowski sigma_k0}, it is easy to see that
under flow \eqref{flow with capillary0}, $W_{n,\theta}(\widehat{\S})$ is preserved and $ W_{k,\theta}(\widehat{\S}), 1\le k\le n-1$ is non-decreasing.

The remaining task is to prove the convergence of the flow to a spherical cap with capillary boundary. For this aim, we need to make a choice of $e$, which depends only on the initial hypersurface $x(\overline{\BB}^n, 0)=\S_0$, so that $\<x+\cos\theta \nu, e\>>0$ for $\S_0$, due to the strict convexity and its $\theta-$capillary boundary of $\S_0$. This will be done in Proposition \ref{ellipticity}. Under this choice, since we have barriers for the flow hypersurfaces, given by spherical caps around $e$ with $\theta$-capillary boundary, we are able to prove that under the flow, $\<x+\cos\theta \nu, e\>$ is always bounded below by a uniform constant. On the other hand, we can prove the quantity $\<X_e, \nu\>$ is bounded below by a uniform constant, which enable us to write the flow equation to be a scalar equation of graph functions by using M\"obius transformations, cf. \cite{LS, WW, WX2}. Moreover, we prove that the flow preserves the strict convexity and we have the uniform curvature estimates, which implies the long-time existence. Finally, by using the monotonicity of $W_{k,\theta}(\widehat{\S})$ along the flow,  we obtain the convergence of the flow to a spherical cap around $e$.

\vspace{.2cm}
\textbf{This article is structured as follows.} In Section \ref{sect 2}, we give some preliminaries for $\theta$-capillary hypersurfaces, and prove a new Minkowksi type formula \eqref{minkowski sigma_k}. Besides, we introduce the definition of  quermassintegral for hypersurfaces with $\theta$-capillary boundary and derive its first variational formula, i.e. Theorem \ref{thm 0}. In Section \ref{sect 3}, we introduce a locally constrained type curvature flow \eqref{flow with capillary}, derive the evolution equations for various geometric quantities,  and obtain the uniform curvature estimates for convex hypersurfaces with $\theta$-capillary boundary in a ball along this flow, which  follows the long-time existence and convergence of our flow \eqref{flow with capillary}. The last part is devoted to prove the Alexandrov-Fenchel inequalities  for convex hypersurfaces with $\theta$-capillary boundary, i.e. Theorem \ref{thm 1}.%, after obtaining the uniform estimates.

\

\section{Convex hypersurfaces with $\theta$-capillary boundary}\label{sect 2}

\subsection{Notation and preliminaries}\label{sec 2.1}

Let $\S \subset \RR^{n+1}$ be a smooth embedded, orientable hypersurface. % let $\{e_i\}_{i=1}^n$ be the orthonormal frames on the tangent bundle of $\S:=x(\overline{\mathbb{D}})$.
Denote $D$ be the Levi-Civita connection of $\mathbb{R}^{n+1}$ with respect to the Euclidean metric $\delta$, and $\n$ be the Levi-Civita connection on $\S$ with respect to induced metric $g$ from the embedding $x$. We denote $\div, \Delta,\n^2$ be the divergence, Laplacian, Hessian operator on $\S$ respectively.
The second fundamental form of $x$  is given by the Gaussian formula
$$D_{X}Y=\n_{X}Y- h(X,Y)\nu.$$
The Weingarten operator is defined via
$$g(\W(X),Y)=h(X,Y),$$
and the Weingarten equation says that
\begin{eqnarray*}
\bar{\n}_{X}\nu=\W(X).
\end{eqnarray*}
The Gauss-Codazzi equation says that
\begin{eqnarray*}
&&\Rm(X,Y,Z,W)=h(Y,Z)h(X,W)-h(Y, W)h(X,Z),\\
&&(\n_Z h)(X, Y)=(\n_Y h)(X, Z).
\end{eqnarray*}
 where our convention for the Riemannian curvature tensor $\Rm$ is
$$\Rm(X,Y,Z,W)= g(\Rm(X,Y)Z, W)=g(\n_{X}\n_{Y}Z-\n_{Y}\n_{X}Z-\n_{[X,Y]}Z, W).$$
\begin{rem}\label{Rem-coordinates}
We will simplify the notation by using the following shortcuts occasionally:
\begin{enumerate}
\item When dealing with complicated evolution equations of tensors, we will use a local frame to express tensors with the help of their components, i.e. for a tensor field $T\in \mathcal{T}^{k,l}(\S)$, the expression $T^{i_{1}\dots i_{k}}_{j_{1}\dots j_{l}}$ denotes  
\begin{eqnarray*}
T^{i_{1}\dots i_{k}}_{j_{1}\dots j_{l}}=T(e_{j_{1}},\dots,e_{j_{l}},\e^{i_{1}},\dots \e^{i_{k}}),
\end{eqnarray*}
where $(e_{i})$ is a local frame and $(\e^{i})$ its dual coframe. 
\item The coordinate expression for  the $m$-th covariant derivative of a $(k,l)$-tensor field $T$ is 
\begin{eqnarray*}
\n^m T=\left(T^{i_1\dots i_k}_{j_1\dots j_l; j_{l+1}\dots j_{l+m}}\right),
\end{eqnarray*}
where indices appearing after the semi-colon denote the covariant derivatives.
\item We shall use the convention of Einstein summation. For convenience the components of the Weingarten map $\W$ are denoted by $(h^{i}_{j})=(g^{ik}h_{kj})$, and $|h|^2$ be the norm square of the second fundamental form, that is $|h|^2=g^{ik}h_{kl}h_{ij}g^{jl}$, where  $(g^{ij})$ is the inverse of $(g_{ij})$. We use the metric tensor $(g_{ij})$ and its inverse $(g^{ij})$ to  lower down and raise up the indices of tensor fields on $\S$.

\end{enumerate}
\end{rem}

Let $\s_k(\k), 1\leq k\leq n,$ be the $k$-th elementary symmetric polynomial for $\k\in \RR^n$ and $H_k(\k)$ be its normalization $H_k(\k)=\frac{1}{\binom{n}{k}}\s_k(\k)$. Denote by $\sigma_k(\k|i)$ the symmetric function $\sigma_k(\k)$ with $\kappa_i=0$. We shall use the following basic properties in this paper.
\begin{prop}\label{prop2.2}\
	\begin{enumerate}
\item $ \sigma_k(\k)=\sigma_k(\k|i)+\k_i\sigma_{k-1}(\k|i),  \quad  \forall 1\leq i \leq n.$
\item $ \sum\limits_{i = 1}^n {\sigma_{k}(\k|i)}=(n-k)\sigma_k(\k).$
\item $\sum\limits_{i = 1}^n {\k_i \sigma_{k-1}(\k|i)}=k\sigma_k(\k)$.
\item $\sum\limits_{i = 1}^n {\k_i^2 \sigma_{k-1}(\k|i)}=\s_1(\k)\sigma_k(\k)-(k+1)\s_{k+1}(\k)$.
	\end{enumerate}
\end{prop}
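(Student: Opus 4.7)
The plan is to prove each of the four identities by a direct combinatorial inspection of the defining expansion $\sigma_k(\kappa)=\sum_{1\le i_1<\cdots<i_k\le n}\kappa_{i_1}\cdots\kappa_{i_k}$. All four are classical facts about elementary symmetric polynomials (see, e.g., Reilly's variational formulas paper or the appendix of any standard text on $k$-Hessian equations), so in the actual write-up I would likely just cite a reference; the following sketches the underlying arguments.

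For (1), I would split the collection of $k$-element subsets of $\{1,\ldots,n\}$ according to whether the fixed index $i$ belongs to the subset. The subsets containing $i$ contribute $\kappa_i\,\sigma_{k-1}(\kappa|i)$, while those not containing $i$ contribute $\sigma_k(\kappa|i)$, and summing yields (1).

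Identities (2) and (3) then follow by counting with multiplicity. In $\sum_i\sigma_k(\kappa|i)$, a fixed monomial $\kappa_{i_1}\cdots\kappa_{i_k}$ is counted once for every $i\notin\{i_1,\ldots,i_k\}$, hence exactly $n-k$ times, giving (2). In $\sum_i\kappa_i\sigma_{k-1}(\kappa|i)$, the same monomial is counted once for every $i\in\{i_1,\ldots,i_k\}$, hence exactly $k$ times, giving (3). (Alternatively, (2) follows from (1) by summing over $i$ and invoking (3).)

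Finally, for (4) I would multiply (1) by $\kappa_i$ and sum over $i$, obtaining
\[
\sigma_1(\kappa)\,\sigma_k(\kappa)=\sum_{i=1}^n \kappa_i\,\sigma_k(\kappa|i)+\sum_{i=1}^n \kappa_i^2\,\sigma_{k-1}(\kappa|i).
\]
The first term on the right equals $(k+1)\sigma_{k+1}(\kappa)$ by the same counting as in (3) applied at level $k+1$ (namely, $\kappa_i\sigma_k(\kappa|i)$ records precisely the $(k+1)$-subsets containing $i$, and each such subset is counted $k+1$ times). Solving for the remaining term gives (4). I do not expect any real obstacle here; the entire proposition is foundational and purely combinatorial, and its interest lies only in setting up notation for later use.
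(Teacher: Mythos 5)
Your argument is correct. Note that the paper itself does not prove Proposition \ref{prop2.2}; it simply refers the reader to \cite[Chapter XV, Section 4]{Lie} and \cite{Spruck}, so there is no in-text proof to compare against. Your combinatorial reasoning is the standard one and is exactly what those references do: (1) by partitioning the $k$-subsets of $\{1,\dots,n\}$ according to membership of $i$; (2) and (3) by counting, for a fixed monomial, how many values of $i$ produce it ($n-k$ and $k$, respectively); and (4) by multiplying (1) by $\kappa_i$, summing over $i$, and recognizing $\sum_i\kappa_i\sigma_k(\kappa|i)=(k+1)\sigma_{k+1}(\kappa)$ as an instance of (3) at level $k+1$. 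All four steps are sound, and in a write-up it would be entirely appropriate to compress this to a citation as the paper does.
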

\begin{prop}\label{prop2.3}
For $1\le k<l\le n$, for  $\kappa\in \Gamma_+:=\{\kappa\in\mathbb{R}^n:\kappa_i>0, 1\leq i\leq n\}$, we have
\begin{eqnarray}\label{NM-ineq}
H_kH_{l}\ge H_{k-1}H_{l+1},
\end{eqnarray}	
with equality holding if and only if $\kappa=c(1, \cdots, 1)$ for some $c>0$.
\end{prop}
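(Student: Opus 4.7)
The plan is to reduce this Newton-Maclaurin type inequality to the iterated use of the classical Newton inequalities
\begin{eqnarray*}
H_j(\kappa)^2 \geq H_{j-1}(\kappa)\, H_{j+1}(\kappa), \qquad 1 \leq j \leq n-1,
\end{eqnarray*}
which hold for every $\kappa \in \RR^n$, with equality on $\Gamma_+$ precisely when all $\kappa_i$ coincide. First I would record this building block, which is classical: view the polynomial $p(t) := \prod_{i=1}^n (t+\kappa_i) = \sum_{k=0}^n \binom{n}{k} H_k(\kappa)\, t^{n-k}$ and apply Rolle's theorem to derivatives of $p$ (combined with the palindromic substitution $t \mapsto 1/t$) to drop down to the degree-two case, where the inequality is simply the statement that the discriminant of $H_{j-1} s^2 + 2 H_j s + H_{j+1}$ is nonnegative.

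Given Newton's inequality and the positivity of every $H_j(\kappa)$ on $\Gamma_+$, the next step is to rewrite it in the ratio form
\begin{eqnarray*}
\frac{H_j(\kappa)}{H_{j-1}(\kappa)} \;\geq\; \frac{H_{j+1}(\kappa)}{H_j(\kappa)},
\end{eqnarray*}
so that the sequence $\{H_{j+1}/H_j\}_{j=0}^{n-1}$ is nonincreasing on $\Gamma_+$. Chaining this monotonicity from $j=k-1$ through $j=l$ (which is legal precisely because $k \leq l$) yields
\begin{eqnarray*}
\frac{H_k(\kappa)}{H_{k-1}(\kappa)} \;\geq\; \frac{H_{k+1}(\kappa)}{H_k(\kappa)} \;\geq\; \cdots \;\geq\; \frac{H_{l+1}(\kappa)}{H_l(\kappa)}.
\end{eqnarray*}
Comparing the first and last terms and clearing the positive denominator $H_{k-1}(\kappa) H_l(\kappa)$ gives exactly \eqref{NM-ineq}.

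For the equality analysis, if $H_k H_l = H_{k-1} H_{l+1}$ then the outer two ratios in the chain above agree, which forces equality at every intermediate link; in particular $H_k^2 = H_{k-1} H_{k+1}$, and the rigidity clause of Newton's inequality then forces $\kappa_1 = \cdots = \kappa_n$. The only nontrivial point in this plan is the equality case of Newton's inequality, which is the real obstacle; I would handle it by noting that equality in the discriminant step forces the auxiliary quadratic (obtained after iterated differentiation of $p$) to have a double root, and a straightforward induction using Rolle's theorem propagates this back to show that $p$ itself must have all $n$ roots equal.
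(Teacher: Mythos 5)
Your argument is correct, and since the paper gives no proof of Proposition \ref{prop2.3} at all --- it defers to \cite{Lie} and \cite{Spruck} --- there is no in-paper argument to compare against; what you wrote is essentially the standard textbook derivation that those references contain. Reducing \eqref{NM-ineq} to iterated Newton inequalities through the monotone decrease of the ratios $H_{j+1}/H_j$ is exactly the right decomposition, and the chaining from $j=k-1$ through $j=l$ is set up correctly; all the denominators you clear are positive on $\Gamma_+$. Two small points deserve attention. First, the rigidity step is phrased in a misleading direction: ``Rolle's theorem propagates this back'' suggests going upward from a derivative to the original polynomial, but Rolle passes from $p$ to $p'$, not conversely. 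The clean formulation is the contrapositive: if two of the $\kappa_i$ differ, then $p(t)=\prod_i(t+\kappa_i)$ has (at least) two distinct roots, each successive derivative of a polynomial of degree $\ge 2$ with all real roots and two distinct roots again has two distinct roots, the reciprocal-polynomial step preserves this (legitimate since $p(0)\neq 0$ on $\Gamma_+$), and therefore the terminal quadratic has two distinct roots, i.e.\ strictly positive discriminant. This is the induction you are gesturing at, and it does close the gap. Second, a blemish in the proposition itself rather than in your proof: the ``if and only if'' fails for $l=n$, since on $\Gamma_+$ one always has $H_kH_n>0=H_{k-1}H_{n+1}$, so equality never occurs there, not even for $\kappa=c(1,\dots,1)$. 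In the body of the paper the inequality is only ever invoked with $l\le n-1$, so this costs nothing, but it is worth noting that the rigidity clause should be read with $l\le n-1$.
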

\begin{prop}\label{prop2.4}
For $1\le k\le n$, $F(\k)=\frac{\s_k}{\s_{k-1}}(\k)$ is concave in $\Gamma_+$.
\end{prop}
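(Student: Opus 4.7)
The statement that $F(\kappa) = \sigma_k/\sigma_{k-1}$ is concave on the positive cone $\Gamma_+$ is a classical fact in the theory of elementary symmetric functions and fully nonlinear elliptic equations. My plan is to verify it by a direct Hessian computation based on the identities in Proposition \ref{prop2.2} and the Newton--Maclaurin inequalities of Proposition \ref{prop2.3}; since this result is standard, it can equally well be cited.

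Concretely, I would fix $\kappa\in\Gamma_+$ and an arbitrary direction $\xi\in\mathbb{R}^n$. Using $\partial\sigma_k/\partial\kappa_i = \sigma_{k-1}(\kappa|i)$, the quotient rule gives
$$\frac{\partial F}{\partial\kappa_i} = \frac{\sigma_{k-1}(\kappa|i)\,\sigma_{k-1} - \sigma_k\,\sigma_{k-2}(\kappa|i)}{\sigma_{k-1}^2}.$$
Differentiating once more and clearing the common denominator $\sigma_{k-1}^3$, the desired inequality $\sum_{i,j} F_{ij}\xi_i\xi_j \leq 0$ reduces to a polynomial inequality in the restricted symmetric functions $\sigma_l(\kappa|i)$ and $\sigma_l(\kappa|ij)$ with $l\leq k$. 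I would then expand these terms using the recursive identities of Proposition \ref{prop2.2} (applied to the $(n-1)$- and $(n-2)$-tuples obtained by removing one or two of the $\kappa_i$'s) and invoke the Newton--Maclaurin inequalities of Proposition \ref{prop2.3} on those sub-tuples to collect the expression into a manifestly non-positive quadratic form in $\xi$. A conceptually cleaner alternative is to appeal to the G\aa rding theory of hyperbolic polynomials: $\sigma_k$ is hyperbolic in $\kappa$ with respect to $\mathbf{1}=(1,\ldots,1)$ with hyperbolicity cone $\Gamma_k\supset\Gamma_+$, and since $\partial_{\mathbf{1}}\sigma_k = (n-k+1)\sigma_{k-1}$ by Proposition \ref{prop2.2}, the concavity of $\sigma_k/\sigma_{k-1}$ fits into the standard G\aa rding framework.

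The main obstacle is purely combinatorial: for generic $k$, the expansion of the Hessian in the direct approach is lengthy, and recognizing the assembled polynomial in the $\sigma_l(\kappa|ij)$'s as a sum of non-positive terms takes careful bookkeeping, often organized by induction on $k$. Because this proposition is established in many well-known references (see, e.g., work of Reilly, Caffarelli--Nirenberg--Spruck, Lin--Trudinger, Andrews, and Huisken--Sinestrari), my plan is to cite it as a standard fact rather than reproduce the full computation in the body of the paper.
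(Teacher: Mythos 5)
Your plan of citing this as a standard fact is exactly what the paper does: it refers the reader to Lieberman's book and Spruck's lecture notes rather than reproving concavity of $\sigma_k/\sigma_{k-1}$ on $\Gamma_+$. The sketches you give (direct Hessian computation via the identities of Proposition \ref{prop2.2}, or the G\aa rding hyperbolic-polynomial route using $\partial_{\mathbf 1}\sigma_k=(n-k+1)\sigma_{k-1}$) are both sound outlines of the classical proofs, so no gap; this is effectively the same approach as the paper.
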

 For a proof of Propositions \ref{prop2.2} -\ref{prop2.4}, one can refer to \cite[Chapter XV, Section 4]{Lie} and \cite[Lemma 2.10, Theorem 2.11]{Spruck} respectively.

\subsection{Basic properties for $\theta$-capillary hypersurfaces}\

Let $\S\subset \overline{\BB}^{n+1}$ be a smooth, properly embedded, orientable hypersurfaces with $\theta$-capillary boundary, given by the embedding $x: \overline{\BB}^{n}\to \overline{\BB}^{n+1}$.
Let $\mu$ be the unit outward co-normal of $\p \S$ in $\S$ and $\overline{\nu}$ be the unit normal to $\partial\Sigma$ in $\SS^n$ such that $\{\nu,\mu\}$ and $\{\overline{\nu},\overline{N}\circ x\}$ have the same orientation in the normal bundle of $\partial\Sigma\subset\overline{\mathbb{B}}^{n+1}$. From Definition \ref{capillary bdry}, it follows that
%\begin{equation*}\begin{cases}\mu=\sin\theta \overline{N}+\cos\theta \overline{\nu},\\\nu=-\cos\theta \overline{N}+\sin\theta \overline{\nu},\end{cases}\end{equation*}or equivalently,
\begin{equation}\label{co-normal bundle}
	\begin{cases}
		\overline{N}\circ x=\sin\theta \mu-\cos\theta \nu,
		\\
		\overline{\nu}=\cos\theta \mu+\sin\theta \nu.
	\end{cases}
\end{equation}
$\p\S$ can be viewed as a smooth closed hypersurface in $\SS^n$, which bounds $\widehat{\p\S}$ inside $\SS^n$. By our convention, $\bar \nu$ is the unit outward normal of $\p \S$ in $\widehat{ \p\S}\subset \SS^n$. The second fundamental form of $\p \S$ in  $\SS^n$ is given by $$\widehat{h}(X, Y):= -\<\nabla^\SS_X Y, \bar \nu\>= -\<D_X Y, \bar \nu\>, \quad X, Y\in T(\p\S).$$
The second equality holds since $\<\bar \nu, \bar N\circ x\>=0$.
The second fundamental form of $\p \S$ in $\S$ is given by
$$\tilde{h}(X, Y):= -\<\nabla_X Y, \mu\>= -\<D_X Y, \mu\>, \quad X, Y\in T(\p\S).$$
The second equality holds since $\<\nu, \mu\>=0$.
\begin{prop}\label{basic-capillary} Let $\S\subset \overline{\BB}^{n+1}$ be with  $\theta$-capillary boundary. Let $\{e_\a\}_{\a=1}^{n-1}$ be an orthonormal frame of $\p \S$. Then along $\p\S$,
\begin{itemize}
\item[(1)] $\mu$ is a principal direction of $\S$, that is, $h(\mu, e_\a)=0$.
\item[(2)] $h_{\a\b}=\sin\theta \widehat{h}_{\a\b}-\cos\theta \delta_{\a\b}.$
\item[(3)] $\tilde{h}_{\a\b}=\cos\theta \widehat{h}_{\a\b}+\sin\theta \d_{\a\b}= \frac{1}{\sin\theta}\delta_{\a\b}+\cot\theta h_{\a\b}.$
\item[(4)] $h_{\a\b; \mu}=\tilde{h}_{\b\g}(h_{\mu\mu}\delta_{\a\g}-h_{\a\g})$
\end{itemize}
\end{prop}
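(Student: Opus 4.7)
The plan is to establish these four identities in sequence, with each part building on the previous. Throughout, I will use the orthonormal decompositions in \eqref{co-normal bundle}, the fact that $\bar N\circ x = x$ on $\SS^n$ hence $D_X \bar N = X$ for any tangent vector $X$ to $\SS^n$, and the Weingarten formula $D_X\nu = \mathcal W(X)$ on $\Sigma$.

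For part (1), I would differentiate the capillary identity $\<\nu, \bar N\circ x\> = -\cos\theta$ in the tangential direction $e_\alpha \in T(\p\Sigma)$. Since $e_\alpha$ is tangent to both $\Sigma$ and $\SS^n$, one obtains
$$0 = \<D_{e_\alpha}\nu, \bar N\circ x\> + \<\nu, D_{e_\alpha}(\bar N\circ x)\> = \<\mathcal W(e_\alpha), \bar N\circ x\> + \<\nu, e_\alpha\>.$$
The second term vanishes, and inserting $\bar N\circ x = \sin\theta\,\mu - \cos\theta\,\nu$ yields $\sin\theta\, h(\mu, e_\alpha) = 0$, so $h_{\mu\alpha}=0$ since $\theta\in(0,\pi)$.

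For parts (2) and (3), I would invert the linear system \eqref{co-normal bundle} to get $\nu = -\cos\theta\,\bar N\circ x + \sin\theta\,\bar\nu$ and $\mu = \sin\theta\,\bar N\circ x + \cos\theta\,\bar\nu$. Then
$$h_{\alpha\beta} = -\<D_{e_\alpha}e_\beta, \nu\> = \cos\theta\<D_{e_\alpha}e_\beta, \bar N\circ x\> - \sin\theta\<D_{e_\alpha}e_\beta, \bar\nu\>.$$
Using $\<D_{e_\alpha}e_\beta, \bar N\circ x\> = -\<e_\beta, D_{e_\alpha}\bar N\circ x\> = -\delta_{\alpha\beta}$ and the definition of $\widehat h$, one reads off (2). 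The same computation with $\mu$ in place of $\nu$ gives the first equality in (3); solving (2) for $\widehat h_{\alpha\beta}$ and substituting provides the second equality.

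Part (4) will be the main obstacle and requires combining Codazzi with part (1). By the Codazzi equation, $h_{\alpha\beta;\mu} = h_{\mu\beta;\alpha}$. Expanding the covariant derivative,
$$h_{\mu\beta;\alpha} = e_\alpha\bigl(h(\mu, e_\beta)\bigr) - h(\n_{e_\alpha}\mu, e_\beta) - h(\mu, \n_{e_\alpha}e_\beta).$$
By part (1) the first term vanishes on $\p\Sigma$. For the second term, differentiating $|\mu|^2=1$ and $\<\mu,e_\beta\>=0$ shows $\n_{e_\alpha}\mu = \tilde h_{\alpha\gamma}e_\gamma$ (the $\mu$-component is zero, and the tangential components are $\<\n_{e_\alpha}\mu, e_\gamma\> = -\<\mu, \n_{e_\alpha}e_\gamma\> = \tilde h_{\alpha\gamma}$). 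For the third term, the $\mu$-component of $\n_{e_\alpha}e_\beta$ equals $-\tilde h_{\alpha\beta}$ by definition of $\tilde h$, and the remaining tangential components pair with $\mu$ trivially via $h$ thanks to (1). Assembling these,
$$h_{\alpha\beta;\mu} = \tilde h_{\alpha\beta}h_{\mu\mu} - \tilde h_{\alpha\gamma}h_{\gamma\beta}.$$
Finally, from (3) one sees $\tilde h$ and the restriction of $h$ to $T(\p\Sigma)$ differ only by a scalar multiple of the identity, hence they commute; combined with the symmetry of both tensors this rewrites the product as $\tilde h_{\beta\gamma}h_{\alpha\gamma}$ and yields the stated formula $h_{\alpha\beta;\mu} = \tilde h_{\beta\gamma}(h_{\mu\mu}\delta_{\alpha\gamma} - h_{\alpha\gamma})$.
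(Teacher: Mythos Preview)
Your proof is correct and follows essentially the same route as the paper's: parts (2)--(3) are the same computation (the paper decomposes $D_{e_\alpha}e_\beta$ into its $\bar\nu$ and $\bar N$ components and pairs with $\nu,\mu$, which is equivalent to your inversion of \eqref{co-normal bundle}); for (1) the paper simply cites \cite{WX1}, so your direct derivation is a welcome addition; and for (4) both arguments differentiate the identity $h(\mu,e_\cdot)=0$ tangentially and invoke Codazzi. The only cosmetic difference in (4) is that the paper differentiates $h(\mu,e_\alpha)$ in the $e_\beta$-direction, which immediately produces the index pattern $\tilde h_{\beta\gamma}$ and makes your final commutativity/symmetry step unnecessary---you may wish to swap $\alpha\leftrightarrow\beta$ at the outset to streamline that last line.
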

\begin{proof}The first assertion is well-known, cf. \cite[Proposition 2.1]{WX1}.
For (2), we see 
	\begin{eqnarray*}
		h_{\alpha\beta}&=-\langle D_{e_\alpha}e_\beta,\nu\rangle	=\langle \widehat{h}_{\alpha\beta}\overline{\nu}+\delta_{\alpha\beta}\overline{N},\nu\rangle
	=\sin\theta \widehat{h}_{\alpha\beta}-\cos\theta \delta_{\alpha\beta}.
	\end{eqnarray*}
For (3), we see
\begin{eqnarray*}
\tilde{h}_{\a\b}= -\<D_{e_\alpha}e_\beta, \mu\>= \<\widehat{h}_{\alpha\beta}\overline{\nu}+\delta_{\alpha\beta}\overline{N},\mu\>=\cos\theta \widehat{h}_{\a\b}+\sin\theta \d_{\a\b}.
\end{eqnarray*}

For (4), by taking derivative of $h(\mu, e_\a)=0$ with respect to $e_\b$, using the Codazzi equation and the fact (1),  we get
\begin{eqnarray*}
0=e_\b(h(\mu, e_\a))&=&h_{\a \mu; \b}+h(\n_{e_\b}e_\a, \mu)+h(\n_{ e_\b}\mu, e_\a)
\\&=&h_{\a \b; \mu}+ \<\n_{e_\b}e_\a, \mu\>h_{\mu\mu}+\<\n_{e_\b}\mu, e_\g \>h_{\a\g}
\\&=&h_{\a \b; \mu}-\tilde{h}_{\b\g}(h_{\mu\mu}\delta_{\a\g}-h_{\a\g}).
\end{eqnarray*}
\end{proof}
\begin{cor}\label{convexity}
Let $\S\subset \overline{\BB}^{n+1}$ be with  $\theta$-capillary boundary for $\theta\in (0, \frac{\pi}{2}]$. If $\S$ is convex (strictly convex resp.), in the sense $h$ is  nonnegative definite (positive definite resp.), then $\p \S$ is convex (strictly convex resp.) in both $\SS^n$ and $\S$.
\end{cor}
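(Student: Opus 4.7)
The plan is to read off the claim directly from items (2) and (3) of Proposition \ref{basic-capillary}, observing that for $\theta\in(0,\frac{\pi}{2}]$ we have $\sin\theta>0$ and $\cos\theta\ge 0$, so both $\sin\theta$, $\cos\theta$, $\cot\theta$ are nonnegative and $\sin\theta$ is strictly positive. Since $\widehat h$ and $\tilde h$ are expressed as affine combinations of $h|_{T(\p\Sigma)}$ and the induced metric $\delta_{\alpha\beta}$ with coefficients of the correct sign, positivity (or semi-positivity) of $h$ will transfer to positivity (or semi-positivity) of $\widehat h$ and $\tilde h$.

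More concretely, first I would rewrite Proposition \ref{basic-capillary}(2) in the form
\begin{eqnarray*}
\widehat h_{\alpha\beta}=\frac{1}{\sin\theta}\,h_{\alpha\beta}+\frac{\cos\theta}{\sin\theta}\,\delta_{\alpha\beta},
\end{eqnarray*}
valid along $\p\Sigma$. For any tangent vector $X$ to $\p\Sigma$ this gives
\begin{eqnarray*}
\widehat h(X,X)=\frac{1}{\sin\theta}\,h(X,X)+\frac{\cos\theta}{\sin\theta}\,|X|^2.
\end{eqnarray*}
If $h\ge 0$ then both summands are nonnegative, so $\widehat h\ge 0$; if in addition $h>0$, the first summand is strictly positive whenever $X\ne 0$, so $\widehat h>0$. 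This proves the claim for $\widehat h$, i.e.\ $\p\Sigma\subset\SS^n$ is (strictly) convex.

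Next I would invoke Proposition \ref{basic-capillary}(3) in the form
\begin{eqnarray*}
\tilde h_{\alpha\beta}=\frac{1}{\sin\theta}\,\delta_{\alpha\beta}+\cot\theta\,h_{\alpha\beta},
\end{eqnarray*}
and evaluate on $X\ne 0$ to get
\begin{eqnarray*}
\tilde h(X,X)=\frac{1}{\sin\theta}\,|X|^2+\cot\theta\,h(X,X).
\end{eqnarray*}
The first term is strictly positive; for $\theta\in(0,\frac{\pi}{2}]$, $\cot\theta\ge 0$, so the second term is nonnegative when $h\ge 0$ and strictly positive when $h>0$. Thus $\tilde h\ge 0$ (in fact $>0$) in the convex case, and $\tilde h>0$ in the strictly convex case. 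There is no real obstacle here: the only thing to flag is the hypothesis $\theta\le \pi/2$, which is precisely what guarantees the coefficients $\cos\theta/\sin\theta$ and $\cot\theta$ have the right (nonnegative) sign; without it the argument would fail and one would need the opposite inequality on $h$ to preserve convexity.
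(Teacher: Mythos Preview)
Your proof is correct and follows exactly the same approach as the paper: both invoke Proposition~\ref{basic-capillary}(2) and (3) and use that $\sin\theta>0$, $\cos\theta\ge 0$ for $\theta\in(0,\pi/2]$ to conclude that $\widehat h$ and $\tilde h$ inherit (strict) positivity from $h$. You have simply written out the quadratic-form computation that the paper leaves implicit.
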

\begin{proof} It follows directly from Proposition \ref{basic-capillary} (2) and (3) that $\widehat{h}$ and $\tilde{h}$ are nonnegative definite (positive definite resp.) provided $h$ is so.\end{proof}

\subsection{Minkowski formula}\

For a constant unit vector field $e\in \RR^{n+1}$, denote \begin{eqnarray}\label{Xe}
X_e:=\langle x,e\rangle x-\frac{1}{2}(|x|^2+1)e.
\end{eqnarray}

It has been observed in \cite[Proposition 3.1]{WX1} that $X_e$ is a conformal Killing vector field such that
\begin{eqnarray}\label{conformal-K}
\mathcal{L}_{X_e}\delta= \<x, e\>\delta,
\end{eqnarray}
and $$\<X_e, \bar N\>=0 \hbox{ along }\SS^n.$$
By using such property of $X_e$, the following Minkowski type formula has been proved in \cite[Proposition 3.2]{WX1}: for a properly embedded hypersurface in $\overline{\BB}^{n+1}$ with $\theta$-capillary boundary, 
\begin{eqnarray}\label{mink-1}
	n\int_\S \big(\langle x,e\rangle+  \cos\theta \langle \nu,e\rangle \big)dA =\int_\S H\langle X_e,\nu\rangle dA.
\end{eqnarray}
\begin{rem}
From a direct calculation, we see that $C_{\theta,r}(e)$ satisfies
\begin{eqnarray}\label{static-eq}
	\langle x,e\rangle+  \cos\theta \langle \nu,e\rangle =\frac{1}{r}\langle X_e,\nu\rangle.
\end{eqnarray}
\end{rem}
For our purpose, we generalize \eqref{mink-1} to higher order Minkowski type formulas.
\begin{prop}\label{minkowski formulae 1}Let $\S\subset \overline{\BB}^{n+1}$ be a smooth, properly embedded, orientable hypersurfaces with $\theta$-capillary boundary, given by the embedding $x: \overline{\BB}^{n}\to \overline{\BB}^{n+1}$.
For $1\leq k\leq n$, it holds
		\begin{eqnarray}\label{minkowski sigma_k}
			\int_\S  H_{k-1}  \left( \langle x,e\rangle+\cos\theta \langle \nu,e\rangle\right)dA=\int_\S H_k  \langle X_e,\nu\rangle dA.
		\end{eqnarray}
\end{prop}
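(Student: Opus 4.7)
The plan is to construct a tangential vector field $W$ on $\Sigma$ whose divergence equals (a constant multiple of) the desired integrand $H_{k-1}\langle x+\cos\theta\,\nu,e\rangle-H_k\langle X_e,\nu\rangle$, and whose boundary flux through $\partial\Sigma$ vanishes. After some trial I would take
\begin{equation*}
W=T_{k-1}X_e^T-\frac{(n-k+1)\cos\theta}{k-1}\,T_{k-2}e^T,\qquad k\ge 2,
\end{equation*}
where $T_j$ denotes the $j$-th Newton transformation of $\Sigma$; the case $k=1$ is precisely the known formula \eqref{mink-1} from \cite{WX1}.

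The bulk divergence rests on three ingredients: the Newton tensors $T_j$ are symmetric and divergence-free (by Codazzi in the Euclidean ambient) with $T_j^{ij}g_{ij}=(n-j)\sigma_j$ and $T_j^{ij}h_{ij}=(j+1)\sigma_{j+1}$; the conformal Killing identity $\mathcal{L}_{X_e}\delta=2\langle x,e\rangle\delta$; and $D_U e=0$ for the constant field $e$. Splitting $X_e=X_e^T+\langle X_e,\nu\rangle\nu$ and similarly for $e$, then contracting the symmetric part of $\nabla X_e^T$ (resp.\ $\nabla e^T$) with $T_{k-1}^{ij}$ (resp.\ $T_{k-2}^{ij}$) yields
\begin{equation*}
\dive_\Sigma(T_{k-1}X_e^T)=(n-k+1)\sigma_{k-1}\langle x,e\rangle-k\sigma_k\langle X_e,\nu\rangle,
\end{equation*}
\begin{equation*}
\dive_\Sigma(T_{k-2}e^T)=-(k-1)\sigma_{k-1}\langle e,\nu\rangle.
\end{equation*}
Combining these and using $(n-k+1)\binom{n}{k-1}=k\binom{n}{k}$ to pass from $\sigma_j$ to $H_j=\sigma_j/\binom{n}{j}$, one checks that $\dive_\Sigma W$ is exactly $k\binom{n}{k}$ times the integrand of \eqref{minkowski sigma_k}.

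The crucial step is then to verify the boundary vanishing $\int_{\partial\Sigma}\langle W,\mu\rangle\,ds=0$. By Proposition \ref{basic-capillary}(1), $\mu$ is a principal direction of $\Sigma$ along $\partial\Sigma$, hence an eigenvector of $T_j$ with eigenvalue $\sigma_j(\kappa|\mu)$. Using the capillary decomposition $\mu=\sin\theta\,\bar N+\cos\theta\,\bar\nu$ together with $\bar N=x$ and $\langle X_e,\bar N\rangle=0$ on $\mathbb{S}^n$, one computes $\langle X_e,\mu\rangle=-\cos\theta\langle e,\bar\nu\rangle$ and $\langle e,\mu\rangle=\sin\theta\langle e,x\rangle+\cos\theta\langle e,\bar\nu\rangle$ along $\partial\Sigma$. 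Expanding $\sigma_l(\kappa|\mu)$ via Proposition \ref{basic-capillary}(2) in terms of the symmetric functions $\widehat\sigma_j$ of the principal curvatures $\widehat\kappa$ of $\partial\Sigma\subset\mathbb{S}^n$, and applying the Hsiung-type identity on the closed hypersurface $\partial\Sigma\subset\mathbb{S}^n$,
\begin{equation*}
j\int_{\partial\Sigma}\widehat\sigma_j\langle e,\bar\nu\rangle\,ds+(n-j)\int_{\partial\Sigma}\widehat\sigma_{j-1}\langle e,x\rangle\,ds=0\qquad(j\ge 1),
\end{equation*}
which follows from the fact that $e-\langle e,x\rangle x$ is a conformal Killing field on $\mathbb{S}^n$, the two boundary contributions cancel by the combinatorial identity $(n-k+1)\binom{n-1-j}{k-2-j}=(n-1-j)\binom{n-2-j}{k-2-j}$. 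The main obstacle is guessing the correct coefficient $\frac{(n-k+1)\cos\theta}{k-1}$ of $T_{k-2}e^T$ in $W$; once this is fixed, the combinatorial bookkeeping goes through uniformly in $\theta\in(0,\pi/2]$, and in the free boundary case $\theta=\pi/2$ the whole boundary integrand vanishes trivially, recovering the result of \cite{SWX}.
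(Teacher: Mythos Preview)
Your argument is correct but takes a more laborious route to the boundary cancellation than the paper does. Both proofs contract the conformal Killing identity for $X_e$ against the Newton tensor $T_{k-1}$ to produce the $H_{k-1}\langle x,e\rangle-H_k\langle X_e,\nu\rangle$ part of the integrand; the difference lies in how one manufactures the missing $\cos\theta\,H_{k-1}\langle\nu,e\rangle$ term. You use $-\frac{(n-k+1)\cos\theta}{k-1}T_{k-2}e^T$, whose divergence indeed supplies the right bulk contribution, but whose boundary flux $\langle W,\mu\rangle$ does not vanish pointwise: you must expand $\sigma_{k-1}(\kappa|\mu)$ and $\sigma_{k-2}(\kappa|\mu)$ via Proposition~\ref{basic-capillary}(2), invoke the Hsiung--Minkowski identities on the closed hypersurface $\partial\Sigma\subset\mathbb{S}^n$, and carry out a nontrivial combinatorial matching (which I checked does work). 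The paper instead introduces the auxiliary field
\[
P_e:=\langle\nu,e\rangle\,x-\langle x,\nu\rangle\,e,
\]
which is already tangential to $\Sigma$ and whose symmetric derivative, contracted with $T_{k-1}$, gives exactly $(n-k+1)\sigma_{k-1}\langle\nu,e\rangle$. The payoff is that on $\partial\Sigma$ one has $\langle X_e,\mu\rangle=-\cos\theta\langle e,\bar\nu\rangle$ and $\langle P_e,\mu\rangle=\langle e,\bar\nu\rangle$, so $\langle X_e+\cos\theta P_e,\mu\rangle=0$ \emph{pointwise}; the boundary integrand is then $\sigma_{k-1}^{\mu\mu}\cdot 0$ and no spherical Minkowski identities or combinatorics are needed. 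This also handles all $k\ge 1$ uniformly, without special--casing $k=1$. Your approach has the mild conceptual advantage of using only the constant field $e$ rather than the somewhat ad hoc $P_e$, but the price is the extra layer of integral identities and binomial bookkeeping on the boundary.
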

\begin{proof}Let $\{e_i\}_{i=1}^n$ be an orthonormal frame on $\S$.
Define $$P_e:=\langle \nu,e\rangle x-\langle x,\nu\rangle e.$$ Let $X_e^T$ and $P_e^T$ be the tangential projection of $X_e$ and $P_e$ on $\Sigma$ respectively.
From \eqref{conformal-K}, we have
\begin{eqnarray}\label{x a}
\frac12(\n_{i}(X_e^T)_j+\n_{j}(X_e^T)_i)=  \<x, e\>g_{ij}-h_{ij}\langle X_e,\nu\rangle.
\end{eqnarray}
On the other hand, since $\<P_e, \nu\>=0$, by a direct computation, we know 
	\begin{eqnarray}\label{p a}
	 \nabla_{i} (P_e^T)_{j} =D_{i} (P_e)_{j}=\langle \nu,e \rangle g_{ij}+h_{il}\langle e,e_l\rangle \langle x,e_j\rangle-h_{il}\langle x,e_l\rangle \langle e,e_j\rangle . 
\end{eqnarray}
By multiplying $\sigma_{k-1}^{ij} $ into above equations \eqref{x a} and \eqref{p a}, and combining with  Proposition \ref{prop2.2}, it follows that
	\begin{eqnarray*}
 &&\sigma^{ij}_{k-1}\n_i (X_e^T+\cos\theta P_e^T)_j %&=\sigma_{k-1}^{ij}\left(\langle x,a\rangle g_{ij}+\cos\theta\langle \nu,a\rangle g_{ij} -h_{ij}\langle X_a,\nu\rangle \right)
=(n-k+1)\sigma_{k-1}\left( \langle x,e\rangle+\cos\theta \langle \nu,e\rangle\right)  -k\sigma_k  \langle X_e,\nu\rangle.
\end{eqnarray*}
We need to show
\begin{eqnarray*}
 &&\int_\S \sigma^{ij}_{k-1}\n_i (X_e^T+\cos\theta P_e^T)_j dA=0,
\end{eqnarray*}
In fact, by integration by parts, and using the fact that $\n_i\sigma^{ij}_{k-1}=0$ and $\mu$ is a principal direction of $\S$ along   $\p \S$, we get
\begin{eqnarray*}
 &&\int_\S \sigma^{ij}_{k-1}\n_i (X_e^T+\cos\theta P_e^T)_j dA= \int_{\p \S} \sigma_{k-1}^{\mu\mu} \<X_e+\cos\theta P_e,\mu\> ds.
\end{eqnarray*}
From \eqref{co-normal bundle}, on $\p\S$, we see that 
\begin{eqnarray*}
\<X_e,\mu\>&=&\<X_e,\sin\theta \overline{N}+\cos\theta \overline{\nu}\>=\cos\theta \<X_e,\overline{\nu}\>
\\&=&\cos\theta \left( \<x,e\>\<x,\overline{\nu}\>-\<e,\overline{\nu}\>\right)=-\cos\theta \<e,\overline{\nu}\>,
\end{eqnarray*}and
\begin{eqnarray*}
\<P_e,\mu\>=\<\nu,e\>\<x,\mu\>-\<x,\nu\>\<e,\mu\>=\<\sin \theta \nu+\cos\theta \mu,e\>=\<\overline{\nu},e\>,
\end{eqnarray*}which follows that
$$\<X_e+\cos\theta P_e,\mu\>=0.$$ We complete the proof.
\end{proof}

\subsection{Quermassintegrals and first variational formula}\label{sec 2.3}\

Recall that given a convex body $K\subset \mathbb{S}^n$ with non-empty smooth boundary $\p K$, its quermassintegrals $W^{\mathbb{S}^n}_k$  are defined by
\begin{eqnarray*}
	 W_0^{\mathbb{S}^n}(K):=|K|,\quad \quad   W_1^{\mathbb{S}^n}(K):=  \frac{|\p K|}{n} ,
\end{eqnarray*}
and for $2\leq k\leq n$, 
\begin{eqnarray*}
	 W_k^{\mathbb{S}^n}(K)&:=\frac{1}{n} \int_{\p K} H_{k-1}^{\mathbb{S}^n} dA+\frac{k-1}{n-k+2}  W^{\mathbb{S}^n}_{k-2}(K),
\end{eqnarray*}where 
$H_k^{\mathbb{S}^n}=\frac{1}{\binom{n-1}{k}}\sigma_k^{\mathbb{S}^n}$ and $\sigma_k^{\mathbb{S}^n}$ denote the $k$-th elementary symmetric polynomials, which evaluated at the $(n-1)$-principal curvatures of the hypersurface $\p K\subset \mathbb{S}^n$, and we use the convention that $H_0^{\mathbb{S}^n}=1$, $H_n^{\mathbb{S}^n}=0$.
In particular, 
$$W_n^{\mathbb{S}^n}(K)=\frac{\o_{n-1}}{n},$$ due to the spherical Gauss-Bonnet-Chern's Theorem, cf. \cite{Sol}.
%The first variation of $W_j^{\mathbb{S}^n}$ along normal direction with speed function $f$ are given by (see \cite[Section 2.1]{SWX} for example)where $\sigma_1^{\mathbb{S}^n}(\kappa)$ is the mean curvature of $\p K\subset \mathbb{S}^n$.
The following first variational formula of $W^{\mathbb{S}^n}_k$ is proved by Reilly \cite[Section 3]{Reilly}, see also Barbosa-Colares \cite[Section 4]{BdC}. 
\begin{prop}\label{fvf-sphere}Consider a family of convex bodies  $\{K_t\}$ whose boundary $\p K_t$ evolving by a normal speed function $f$, then for $0\leq k\leq n$, 
\begin{eqnarray}\label{1st var on sphere}
	\frac{d}{dt} W_k^{\mathbb{S}^n}(K_t)= \frac{1}{\binom{n}{k}}\int_{\p K_t} \sigma_k^{\mathbb{S}^n} f dA_t=\frac{n-k}{n} \int_{\p K_t}  H_k^{\mathbb{S}^n} f dA_t.
\end{eqnarray}
\end{prop}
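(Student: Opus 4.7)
The plan is to prove the proposition by induction on $k$, exploiting the recursive definition of the spherical quermassintegrals. For the base case $k=0$, the identity reduces to $\frac{d}{dt}|K_t|=\int_{\p K_t}f\,dA_t$, the classical first variation of volume. For $k=1$, the identity $\frac{d}{dt}\frac{|\p K_t|}{n}=\frac{1}{n}\int_{\p K_t}\sigma_1^{\SS^n}f\,dA_t$ is the classical first variation of area for a closed hypersurface in $\SS^n$, with the mean curvature as the Euler-Lagrange operator.

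For the inductive step with $k\ge 2$, I would differentiate the defining relation
\[
W_k^{\SS^n}(K_t)=\frac{1}{n}\int_{\p K_t}H_{k-1}^{\SS^n}\,dA_t+\frac{k-1}{n-k+2}W_{k-2}^{\SS^n}(K_t).
\]
The second summand is handled directly by the inductive hypothesis at level $k-2$. The main content is the ambient evolution formula
\[
\frac{d}{dt}\int_{\p K_t}\sigma_{k-1}^{\SS^n}\,dA_t=\int_{\p K_t}\bigl[k\sigma_{k}^{\SS^n}-(n-k+1)\sigma_{k-2}^{\SS^n}\bigr]f\,dA_t,
\]
which I would derive from the standard evolution equations in a space form of sectional curvature $1$, namely $\p_t g_{ij}=2f h_{ij}$ and $\p_t h^i_j=-\n^i\n_j f-f h^i_k h^k_j-f\delta^i_j$, with the last term being the contribution of the ambient Ricci curvature. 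Contracting with the Newton tensor $\sigma_{k-1}^{ij}$, applying the identities $\sigma_{k-1}^{ij}h^j_l h^l_i=\sigma_1\sigma_{k-1}-k\sigma_k$ and $\sum_i\sigma_{k-1}^{ii}=(n-k+1)\sigma_{k-2}$, and integrating the $\sigma_{k-1}^{ij}\n_j\n_i f$ piece by parts using the Codazzi-type divergence-freeness $\n_j\sigma_{k-1}^{ij}=0$ (there are no boundary terms since $\p K_t\subset\SS^n$ is closed) yields the stated identity after the $\sigma_1\sigma_{k-1}$ pieces cancel against the area-evolution contribution $\sigma_{k-1}\cdot f\sigma_1$.

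Passing to normalized form via $H_k^{\SS^n}=\binom{n-1}{k}^{-1}\sigma_k^{\SS^n}$ and the binomial ratios $k\binom{n-1}{k}/\binom{n-1}{k-1}=n-k$ and $(n-k+1)\binom{n-1}{k-2}/\binom{n-1}{k-1}=k-1$, I would obtain
\[
\frac{1}{n}\frac{d}{dt}\int_{\p K_t}H_{k-1}^{\SS^n}\,dA_t=\frac{n-k}{n}\int_{\p K_t}H_k^{\SS^n}f\,dA_t-\frac{k-1}{n}\int_{\p K_t}H_{k-2}^{\SS^n}f\,dA_t.
\]
Combined with the inductive hypothesis $\frac{d}{dt}W_{k-2}^{\SS^n}(K_t)=\frac{n-k+2}{n}\int_{\p K_t}H_{k-2}^{\SS^n}f\,dA_t$, the unwanted $H_{k-2}^{\SS^n}$ contributions cancel exactly, giving the claim. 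The main obstacle I anticipate is careful bookkeeping of the $-f\delta^i_j$ ambient-curvature correction in the evolution of the Weingarten tensor: this spherical correction, absent in the Euclidean case, is precisely what forces the coefficient $\frac{k-1}{n-k+2}$ in the recursive definition of $W_k^{\SS^n}$ and is what makes the cancellation go through.
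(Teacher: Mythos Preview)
The paper does not supply its own proof of this proposition; it simply cites Reilly \cite[Section 3]{Reilly} and Barbosa--Colares \cite[Section 4]{BdC}. Your inductive argument is correct and is essentially the standard proof given in those references: the key step is the variation formula for $\int_{\p K_t}\sigma_{k-1}^{\SS^n}\,dA_t$ in the curvature-$+1$ ambient, where the extra $-f\delta^i_j$ term in the evolution of the Weingarten operator generates the $-(n-k+1)\sigma_{k-2}^{\SS^n}f$ contribution, and this is exactly absorbed by the recursion coefficient $\frac{k-1}{n-k+2}$ via the inductive hypothesis at level $k-2$. Your binomial bookkeeping and the use of the divergence-freeness of the Newton tensor (valid because Codazzi holds in space forms) are both correct.
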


\vspace{.4cm}
Next we define the geometric quantities ${W}_{k,\theta}$ for smooth, properly embedded, convex hypersurfaces in $\overline{\BB}^{n+1}$ with $\theta$-capillary boundary. 
Let $\S\subset \overline{\BB}^{n+1}$ be such a hypersurface.
 Denote $\widehat{\S}$ be the enclosed bounded convex domain by $\Sigma$ inside  $\mathbb{B}^{n+1}$ and  $\widehat{\p\S}\subset \mathbb{S}^n$ be the enclosed convex domain by $\p\S\subset \mathbb{S}^n$. We define the $ W_{k,\theta}$ for $\widehat{\S}\subset \overline{\BB}^{n+1}$ as in \eqref{relative quer 1}, \eqref{relative quer n} and \eqref{relative quer k}. 
 In particular, 
\begin{eqnarray*} 
	 W_{2,\theta}(\widehat{\S})=\frac{1}{n(n+1)}\Big\{ \int_\S H dA- \sin\theta\cos\theta|\p\S|+\big[1+(n-1)\cos^2\theta\big]|\widehat{\p\S}| \Big\}.
\end{eqnarray*}
We have the following first variational formula for $ W_{k,\theta}$.
   \begin{thm}\label{fvf}
 	Let $\S_t\subset \overline{\BB}^{n+1}$ be a family of smooth, properly embedded hypersurfaces with $\theta$-capillary boundary, given by $x(\cdot, t): \overline{\BB}^{n}\to \overline{\BB}^{n+1}$, such that 
	\begin{eqnarray}\label{normal flow}
(\p_t x)^\perp=f\nu, 
\end{eqnarray}
for some normal speed function $f$.  Then for $0\leq k\leq n-1$,
 	\begin{eqnarray}\label{fvf-1}
 		\frac{d}{dt} W_{k+1,\theta}(\widehat{\Sigma_t})=\frac{n-k}{n+1}\int_{\S_t} H_{k+1} f dA_t,
 	\end{eqnarray} and 
 \begin{eqnarray}\label{fvf n+1}
	\frac{d}{dt} W_{n+1,\theta}(\widehat{\Sigma_t})=0.
 \end{eqnarray}
 \end{thm}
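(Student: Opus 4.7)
The plan is to differentiate each piece in the definition of $W_{k+1,\theta}$ and check that, thanks to the capillary condition and the precise combinatorial coefficients in \eqref{relative quer k}, all boundary contributions cancel and leave only $\frac{n-k}{n+1}\int_\Sigma H_{k+1}f\,dA$.

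I first pin down the boundary kinematics. Decompose $\partial_t x = f\nu + T$ with $T$ tangent to $\Sigma$. The requirement that $\partial\Sigma_t$ stay on $\mathbb{S}^n$, combined with $\overline{N} = \sin\theta\,\mu - \cos\theta\,\nu$ from \eqref{co-normal bundle}, forces $\langle T,\mu\rangle = f\cot\theta$ on $\partial\Sigma$; hence $\partial\Sigma_t$ moves in $\mathbb{S}^n$ along $\overline{\nu}=\cos\theta\,\mu+\sin\theta\,\nu$ with outward normal speed $f/\sin\theta$. Differentiating the capillary condition $\langle\nu,\overline{N}\rangle=-\cos\theta$ in time, using $\partial_t\nu = -\nabla f + \mathcal{W}(T)$ and $h(\mu,e_\alpha)=0$ from Proposition \ref{basic-capillary}(1), yields the Robin-type identity
$$\nabla_\mu f = h_{\mu\mu}\,f\cot\theta + \frac{f}{\sin\theta}\quad\text{on }\partial\Sigma,$$
which is what ultimately makes the variation depend only on $f|_{\partial\Sigma}$ rather than on $\nabla_\mu f$. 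The base case $k=0$ is then immediate: $\partial_t|\Sigma|=\int_\Sigma\sigma_1 f\,dA+\int_{\partial\Sigma}f\cot\theta\,ds$ and $\partial_t|\widehat{\partial\Sigma}|=\int_{\partial\Sigma}(f/\sin\theta)\,ds$ combine in $\partial_t W_{1,\theta}=\frac{1}{n+1}\bigl(\partial_t|\Sigma|-\cos\theta\,\partial_t|\widehat{\partial\Sigma}|\bigr)$ so that the boundary terms cancel, leaving $\frac{n}{n+1}\int_\Sigma H_1 f\,dA$.

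For general $1\le k\le n-1$, I compute $\partial_t\int_\Sigma\sigma_k\,dA$ using $\partial_t dA=\sigma_1 f\,dA$, the evolution of $h_i^j$ under $\partial_t x = f\nu$, the Newton--MacLaurin identities of Proposition \ref{prop2.2}, and integration by parts against the divergence-free Newton tensor $\sigma_k^{ij}$. The block structure from $h(\mu,e_\alpha)=0$ gives $\sigma_k^{\mu j}\nabla_j f=\sigma_{k-1}^\partial\nabla_\mu f$, where $\sigma_j^\partial$ denotes the $j$-th elementary symmetric polynomial in the eigenvalues of $h_{\alpha\beta}$; adding the boundary contribution $\int_{\partial\Sigma}\sigma_k\langle T,\mu\rangle\,ds = \int_{\partial\Sigma}\sigma_k f\cot\theta\,ds$ coming from the tangential part of the flow, substituting the Robin identity, and using $\sigma_k = \sigma_k^\partial + h_{\mu\mu}\sigma_{k-1}^\partial$ (Proposition \ref{prop2.2}(i)) collapses everything to
$$\partial_t\int_\Sigma H_k\,dA=(n-k)\int_\Sigma H_{k+1}f\,dA+\frac{1}{\binom{n}{k}\sin\theta}\int_{\partial\Sigma}f\bigl(\sigma_k^\partial\cos\theta-\sigma_{k-1}^\partial\bigr)\,ds.$$
For the spherical pieces, Proposition \ref{fvf-sphere} together with the normal speed $f/\sin\theta$ yields $\partial_t W_l^{\mathbb{S}^n}(\widehat{\partial\Sigma_t})=\frac{1}{\binom{n}{l}\sin\theta}\int_{\partial\Sigma}\sigma_l^{\mathbb{S}^n}f\,ds$, and Proposition \ref{basic-capillary}(2) together with the polynomial identity $\sigma_l(A+cI)=\sum_{j=0}^l\binom{n-1-j}{l-j}c^{l-j}\sigma_j(A)$ for an $(n-1)\times(n-1)$ symmetric $A$ gives
$$\sigma_l^{\mathbb{S}^n}=\sin^{-l}\theta\sum_{j=0}^l\binom{n-1-j}{l-j}\cos^{l-j}\theta\,\sigma_j^\partial.$$

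It remains to plug these expressions into $\partial_t W_{k+1,\theta}$, regroup the boundary integrals by the index $j$ of $\sigma_j^\partial$, and verify that the coefficient of each $\int_{\partial\Sigma}\sigma_j^\partial f\,ds$ vanishes. The combinatorial coefficients $\frac{(-1)^{k+l}}{n-l}\binom{k}{l}\bigl[(n-k)\cos^2\theta+(k-l)\bigr]\cos^{k-1-l}\theta\sin^l\theta$ in \eqref{relative quer k} are tailored for exactly this cancellation, and \eqref{fvf n+1} is handled identically, together with the fact that $H_{n+1}\equiv 0$ kills the bulk term. I expect this combinatorial bookkeeping to be the main obstacle of the proof; rather than grinding through the algebra directly, the cleanest route is induction on $k$, where the $k=0$ base case is as above and at each step isolating the top $\sigma_k^\partial$-contribution in the boundary integrals uniquely determines the next coefficient in \eqref{relative quer k}, so that the residual cancellation is supplied by the inductive hypothesis.
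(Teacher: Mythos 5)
Your approach is essentially the paper's: both proofs compute $\frac{d}{dt}\int_{\Sigma_t}\sigma_k\,dA_t$ via Proposition \ref{basic evolution eq} and the divergence-free Newton tensor, use the tangential compatibility $\langle T,\mu\rangle=f\cot\theta$ and the Robin identity $\nabla_\mu f=(\tfrac{1}{\sin\theta}+\cot\theta\,h_{\mu\mu})f$ from Proposition \ref{robin bdry of speed1} to reduce the boundary term to $\frac{f}{\sin\theta}(\cos\theta\,\sigma_k^\partial-\sigma_{k-1}^\partial)$, invoke Proposition \ref{fvf-sphere} with normal speed $f/\sin\theta$, and then translate between the symmetric functions of $h|_{\partial\Sigma}$ and of $\widehat{h}$ via the binomial expansion of $\sigma_k$ under $A\mapsto A+cI$. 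The only organizational difference is direction: the paper expands $\sigma_k^\partial=\sigma_k(\sin\theta\,\widehat{h}-\cos\theta\,I)$ into the $\sigma_l^{\mathbb{S}^n}$ basis and then reads off exactly the coefficients appearing in \eqref{relative quer k}, whereas you expand $\sigma_l^{\mathbb{S}^n}$ into the $\sigma_j^\partial$ basis and propose to check cancellation $j$-by-$j$. Both directions are equivalent, but you do not actually carry out the final coefficient verification (nor the $k=n$ case beyond a remark) — you defer it to a sketched induction — while the paper performs the binomial bookkeeping explicitly using $\binom{n}{l}\binom{n-l-1}{n-k-1}=\binom{n}{k}\binom{k}{l}\frac{n-k}{n-l}$ and $\binom{n}{l}\binom{n-l-1}{n-k}=\binom{n}{k}\binom{k}{l}\frac{k-l}{n-l}$. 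That step is the crux of why \eqref{relative quer k} is correct, so a complete proof would need it written out; everything else in your outline is sound and matches the paper.
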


Before proving Theorem \ref{fvf}, we study the evolution equations for several geometric quantities under the flow
\begin{eqnarray}\label{flow with normal and tangential}
	\p_t x=f\nu+T,
\end{eqnarray}where $T\in T\Sigma_t$. 
%The crucial part of the paper is to derive a priori uniform estimates of solution to \eqref{flow with capillary}, which is time-independent.  Before that, 
%we firstly  prove the following evolution equations for \eqref{flow with normal and tangential}.
\begin{prop}\label{basic evolution eq}
Along the general flow \eqref{flow with normal and tangential}, it holds that
\begin{enumerate} 
	\item $\p_t g_{ij}=2fh_{ij}+\n_i T_j+\n_jT_i$.
	\item $\p_tdA_t =\left(fH+\div(T)\right)dA_t.$ 
	\item $\p_t\nu =-\n f+h(e_i,T)e_i$.	
	\item $\p_t h_{ij}=-\n^2_{ij}f +fh_{ik}h_{j}^k +\n_T h_{ij}+h_{j}^k\n_iT_k+h_{i}^k\n_j T_k.$
	\item $\p_t h^i_j=-\n^i\n_{j}f -fh_{j}^kh^{i}_k+\n_T h^i_j.$
	\item $\p_t H=-\Delta f-|h|^2 f+ \n_T H$.
	\item  $\p_t F= -F^{j}_i\n^i\n_{j}f -fF^{j}_ih_{j}^kh^{i}_k+\n_TF$, for $F=F(h^j_i)$, where $F^i_j:=\frac{\p F}{\p h^j_i}$.
	\item $\p_t \s_k=- \frac{\p \s_k}{\p h_i^j}\n^i\n_{j}f - f (\s_1\s_k-(k+1)\s_{k+1})+\n_T \s_k$. 
\end{enumerate}
\end{prop}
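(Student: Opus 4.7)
The plan is to derive the eight formulas in order, by direct differentiation on the parameter space $\overline{\BB}^n$ combined with the Gauss--Weingarten decompositions $\p_i\p_j x=\Gamma^k_{ij}\p_k x-h_{ij}\nu$ and $\p_i\nu=h_i^{\,k}\p_k x$, together with the ambient decomposition $D_iT=\n_iT-h(e_i,T)\nu$ valid for any tangent vector field $T$. Since $\p_t$ commutes with parameter derivatives, one writes $\p_t\p_i x=\p_i(f\nu+T)$ and splits into normal and tangential parts. All computations are cleanest at a chosen point in Riemannian normal coordinates, where $\Gamma^k_{ij}=0$ so that partial and covariant derivatives of tensor components agree.

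For (1), differentiating $g_{ij}=\langle\p_i x,\p_j x\rangle$ and substituting the decompositions gives the $2fh_{ij}$ piece from the normal component of $\p_i(f\nu)$ paired with $\p_j x$, together with $\n_iT_j+\n_jT_i$ after symmetrizing the tangential pieces of $\p_iT$ and $\p_jT$. Then (2) is immediate from $\p_t dA=\tfrac12 g^{ij}(\p_tg_{ij})\,dA$ together with $g^{ij}h_{ij}=H$ and $g^{ij}\n_iT_j=\dive(T)$. For (3), since $|\nu|=1$ the time-derivative $\p_t\nu$ is tangent, and its components are fixed by differentiating $\langle\nu,\p_i x\rangle=0$ to obtain $\langle\p_t\nu,\p_i x\rangle=-\p_if+h(e_i,T)$; raising the index produces the stated expression.

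The main computation is (4). From $h_{ij}=-\langle\p_i\p_j x,\nu\rangle$ one has $\p_th_{ij}=-\langle\p_i\p_j(f\nu+T),\nu\rangle-\langle\p_i\p_j x,\p_t\nu\rangle$. At a normal-coordinate point the second pairing vanishes, since $\p_t\nu$ is tangent while $\p_i\p_j x\in\R\nu$ there. The normal-speed contribution expands to $-\n^2_{ij}f+fh_i^{\,k}h_{jk}$ using $\langle\p_i\p_j\nu,\nu\rangle=-\langle\p_j\nu,\p_i\nu\rangle=-h_i^{\,k}h_{jk}$. The tangential-speed contribution is extracted by differentiating $\langle\p_jT,\nu\rangle=-h(e_j,T)$, producing $-T^a\n_ih_{ja}-h_{ja}\n_iT^a-h_{ia}\n_jT^a$; the Codazzi identity $\n_ih_{ja}=\n_ah_{ij}$ converts the first of these into $-\n_Th_{ij}$, and collecting signs yields (4).

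Steps (5)--(8) are algebraic consequences. Step (5) follows from differentiating $h_j^i=g^{ik}h_{kj}$: combining $\p_tg^{ik}=-g^{ip}g^{kq}\p_tg_{pq}$ with (1) and (4), the several tangential cross-terms telescope after raising/lowering indices into the single $\n_Th_j^i$. Tracing (5) over $i=j$ and using $g^{ij}\n^2_{ij}f=\De f$ and $h_i^{\,k}h^i_{\,k}=|h|^2$ gives (6). Step (7) is the chain rule $\p_tF=F_i^{\,j}\p_th_j^{\,i}$ applied to (5), where $F_i^{\,j}\n_Th_j^{\,i}=\n_TF$ again by chain rule. Finally (8) is (7) applied to $F=\sigma_k$, using $\sigma_{k-1}^{ij}h_{ik}h_j^{\,k}=\sigma_1\sigma_k-(k+1)\sigma_{k+1}$ from Proposition \ref{prop2.2}(4). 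The only real obstacle is the second-order bookkeeping in (4) and the index gymnastics recombining tangential terms in (5); no new conceptual ingredient beyond Gauss--Codazzi and the choice of normal coordinates is needed.
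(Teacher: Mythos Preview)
Your approach is essentially the same as the paper's: direct computation via Gauss--Weingarten, with (5)--(8) deduced algebraically from (4). One small gap: in step (5), the tangential cross-terms do \emph{not} fully telescope into the single $\n_T h^i_j$. If you carry out the combination of $\p_t g^{ik}\cdot h_{kj}$ with $g^{ik}\p_t h_{kj}$ carefully, two of the four tangential terms cancel but the other two survive, leaving
\[
\p_t h^i_j=-\n^i\n_jf-fh^i_kh^k_j+\n_Th^i_j+h^{ik}\n_jT_k-h^k_j\n_kT^i.
\]
The paper's own derivation arrives at exactly these residual terms. They disappear in (6)--(8) not by telescoping but because, for $F$ a symmetric function of the eigenvalues of $h$, the matrix $(F^j_i)$ commutes with $(h^j_i)$; hence $F^j_i h^{ik}$ is symmetric in $j,k$ and the contraction $F^j_i\bigl(h^{ik}\n_jT_k-h^k_j\n_kT^i\bigr)$ vanishes (in particular for $F=\mathrm{tr}$ and $F=\sigma_k$). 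You should replace the ``telescope'' remark with this observation; otherwise your argument is correct.
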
  
 \begin{proof} Due to the tangential part appearance in flow \eqref{flow with normal and tangential}, we include a proof below for completeness. The assertion for $T=0$ can be found for example in  \cite[Chapter 2, Section 2.3]{Ger} or \cite[Appendix B]{Ec}. 
 
 We choose an orthonormal frame $\{e_i\}_{i=1}^n$ around some point $p$. Recall the Gauss-Weingarten formula\begin{eqnarray*}
	D_{e_i}x=e_i, \quad D_{e_i}e_j=\n_{e_i}e_j-h_{ij}\nu,\quad D_{e_i}\nu=h_{ik}e_k.
\end{eqnarray*}
We compute \begin{eqnarray*}\p_t g_{ij}&=&\p_t\langle D_{e_i}x, D_{e_j} x\rangle =	\langle D_{e_i}(f\nu+T), e_j\rangle +	\langle e_i, D_{e_j}(f\nu+T)\rangle	\\&=&%f\langle D_{e_i}\nu, e_j\rangle +f\langle e_i, D_{e_j}\nu\rangle+\langle D_{e_i}T, e_j\rangle +	\langle e_i, D_{e_j}T\rangle=
	2fh_{ij}+ \n_i T_j+\n_jT_i.\end{eqnarray*}
It follows that \begin{eqnarray*}
	\p_t dA_t=\frac12g^{ij}\p_tg_{ij}dA_t
=g^{ij}(fh_{ij}+\n_iT_j)dA_t
= (fH+\div(T))dA_t.
\end{eqnarray*}
Since $\langle \p_t\nu,\nu\rangle=0$, then  \begin{eqnarray*}	\p_t\nu =\<\p_t\nu, D_{e_i}x\>e_i= -\langle \nu, \p_tD_{e_i}x\rangle e_i% =-\langle \nu, D_{e_i}(f\nu+T)\rangle e_i\\&=&-D_{e_i}fe_i-\langle \nu, D_{e_i} T\rangle e_i
	=-\n  f+h(e_i,T) e_i.\end{eqnarray*}
We next compute \begin{eqnarray*}
\p_t h_{ij}&=&\p_t\<D_{e_i}x, D_{e_j}\nu\>
=\<D_{e_i}(f\nu+T), h_{jk}e_k\>+\<e_i, D_{e_j}(-\nabla f+h(e_k, T)e_k)\>%\\&=&fh_{ik}h_{jk}+\<\n_{e_i}T, h_{jk}e_k\> -\n^2f(e_i, e_j)+ \<e_i, D_{e_j}(h(e_k, T)e_k)\>
\\&=& -\n^2f(e_i, e_j)+fh_{ik}h_{jk}+\n_T h_{ij}+h_{jk}\n_iT_k+h_{ik}\n_jT_k,
\end{eqnarray*}
where in the last equality we have used
\begin{eqnarray*}
  \<e_i, D_{e_j}(h(e_k, T)e_k)\>&=&e_j(h(e_i, T))+h(e_k, T)\<e_i, \n_{e_j}e_k\>
% \\&=&\n_{e_j}h(e_i, T)+ h(\n_{e_j}e_i, T)+h(e_i, \n_{e_j}T)-h(e_k, T)\<e_k, \n_{e_j}e_i\>
 \\&=&\n_{T}h(e_i, e_j)+h(e_i, \n_{e_j}T)
=\n_T h_{ij}+h_{ik}\n_jT_k.
\end{eqnarray*}
It follows that
 \begin{eqnarray*} \p_t h^i_j &=&\p_t (g^{ik}h_{kj}) =-(2f h_{ik}+\n_i T_k+\n_kT_i)h_{kj}\\&&-\n^2_{ij}f +fh_{ik}h_{kj} +\n_T h_{ij}+h_{jk}\n_iT_k+h_{ik}\n_jT_k\\&=&-\n^2_{ij}f -fh_{ik}h_{kj}+\n_T h_{ij}+h_{ik}\n_jT_k- h_{jk}\n_kT_i.\end{eqnarray*}
The last three assertions (6)-(8) follow directly from (5), just noticing that
$$F^{ij}(h_{ik}\n_jT_k- h_{jk}\n_kT_i)=0,$$
and the fact
$$\frac{\p \s_k}{\p h^i_j}h_{j}^kh^{i}_k = \s_1\s_k-(k+1)\s_{k+1},$$
which follows from Proposition \ref{prop2.2} (4).
\end{proof}

Let $\S_t\subset \overline{\BB}^{n+1}$ be a family of smooth, properly embedded hypersurfaces with $\theta$-capillary boundary, evolving by \eqref{normal flow}. Then the tangential component $(\p_tx)^T$ of $\p_t x$, which we denote by $T\in T\S_t$ must satisfy \begin{eqnarray}\label{choiceT1}
T |_{\p \S_t}=f\cot\theta \mu + \tilde{T},
\end{eqnarray}where $\tilde{T}\in T(\p\S_t)$. In fact, the restriction of $x(\cdot, t)$ on $\p\BB^n$ is contained in $\SS^n$ and thus,
$$f\nu+T|_{\p \S_t}= \p_t x|_{\p\BB^n}\in T\SS^n.$$
By virtue of \eqref{co-normal bundle}, we have $$\nu=\frac{1}{\sin \theta} \bar \nu -\cot\theta \mu.$$ Since $\bar \nu\in T\SS^n$, we see $(T-f\cot\theta \mu) \in  T\SS^n\cap T\S_t$. Then \eqref{choiceT1} follows. 
Up to a diffeomorphism of $\p \BB^n$, we can assume $\tilde{T}=0$. 
For simplicity, in the following, we always assume that 
\begin{eqnarray}\label{choiceT}
T |_{\p \S_t}=f\cot\theta \mu.
\end{eqnarray}
 \begin{prop}\label{robin bdry of speed1}
Let $\S_t\subset \overline{\BB}^{n+1}$ be a family of smooth, properly embedded hypersurfaces with $\theta$-capillary boundary. Then 
 \begin{eqnarray}\label{robin bdry of speed}
		\n_\mu f=\left(\frac{1}{\sin\theta} +\cot\theta h_{\mu\mu}\right)f \text{ along }\p \S_t.
	\end{eqnarray}
 \end{prop}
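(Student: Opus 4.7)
The plan is to differentiate the capillary boundary condition $\langle \nu, \bar N\circ x\rangle = -\cos\theta$ in $t$ along $\partial \mathbb{B}^n$ and rearrange. Since $\bar N(y)=y$ for $y\in \SS^n$, and since along $\partial \mathbb{B}^n$ the trace $x(\cdot,t)$ takes values in $\SS^n$, the chain rule gives
\begin{equation*}
\partial_t(\bar N\circ x) = \partial_t x = f\nu + T \quad \text{along }\partial \S_t.
\end{equation*}
Combined with $\partial_t\nu = -\nabla f + h(e_i,T)e_i$ from Proposition \ref{basic evolution eq}(3) and the fact that $\langle \nu,T\rangle = 0$, differentiating the constraint in $t$ yields
\begin{equation*}
0 = \langle \partial_t\nu, \bar N\rangle + \langle \nu, f\nu + T\rangle = \langle \partial_t\nu, \bar N\rangle + f.
\end{equation*}

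Next I would use the decomposition \eqref{co-normal bundle}, namely $\bar N\circ x = \sin\theta\,\mu - \cos\theta\,\nu$, to extract the tangential part of $\bar N$ to $\S_t$. Since $\partial_t \nu$ is tangent to $\S_t$, only the $\sin\theta\,\mu$-component contributes, giving
\begin{equation*}
\langle \partial_t\nu, \bar N\rangle = \sin\theta\,\langle -\nabla f + h(e_i,T)e_i,\,\mu\rangle = \sin\theta\bigl(-\nabla_\mu f + h(\mu,T)\bigr).
\end{equation*}
Substituting back into the identity $\langle \partial_t\nu, \bar N\rangle = -f$ gives
\begin{equation*}
\nabla_\mu f = \frac{f}{\sin\theta} + h(\mu,T).
\end{equation*}

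Finally, I invoke the normalization \eqref{choiceT} for the tangential part, $T|_{\partial \S_t} = f\cot\theta\,\mu$, which was justified by the observation that $\partial_t x$ must lie in $T\SS^n$ along $\partial\mathbb{B}^n$ and the freedom to absorb any $T(\partial\S_t)$-component via a reparametrization. Then $h(\mu,T) = f\cot\theta\,h_{\mu\mu}$, and the displayed formula \eqref{robin bdry of speed} follows at once. No step is truly hard; the only subtle point is correctly identifying $\partial_t(\bar N\circ x)$ as $\partial_t x$ on the boundary, which crucially uses that $\bar N$ is the position field on $\SS^n$.
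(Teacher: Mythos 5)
Your proof is correct and takes essentially the same approach as the paper: differentiate the angle constraint in $t$, use $\p_t\nu = -\nabla f + h(e_i,T)e_i$ together with the decomposition $\bar N = \sin\theta\,\mu - \cos\theta\,\nu$ to isolate the $\mu$-component, and finish with the normalization $T|_{\p\S_t} = f\cot\theta\,\mu$. The only cosmetic difference is that you phrase $\p_t(\bar N\circ x) = \p_t x$ via the chain rule, while the paper writes the equivalent $D_{f\nu+T}\bar N = f\nu + T$.
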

 \begin{proof}
 Let $\{e_\alpha\}_{\alpha=1}^{n-1}$ be  an orthonormal frame  of $T(\p\S_t)\subset T\mathbb{S}^n$. Then $\{(e_\alpha)_{\alpha=1}^{n-1}, \mu\}$ forms an orthonormal frame for $T\S_t$. 
 %We will use the greek indices run from $2$ to $n$ and keep using the latin indices, running from $1$ to $n$.
Since $$\langle \nu,\overline{N}\circ x\rangle=-\cos\theta, \hbox{ along }\p\S_t,$$ by taking the time derivative and  using Proposition \ref{basic evolution eq} (3), \eqref{co-normal bundle} and \eqref{choiceT}, we obtain along $\p\S_t$,
	\begin{eqnarray*}
		0&=&\langle \p_t \nu, \overline{N} \rangle +\langle \nu, D_{f\nu+T}\overline{N}\rangle
		\\&=&\langle -\n  f+h(e_i,T)e_i, \sin\theta \mu- \cos\theta \nu\rangle +   \langle \nu, f\nu+T\rangle  
	%	\\&=\langle -\n  f+h(e_i,T)e_i, \sin\theta \mu-\cos\theta \nu\rangle +\sin\theta h^{\SS^n}(\overline{\nu}, f\nu+T) 
		\\&=&-\sin\theta \n_\mu f+\cos\theta h(\mu,\mu) f+ f.
	\end{eqnarray*} The assertion follows. \end{proof}

\noindent{\it Proof  of Theorem \ref{fvf}.}
By Proposition \ref{basic evolution eq}, using integration by parts and the fact $\mu$ is a principal direction, we obtain
\begin{eqnarray*}
	\frac{d}{dt} \int_{\Sigma_t} \sigma_k dA_t &=&\int_{\Sigma_t} \big[-\frac{\p \s_k}{\p h_i^j}\n^i\n_{j}f - f (\s_1\s_k-(k+1)\s_{k+1})+\n_T \s_k \big] dA_t \\&&+\int_{\Sigma_t} \sigma_k\left(f\sigma_1+\div(T)\right)dA_t
	\\&=&(k+1)\int_{\Sigma_t} f\sigma_{k+1} dA_t+ \int_{\p\S_t} \big(\sigma_k \langle T,\mu\rangle - \s_k^{\mu\mu}\n_\mu f\big).
	\end{eqnarray*}
Using \eqref{choiceT}, \eqref{robin bdry of speed} and Proposition \ref{prop2.2} (1), we see along $\p\S_t$,
	\begin{eqnarray*}
 \sigma_k \langle T,\mu\rangle - \s_k^{\mu\mu}\n_\mu f&=& f\left[\cot\theta ( \sigma_k-\s_k^{\mu\mu}h_{\mu\mu})-\frac{1}{\sin\theta} \s_k^{\mu\mu}\right]\\&=&f \left[\cot\theta  \sigma_k(h|h_{\mu\mu})-\frac{1}{\sin\theta}\sigma_{k-1}(h|h_{\mu\mu})\right].
\end{eqnarray*}
Recall in Proposition \ref{basic-capillary} (2), we have $$h_{\a\b}=\sin \theta \widehat{h}_{\a\b}-\cos \theta \delta_{\a\b},$$ for an orthonormal frame  $\{e_\alpha\}_{\alpha=1}^{n-1}$ of $T(\p\S_t)$,
Thus \begin{eqnarray*}
\sigma_k(h|h_{\mu\mu})&=&\sigma_k(\sin \theta \widehat{h}-\cos \theta I_{n-1}),
\end{eqnarray*}
where $I_{n-1}$ is the $(n-1)$ identity matrix. 
In general, for a $(n-1)\times(n-1)$ symmetric matrix $B$ and $0\leq k\leq n-1$, we know
\begin{eqnarray*}
	\s_k(I+B)&=&\sum_{l=0}^k \binom{k}{l} \s_k({\underbrace{I, \cdots, I}_{k-l}, \underbrace{B, \cdots, B}_{l}})
	\\&=&\sum_{l=0}^k \binom{k}{l} \frac{\binom{n-1}{k}}{\binom{n-1}{l}}\s_l(B)=\sum_{l=0}^k \binom{n-l-1}{n-k-1} \s_l(B),
\end{eqnarray*}
Thus we have 
\begin{eqnarray}
\sigma_k(h|h_{\mu\mu})&=&\sigma_k(\sin \theta \widehat{h}-\cos \theta I_{n-1}) \notag
\\&=&(-\cos \theta)^k\sum_{l=0}^k \binom{n-l-1}{n-k-1}(-\tan\theta)^l\s_l(\widehat{h}).\label{sigma alge}
\end{eqnarray}
It follows that
\begin{eqnarray*}
&&\cot\theta  \sigma_k(h|h_{\mu\mu})-\frac{1}{\sin\theta}\sigma_{k-1}(h|h_{\mu\mu})
\\&=& \cos\theta \sin^{k-1}\theta\s_k(\widehat{h})
\\&&+\frac{\cos^{k-1} \theta}{\sin\theta}  \sum_{l=0}^{k-1} (-1)^{k+l} \left[\cos^2 \theta\binom{n-l-1}{n-k-1}+\binom{n-l-1}{n-k}\right]\tan^l\theta\s_l(\widehat{h}).
\end{eqnarray*}
Recall that $\widehat{h}$ is the second fundamental form of $\p \S$ as a hypersurface in $\SS^n$, we have $\s_k(\widehat{h})=\s_k^{\SS^n}$.
It follows that
\begin{eqnarray*}
	\frac{d}{dt}\int_{\S_t} \sigma_kdA_t &=& (k+1)\int_{\S_t} f\sigma_{k+1} dA_t+ \cos\theta \sin^{k-1}\theta \int_{\p\S_t} f \sigma_k^{\mathbb{S}^n}
	\\&&+\frac{\cos^{k-1} \theta}{\sin\theta}  \sum_{l=0}^{k-1} (-1)^{k+l} \left[\cos^2 \theta\binom{n-l-1}{n-k-1}+\binom{n-l-1}{n-k}\right]\tan^l\theta \int_{\p\S_t} f\s_l^{\mathbb{S}^n}.
\end{eqnarray*}
Recall that $T$ satisfies \eqref{choiceT} and the flow \eqref{normal flow} induces a normal hypersurface flow $\p\S_t\subset \mathbb{S}^n$ with normal speed $\frac{f}{\sin\theta},$ that is, \begin{eqnarray*}
		\p_tx\big|_{\p\BB^n}=f\nu+f\cot\theta\mu=\frac{f}{\sin\theta}\bar{\nu}.
	\end{eqnarray*}%where $\overline{\nu}$ is the unit outward normal vector of $\p\S\subset \mathbb{S}^n$.
From Proposition \ref{fvf-sphere}, we know 
\begin{eqnarray*}
	\frac{d}{dt} W_{k}^{\mathbb{S}^n}(\widehat{\p\Sigma_t})=\frac{1}{\sin\theta}\binom{n}{k}^{-1}\int_{\p \S_t}f \sigma_k^{\mathbb{S}^n}.
\end{eqnarray*}
We conclude  that for $0\le k\le n-1$,
\begin{eqnarray*}
&&	\frac{d}{dt}\Bigg\{ \int_{\S_t}\sigma_k dA_t -\binom{n}{k} \cos\theta \sin^{k}\theta    W_k^{\SS^{n}}(\widehat{\p\S_t})  \\&&\quad -\cos^{k-1} \theta \sum_{l=0}^{k-1} (-1)^{k+l} \binom{n}{l}\left[\cos^{2} \theta \binom{n-l-1}{n-k-1}  +  \binom{n-l-1}{n-k}\right] \tan^l\theta   W_l^{\SS^n}(\widehat{\p\S_t})\Bigg\}
\\& =&(k+1)\int_{\S_t} \sigma_{k+1} f dA_t.
\end{eqnarray*}By using  
\begin{eqnarray*}
	\binom{n}{l}\cdot \binom{n-l-1}{n-k-1}=\binom{n}{k} \cdot \binom{k}{l} \frac{n-k}{n-l},
\end{eqnarray*}
and
\begin{eqnarray*}
	\binom{n}{l}\cdot \binom{n-l-1}{n-k}=\binom{n}{k} \cdot \binom{k}{l} \frac{k-l}{n-l},
\end{eqnarray*} we get \eqref{fvf-1}. 

For $k=n$, the computation is similar, with replace \eqref{sigma alge} by 
\begin{eqnarray*}
\s_n({h|\mu\mu})=\det(\sin \theta  \widehat{h}-\cos\theta I_{n-1})=(-\cos\theta)^{n-1} \det(I_{n-1}-\tan \theta \widehat{h}),
\end{eqnarray*}then it follows 
\begin{eqnarray*}
 \frac{d}{dt}\int_{\S_t} \sigma_n dA_t&=& %\int_{\p\S_t} \left(\sigma_n \<T,\mu\>-\s_n^{\mu\mu}\n_\mu f\right)=
-\int_{\p\S_t} \frac{f}{\sin\theta} \s_n^{\mu\mu}=\int_{\p\S_t} (-1)^{n} \frac{\cos^{n-1}\theta}{\sin\theta} f\det(I_{n-1}-\tan \theta \widehat{h})
 \\&=&\frac{\cos^{n-1} \theta}{\sin\theta}  \sum_{l=0}^{n-1} (-1)^{n+l}   \tan^l\theta  {\int_{\p\S_t} f\s_l^{\mathbb{S}^n} }.
\end{eqnarray*} We complete the proof.
\qed

\subsection{Estimates for convex hypersurfaces with $\theta$-capillary boundary}\

To begin with, we state a theorem, due to Ghomi \cite{Ghomi}, about global convexity of a compact, connected, immersed $C^2$-hypersurface with positive curvature.%, which can be regarded as a generalization of Hadamard's theorem on global convexity of ovaloids. One can refer to \cite{Ghomi, Ghomi2001, GX} for similar properties in other interesting cases.  
\begin{thm}[Ghomi \cite{Ghomi}, Theorem 1.2.5]\label{Ghomi-thm} Let $\S\subset \mathbb{R}^{n+1}$ be a compact, connected, immersed $C^2$-hypersurface with positive curvature. Then $\S$ may be extended to a $C^2$-ovaloid if and only if for any boundary component $\Gamma$ and any $p\in \Gamma$, $\Gamma\cap T_p\S=\{p\}$.
\end{thm}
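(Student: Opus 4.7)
\vspace{.3cm}
\noindent\textit{Proof plan.} The ``only if'' direction is immediate from strict convexity. If $\S\subset \tilde{\S}$ with $\tilde{\S}$ a $C^2$-ovaloid, then $\tilde{\S}$ bounds a convex body and positivity of its Gauss curvature rules out flat pieces, so $T_p\tilde{\S}\cap \tilde{\S}=\{p\}$ for every $p\in \tilde{\S}$. Since $T_p\S=T_p\tilde{\S}$ along $\S$ and $\Gamma\subset \tilde{\S}$, we conclude $\Gamma\cap T_p\S\subset \tilde{\S}\cap T_p\tilde{\S}=\{p\}$.

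For the ``if'' direction, my plan is to build the extension by capping off each boundary component, and the first step is to extract topological and convex-geometric consequences of the tangent-plane hypothesis. Since positive Gauss curvature makes the Gauss map $\nu:\S\to \SS^n$ a local diffeomorphism, $\S$ is locally strictly convex, so near each $p\in \S$ it lies on one side of $T_p\S$. I would first upgrade the boundary hypothesis to show that $\nu(\Gamma)$ is an embedded, strictly locally convex hypersurface in $\SS^n$, bounding a spherical cap $D_\Gamma$. The mechanism: any failure of embeddedness or of strict local convexity of $\nu(\Gamma)$ at some $\xi_0$ would produce two points of $\Gamma$ sharing a parallel tangent hyperplane to $\S$, and a small translation of $T_p\S$ would then cut $\Gamma$ at a second point, contradicting the hypothesis.

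The second and main step is to cap off each boundary component by a strictly convex piece matching $\S$ to second order. One route is via the support function: $h(\xi):=\<x(\nu^{-1}(\xi)),\xi\>$ is well-defined on $U:=\nu(\S)$, and I would extend $h$ to all of $\SS^n=U\cup \bigcup_\Gamma D_\Gamma$ as a $C^2$ function with positive-definite spherical Hessian $\bn^2 h+h\,\bar g$; the envelope of its tangent hyperplanes is then the desired ovaloid $\tilde{\S}\supset \S$. A conceptually cleaner alternative is to work directly with $\conv(\S)$: show that its boundary is a $C^2$-hypersurface, with $\S$ forming part of this boundary and the capping regions being strictly convex pieces glued along $\Gamma$. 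The main obstacle in either route is $C^2$-smoothness of the gluing: positive-definiteness of the second fundamental form must be preserved across $\Gamma$, and this is precisely what the tangent-plane condition encodes in a quantitative, nondegenerate form. Ghomi's argument proceeds along the convex-hull route, using a careful local analysis at $\Gamma$ to upgrade the qualitative no-reentry condition to the quantitative positivity needed to glue the cap on smoothly.
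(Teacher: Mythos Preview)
The paper does not prove this theorem. It is quoted from Ghomi's paper \cite{Ghomi} (as Theorem~1.2.5 there) and used as a black box; immediately after stating it the authors write ``Ghomi's theorem can be viewed as a generalization of Hadamard's theorem on hypersurface with boundary'' and then invoke it to prove Proposition~\ref{hadamard thm for bdry}. So there is no in-paper proof to compare your attempt against.

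As for your sketch on its own merits: the ``only if'' direction has a small gap. You assert that a $C^2$-ovaloid $\tilde\S$ satisfies $T_p\tilde\S\cap\tilde\S=\{p\}$ because ``positivity of its Gauss curvature rules out flat pieces,'' but the hypothesis of positive curvature is on $\S$, not on the extension $\tilde\S$; the paper's definition of ovaloid only requires that $\tilde\S$ lie on one side of each tangent hyperplane, which allows flat faces. The fix is easy: if $q\in\Gamma\cap T_p\S$ with $q\neq p$, then $T_p\tilde\S$ is a supporting hyperplane of the convex body $\widehat{\tilde\S}$ through $q$, so by $C^1$-uniqueness $T_q\tilde\S=T_p\tilde\S$; hence the segment $[p,q]$ lies in $\tilde\S\cap T_p\tilde\S$, and in particular $\S$ has a flat direction at $p$ along this segment, contradicting positive curvature of $\S$ at $p$.

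For the ``if'' direction your outline is reasonable in spirit but quite far from a proof. The delicate point---which you correctly flag---is obtaining $C^2$-regularity of the cap across $\Gamma$; neither the support-function extension nor the convex-hull route handles this automatically, and Ghomi's actual argument is substantially more involved than what you describe (he works with a careful local parametrization near $\Gamma$ and a perturbation argument). Since the paper treats this as an external input, a full reconstruction is not expected here.
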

Recall that a closed immersed hypersurface $\S\subset \mathbb{R}^{n+1}$ is called an ovaloid if for any $p\in \S$,  $\S$ lies on one side of $T_p\S$. The classical Hadamard theorem says that a closed embedded hypersurface with positive curvature must be an ovaloid. Ghomi's theorem can be viewed as a generalization of Hadamard's theorem on hypersurface with boundary.
\begin{prop}\label{curv-radii-prop}
For any $p\in \overline{\mathbb{B}}^n$ and $z\in \widehat{\p\S}\subset\SS^n$, it holds $$\<z-x(p),\nu(p)\>\le 0,$$ with equality holding if and only if $z=x(p)$.
\end{prop}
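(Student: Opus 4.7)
The plan is to deduce the inequality from Ghomi's extension theorem (Theorem \ref{Ghomi-thm}) together with the supporting-hyperplane property of a convex body: extend $\Sigma$ to a closed $C^2$-ovaloid $\widetilde\Sigma$ bounding a strictly convex body $\widetilde K$, then show $\widehat{\partial\Sigma}\subset\widetilde K$, so that $T_{x(p)}\Sigma$ becomes a supporting hyperplane of $\widetilde K$ at $x(p)\in\partial\widetilde K$. I would reduce to the case $h>0$ at the outset — strict convexity is what Ghomi's theorem requires and is also what the ``only if'' clause genuinely needs (the non-strict case following by approximation, and for interior $p$ the equality $z=x(p)$ cannot actually be attained with $z\in\widehat{\partial\Sigma}\subset\SS^n$).

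The first step is to verify Ghomi's hypothesis $\partial\Sigma\cap T_p\Sigma=\{p\}$ for every $p\in\partial\Sigma$. Since $T_p\partial\Sigma\subset T_p\Sigma$, the boundary is already tangent to $T_p\Sigma$ at $p$, so the content is that this tangency is isolated. Along a unit-speed curve $\gamma\subset\partial\Sigma$ with $\gamma(0)=p$, the height $G(t):=\<\gamma(t)-p,\nu(p)\>$ satisfies $G(0)=G'(0)=0$, and using the decomposition $\gamma''(0)=\nabla^{\partial\Sigma}_{\gamma'}\gamma'-\hat h(\gamma',\gamma')\overline\nu(p)-p$ together with the capillary relations $\<\overline\nu(p),\nu(p)\>=\sin\theta$ and $\<p,\nu(p)\>=-\cos\theta$ from \eqref{co-normal bundle}, one obtains
\[
G''(0)=-\sin\theta\,\hat h(\gamma'(0),\gamma'(0))+\cos\theta.
\]
By Proposition \ref{basic-capillary}(2), $\hat h\geq\cot\theta\,I$, with strict inequality under $h>0$, so $G''(0)<0$ and $p$ is a strict local maximum. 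For the global statement I would combine this with the strict spherical convexity of $\partial\Sigma$ in $\SS^n$: the intersection $T_p\Sigma\cap\SS^n$ is an $(n-1)$-sphere tangent to $\partial\Sigma$ at $p$, and a strictly convex closed hypersurface of $\SS^n$ cannot meet a tangent $(n-1)$-sphere except at the tangency, as any further contact would produce a transverse crossing inconsistent with the one-sidedness of the strictly convex region $\widehat{\partial\Sigma}$.

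Ghomi's theorem then produces the ovaloid $\widetilde\Sigma\supset\Sigma$ bounding $\widetilde K$, with $\nu$ outward along $\Sigma$, and the extension may be chosen to be strictly convex. To see $\widehat{\partial\Sigma}\subset\widetilde K$, at each $p\in\partial\Sigma$ the inward tangent of $\widehat{\partial\Sigma}$ on $\SS^n$ is $-\overline\nu(p)=-\cos\theta\,\mu(p)-\sin\theta\,\nu(p)$, whose $\nu(p)$-component equals $-\sin\theta\leq 0$ for $\theta\in(0,\frac{\pi}{2}]$, so $\widehat{\partial\Sigma}$ enters $\widetilde K$ immediately past $\partial\Sigma$; since the ovaloid extension can be taken transverse to $\SS^n$ across $\partial\Sigma$ and $\widehat{\partial\Sigma}$ is connected, it cannot leave $\widetilde K$. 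For any $p\in\overline{\BB}^n$, the supporting-hyperplane inequality for $\widetilde K$ at $x(p)\in\partial\widetilde K$ then gives $\<z-x(p),\nu(p)\>\leq 0$ for every $z\in\widetilde K\supset\widehat{\partial\Sigma}$; equality confines $z$ to the face $T_{x(p)}\widetilde\Sigma\cap\widetilde K$, which by the strict convexity of $\widetilde K$ reduces to $\{x(p)\}$, so $z=x(p)$. The main obstacle is the passage from local to global — converting the strict local maximum $G''(0)<0$ into a global separation on $\partial\Sigma$, and promoting the local ``entering'' of $\widehat{\partial\Sigma}$ into $\widetilde K$ to the global containment $\widehat{\partial\Sigma}\subset\widetilde K$ — both of which ultimately rest on upgrading the strict-convexity estimate $\hat h>\cot\theta\,I$ to a global one-sidedness statement.
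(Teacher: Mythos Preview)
Your route and the paper's diverge at the outset. The paper does \emph{not} invoke Ghomi's theorem for this proposition; it argues directly from the curvature bound $\widehat h>\cot\theta\,I$ of Proposition~\ref{basic-capillary}(2) via a comparison theorem (a Blaschke rolling-ball type result, citing \cite{Ge}) to conclude that $\widehat{\partial\Sigma}$ is contained in the geodesic ball of radius $\theta$ in $\SS^n$ tangent to $\partial\Sigma$ at $x(p)$. The capillary condition then identifies that ball with the cap $T_p\Sigma\cap\SS^n$, giving the half-space inequality at once. In the paper's logical order this proposition is proved \emph{before} Ghomi is applied (it is precisely what verifies Ghomi's hypothesis in the next proposition), so your plan reverses the dependency.

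The gap you yourself flag is genuine and is not closed by the argument you sketch. The claim ``a strictly convex closed hypersurface of $\SS^n$ cannot meet a tangent $(n-1)$-sphere except at the tangency'' is false without a curvature comparison: strict convexity ($\widehat h>0$) only controls tangent \emph{great} spheres, whereas $T_p\Sigma\cap\SS^n$ is a small sphere of geodesic radius $\theta$ and curvature $\cot\theta$. What forces one-sidedness is exactly that $\widehat h>\cot\theta\,I$ everywhere, and turning this pointwise bound into the global containment $\widehat{\partial\Sigma}\subset\widehat{S_{p,\theta}}$ is the content of the comparison theorem the paper cites. Once you grant that, you already have the proposition for boundary $p$ without ever touching Ghomi; conversely, without it, your verification of Ghomi's hypothesis remains local. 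Your second global step, $\widehat{\partial\Sigma}\subset\widetilde K$, is also under-justified: Ghomi's extension gives no control on where $\widetilde\Sigma\setminus\Sigma$ sits relative to $\SS^n$, so the transversality and ``cannot leave'' claims need more. In short, the missing ingredient in your plan is precisely the comparison theorem the paper uses---and with it in hand, the paper's direct argument is both shorter and avoids the detour through the ovaloid extension.
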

\begin{proof} Consider $\p\S\subset \SS^n$. From Proposition \ref{basic-capillary}, we see that  $$(\widehat{ h}_{\alpha\beta})>\cot\theta (\delta_{\a\b})\ge 0.$$From a comparison theorem (cf. \cite[Theorem 0.1]{Ge}), we have \begin{eqnarray}\label{comparison}
\max_{y\in\p\S} {\rm dist}_{\SS^n}(y, \p\S)<\theta.
\end{eqnarray}

For each $p\in \overline{\mathbb{B}}^n$, we denote by $S_{p, \theta}$ the geodesic sphere in $\SS^n$ of radius $\theta$ passing through $x(p)$. It follows easily from \eqref{comparison} that \begin{eqnarray}\label{curv-radii-prop1}
\widehat{\p\S}\subset \widehat{S_{p, \theta}}\hbox{ with }\p\S\cap S_{p, \theta}=\{x(p)\}.
\end{eqnarray}
See Figure 1. 
	\begin{figure}[H] \includegraphics[width=0.35\linewidth]{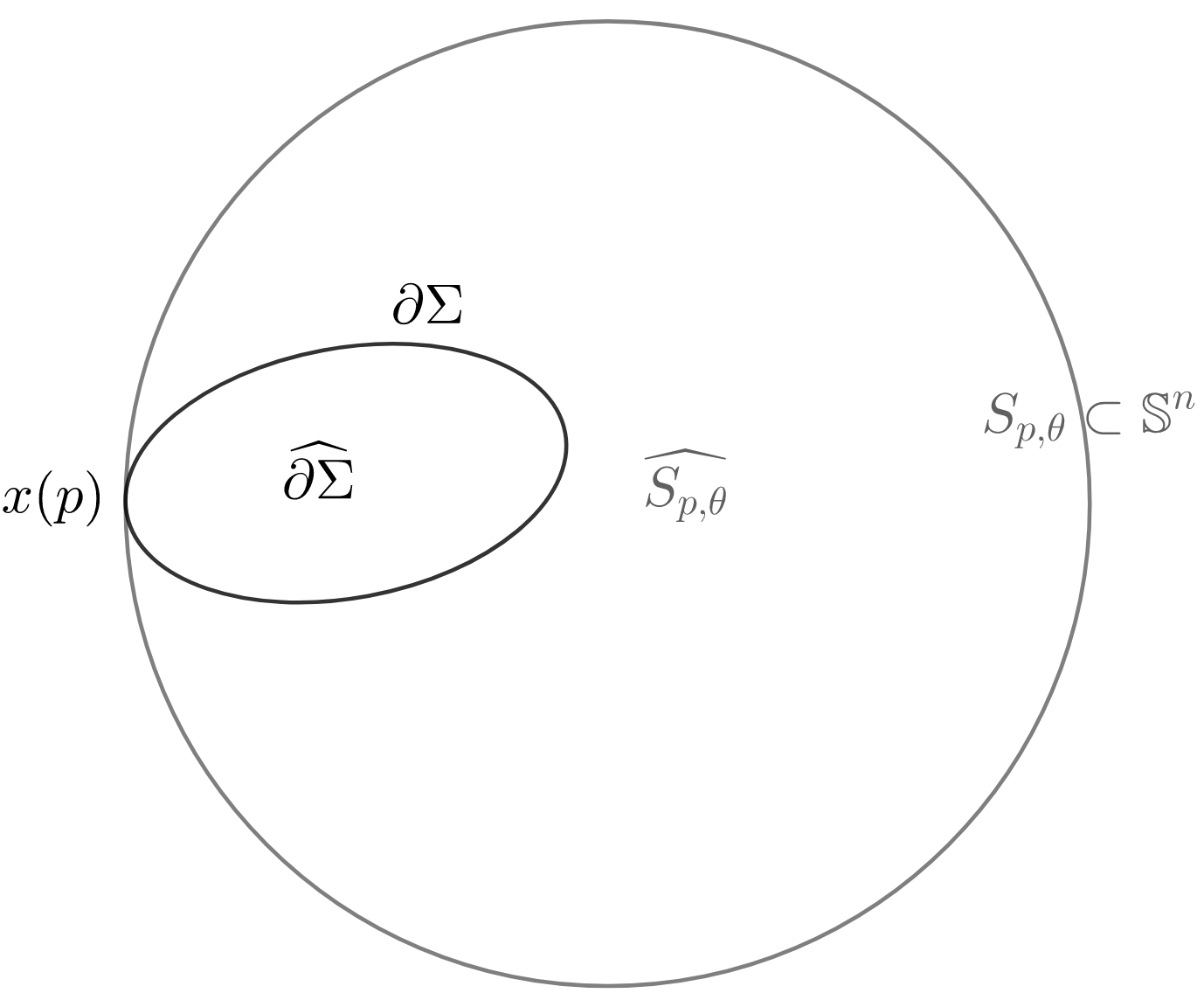} \label{fig0}  \caption*{Figure 1}   
\end{figure}
Since $\S$ is $\theta$-capillary, we see that $T_{p}\S\cap \SS^n=S_{p, \theta}$ for each $x(p)\in \overline{\mathbb{B}}^n$. Therefore, \eqref{curv-radii-prop1} implies that $\widehat{\p\S}\setminus\{x(p)\}$ lies strictly on one side of $T_{p}\S$, which leads to the assertion.
\end{proof}

Applying Ghomi's theorem and Proposition \ref{curv-radii-prop}, we have the following property for a strictly convex $\theta$-capillary hypersurface in the unit ball. 
\begin{prop}\label{hadamard thm for bdry}
 Let $\S\subset \overline{\mathbb{B}}^{n+1}$ be a strictly convex hypersurface with $\theta$-capillary boundary for $\theta\in (0,\frac{\pi}{2}]$, which is given by the embedding $x:\overline{\mathbb{B}}^n\to \overline{\mathbb{B}}^{n+1}$. Then for any $p\in \S$,  $\S$ lies on one side of $T_p\S$.
\end{prop}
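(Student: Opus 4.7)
The plan is to reduce the statement to Ghomi's Hadamard-type theorem (Theorem \ref{Ghomi-thm}) by verifying its boundary hypothesis at every point of $\partial\Sigma$. Since $\Sigma$ is compact, connected, and strictly convex, it is a $C^2$-hypersurface with positive sectional curvature, so the only nontrivial input required is the tangential condition $\Gamma\cap T_p\Sigma=\{p\}$ for each $p$ on each boundary component $\Gamma$ of $\Sigma$. For $n\ge 2$ the boundary $\partial\Sigma=x(\partial\mathbb{B}^n)$ is connected, so there is a single component $\Gamma=\partial\Sigma$ to inspect.

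The first step is to observe that Proposition \ref{curv-radii-prop} directly supplies the required tangential condition at boundary points. Indeed, for $p\in\partial\mathbb{B}^n$, its image $x(p)\in\partial\Sigma\subset\widehat{\partial\Sigma}$, and Proposition \ref{curv-radii-prop} asserts that
\[
\langle z-x(p),\nu(p)\rangle\le 0\qquad\text{for all }z\in\widehat{\partial\Sigma},
\]
with equality only when $z=x(p)$. In particular this holds for every $z\in\partial\Sigma\subset\widehat{\partial\Sigma}$. Since $T_{x(p)}\Sigma=\{y\in\mathbb{R}^{n+1}:\langle y-x(p),\nu(p)\rangle=0\}$, the strict inequality on $\partial\Sigma\setminus\{x(p)\}$ means precisely that
\[
\partial\Sigma\cap T_{x(p)}\Sigma=\{x(p)\},
\]
which is exactly the hypothesis of Ghomi's theorem.

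With that hypothesis verified, Ghomi's theorem produces a closed $C^2$-ovaloid $\widetilde{\Sigma}\subset\mathbb{R}^{n+1}$ with $\Sigma\subset\widetilde{\Sigma}$. The defining property of an ovaloid is that at every one of its points $q$, the entire hypersurface lies on one closed side of $T_q\widetilde{\Sigma}$. For any $p\in\Sigma$ we have $T_p\widetilde{\Sigma}=T_p\Sigma$ (they coincide as $C^2$-immersed submanifolds in a neighborhood of $p$), so the one-sidedness of $\widetilde{\Sigma}$ passes down to $\Sigma$, giving the claim.

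The only real point to be careful about is the inclusion $\partial\Sigma\subset\widehat{\partial\Sigma}$ used to feed Proposition \ref{curv-radii-prop} at boundary points, but this is immediate from the definition of $\widehat{\partial\Sigma}$ as the convex region in $\mathbb{S}^n$ enclosed by $\partial\Sigma$; the remainder of the argument is a clean invocation of Ghomi's theorem, so no serious obstacle is anticipated.
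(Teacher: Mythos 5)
Your proposal is correct and follows essentially the same route as the paper's own proof: invoke Proposition \ref{curv-radii-prop} to verify the tangential hypothesis $\p\S\cap T_p\S=\{p\}$ for boundary points, then apply Ghomi's theorem to extend $\S$ to a $C^2$-ovaloid and read off the one-sidedness. Your write-up is simply more explicit about the details (the inclusion $\p\S\subset\widehat{\p\S}$, the identification of $T_p\S$ as a hyperplane, and why the ovaloid's tangent-plane property passes down to $\S$), but there is no difference in method.
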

\begin{proof}  Proposition \ref{curv-radii-prop} implies for $p\in \p\S$, $\p\S\cap T_p\S=\{p\}$.
By using Theorem \ref{Ghomi-thm},  we see that $\S$ may be extended to a $C^2$-ovaloid, in particular, our assertion follows.
\end{proof}

The following facts about strictly convex hypersurface with $\theta$-capillary boundary   will be used later. 
\begin{prop}\label{ellipticity} Let $\S\subset \overline{\mathbb{B}}^{n+1}$ be a strictly convex hypersurface with $\theta$-capillary boundary for $\theta\in (0,\frac{\pi}{2}]$, which is given by the embedding $x:\overline{\mathbb{B}}^n\to \overline{\mathbb{B}}^{n+1}$. %Let $\widehat{\p\S}$ denote the closed convex domain in the sphere enclosed by the  hypersurface $\p\S\subset \mathbb{S}^n$.
Then there exists $e_0\in {\rm int}(\widehat{\p\S})$ and some $\delta_0>0$, depending on $\S$,  such that the following properties hold:
	\begin{enumerate}	
		\item\label{(1)} $		\langle x,e_0\rangle \ge \cos\theta+\delta_0$.
	
\item\label{(2)}  $\langle \nu,e_0\rangle<-\cos^2\theta$.
	
	\item\label{(3)}   $\langle \mu,e_0\rangle >0$ on $\p{\mathbb{B}}^n$.%, where $\mu$ is unit co-normal vector along $\p\S$ in $\S$.

\item\label{(4)} $
	\langle x+\cos\theta \nu,e_0\rangle \geq \delta_0>0 
$.

	\end{enumerate}
\end{prop}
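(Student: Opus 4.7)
The plan is to construct $e_0\in\mathrm{int}(\widehat{\p\S})$ by spherical comparison on $\p\S\subset\SS^n$, verify all four inequalities on $\p\S$, and then propagate (1)--(2) to $\mathrm{int}(\S)$ via the global convexity of $\widehat\S$ provided by Proposition~\ref{hadamard thm for bdry}; compactness will then yield a uniform $\delta_0>0$.

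By Proposition~\ref{basic-capillary}(2) and Corollary~\ref{convexity}, the principal curvatures of $\p\S$ inside $\SS^n$ strictly exceed $\cot\theta\ge 0$, so the comparison argument used in the proof of Proposition~\ref{curv-radii-prop} shows that both the inradius and the circumradius of $\widehat{\p\S}$ are strictly less than $\theta$. I choose $e_0\in\mathrm{int}(\widehat{\p\S})$ and $r_0<\theta$ with $\widehat{\p\S}\subset B_{r_0}(e_0)\subset\SS^n$; the same argument also places $e_0$ strictly inside each $\widehat{S_{p,\theta}}=B_\theta(-\nu(p))$ for every $p\in\p\S$. Consequently, for every $p\in\p\S$,
\[
\langle x(p),e_0\rangle\ge\cos r_0>\cos\theta,\qquad \langle\nu(p),e_0\rangle<-\cos\theta\le-\cos^2\theta,
\]
yielding (1) and (2) on $\p\S$ with slack $\delta_1:=\cos r_0-\cos\theta>0$. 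For (3) on $\p\S$, the decomposition $\mu=\sin\theta\,x+\cos\theta\,\overline{\nu}$ from \eqref{co-normal bundle} together with the representation $e_0=\cos\phi\,x+\sin\phi\,\tau$ (where $\phi=d_{\SS^n}(x,e_0)\le r_0$ and $\tau\in T_x\SS^n$ is the unit initial tangent of the $\SS^n$-geodesic from $x$ to $e_0$) gives $\langle\mu,e_0\rangle\ge \sin\theta\cos\phi-\cos\theta\sin\phi=\sin(\theta-\phi)\ge\sin(\theta-r_0)>0$; and (4) on $\p\S$ follows from (1) via $\langle x+\cos\theta\,\nu,e_0\rangle\ge\langle x,e_0\rangle-\cos\theta\ge\delta_1$.

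To extend (1) and (2) to $\mathrm{int}(\S)$, I invoke Proposition~\ref{hadamard thm for bdry}: $\S$ extends to a closed $C^2$ ovaloid, so $\widehat\S\subset\overline{\BB}^{n+1}$ is a strictly convex body with $e_0\in\p\widehat\S\setminus\S$, and in particular $\langle e_0-x,\nu(x)\rangle<0$ for every $x\in\S$. Moreover, each tangent plane $T_p\S$ at $p\in\p\S$ lies at Euclidean distance $\cos\theta$ from the origin and separates the origin from $\widehat\S$, so $|y|\ge\cos\theta$ throughout $\widehat\S$. A Gauss-map/maximum-principle argument on $\langle x,e_0\rangle$ and $\langle\nu,e_0\rangle$ then shows that any interior extremum $x_0$ forces $\nu(x_0)=\pm e_0$: the case $\nu(x_0)=e_0$ is ruled out by strict convexity of $\widehat\S$ (it would force $x_0=e_0\notin\S$), while $\nu(x_0)=-e_0$ is controlled by combining $|x_0|\ge\cos\theta$ with $\theta\le\tfrac{\pi}{2}$. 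Hence the pointwise bounds on $\p\S$ extend to all of $\S$, and compactness produces a single $\delta_0>0$ satisfying (1)--(4) simultaneously. The most delicate step is excluding the interior critical configuration $\nu(x_0)=-e_0$, which uses both the hypothesis $\theta\le\tfrac{\pi}{2}$ and the strict convexity of $\S$ in an essential way.
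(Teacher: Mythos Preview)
Your strategy differs from the paper's in a crucial way, and this difference is exactly where a gap appears. The paper \emph{constructs} $e_0$ from the interior of $\Sigma$: it locates the interior maximum $p_0$ of $\phi:=-\langle x,\nu\rangle$, checks that $x(p_0)\parallel\nu(p_0)$ with $|x(p_0)|>\cos\theta$, and sets $e_0:=-\nu(p_0)$. Then (1) is immediate from Proposition~\ref{hadamard thm for bdry} applied \emph{at $p_0$}: for every $p$ one has $\langle x(p)-x(p_0),\nu(p_0)\rangle\le 0$, i.e.\ $\langle x(p),e_0\rangle\ge\langle x(p_0),e_0\rangle=|x(p_0)|>\cos\theta$. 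The containment $e_0\in\mathrm{int}(\widehat{\p\S})$ is then proved afterwards. Your route is the reverse: pick $e_0$ as a circumcenter-type point of $\widehat{\p\S}$, verify (1)--(4) on $\p\S$, and then try to push (1) inward.

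The boundary estimates and your extension of (2) are fine (indeed your argument gives the stronger $\langle\nu,e_0\rangle<-\cos\theta$ everywhere, since interior critical points of $\langle\nu,e_0\rangle$ with $\nu=e_0$ are ruled out exactly as you say, and those with $\nu=-e_0$ are local minima). The problem is (1). At an interior minimum $x_0$ of $\langle x,e_0\rangle$ you correctly get $\nu(x_0)=-e_0$, whence $\langle x_0,e_0\rangle=-\langle x_0,\nu(x_0)\rangle$. But your claimed control ``combining $|x_0|\ge\cos\theta$ with $\theta\le\tfrac{\pi}{2}$'' does not deliver the bound: writing $x_0=x_0^T+\langle x_0,e_0\rangle e_0$ with $x_0^T\in T_{x_0}\Sigma$, the inequality $|x_0|\ge\cos\theta$ only says $|x_0^T|^2+\langle x_0,e_0\rangle^2\ge\cos^2\theta$, which gives no lower bound on $\langle x_0,e_0\rangle$ without controlling $|x_0^T|$. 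There is no reason, for a circumcenter $e_0$, that $x_0$ should be parallel to $e_0$; this alignment is exactly what the paper's choice $e_0=-\nu(p_0)$ buys (since at $p_0$ one has $x(p_0)\parallel\nu(p_0)$), and it is why (1) drops out in one line there. As written, your interior argument for (1) (and hence for (4), which you derive from (1)) is incomplete; you would need an additional ingredient tying $x_0$ to the $e_0$-axis, or else to revert to the paper's construction of $e_0$.
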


\begin{proof}	The conclusion for $\theta=\frac{\pi}{2}$ has been proved by Lambert-Scheuer \cite[Section 4]{LS}. Hence we prove the case for $\theta\in(0,\frac{\pi}{2})$.

(1) {\bf Step 1.} Define $$\phi(p):=-\left<x(p),\nu(p)\right>, \quad p\in \overline{\BB}^n.$$
Note that $\phi|_{ \p \mathbb{B}^n}\equiv \cos\theta>0$. On the other hand, since $\mu$ is a principal direction and $\S$ is strictly convex, %from equations \eqref{second f.f. of mu} and \eqref{co-normal bundle}, we know that
	$$D_\mu \phi =- h_{\mu\mu}\<x,\mu\>=-  h_{\mu\mu}\sin\theta<0,\quad   \hbox{ along } \p \mathbb{B}^n,$$
which implies that $\phi$ cannot attain its maximum on $\p \mathbb{B}^n$. Thus, $\phi$ attains its maximum at some interior point, say $p_0\in  \BB^n$. Therefore $$\phi(p_0)>\phi\big|_{ \p \mathbb{B}^n}= \cos \theta,$$
and at $p_0$, we have $$0=D_{e_i}\phi(p_0)=-h_{ij}(p_0)\left<x(p_0),e_j(p_0)\right>,$$ for any $e_i\in T_{p_0}\S$.
	Since $(h_{ij}(p_0))$ is positive definite, we get $\left<x(p_0),e_j(p_0)\right>=0$ for any $e_j\in T_{p_0}\S$. This implies $x(p_0)\parallel \nu(p_0)$. Since $\phi(p_0)>\cos \theta>0,$ for $\theta\in (0,\frac{\pi}{2})$, we obtain
 $$-\left<x(p_0),\nu(p_0)\right>=|x(p_0)|>\cos\theta.$$
%Since $\S$ is convex, we have %it must lie on the one side of tangent hyperplane at $x(p_0)$, thus
From Proposition \ref{hadamard thm for bdry}, we see 
	$$\left<x(p)-x(p_0),\nu(p_0)\right>\le 0, \hbox{ for } p\in \overline{\BB}^n.$$
	Define $e_0:=-\nu(p_0)$, it follows $$\left<x(p), e_0\right>=\left<x(p), -\nu(p_0)\right>\ge -\left<x(p_0),\nu(p_0)\right> >\cos\theta,   $$for all $p\in \overline{\BB}^n$. See Figure 2. 
	\begin{figure}[H] \includegraphics[width=0.35\linewidth]{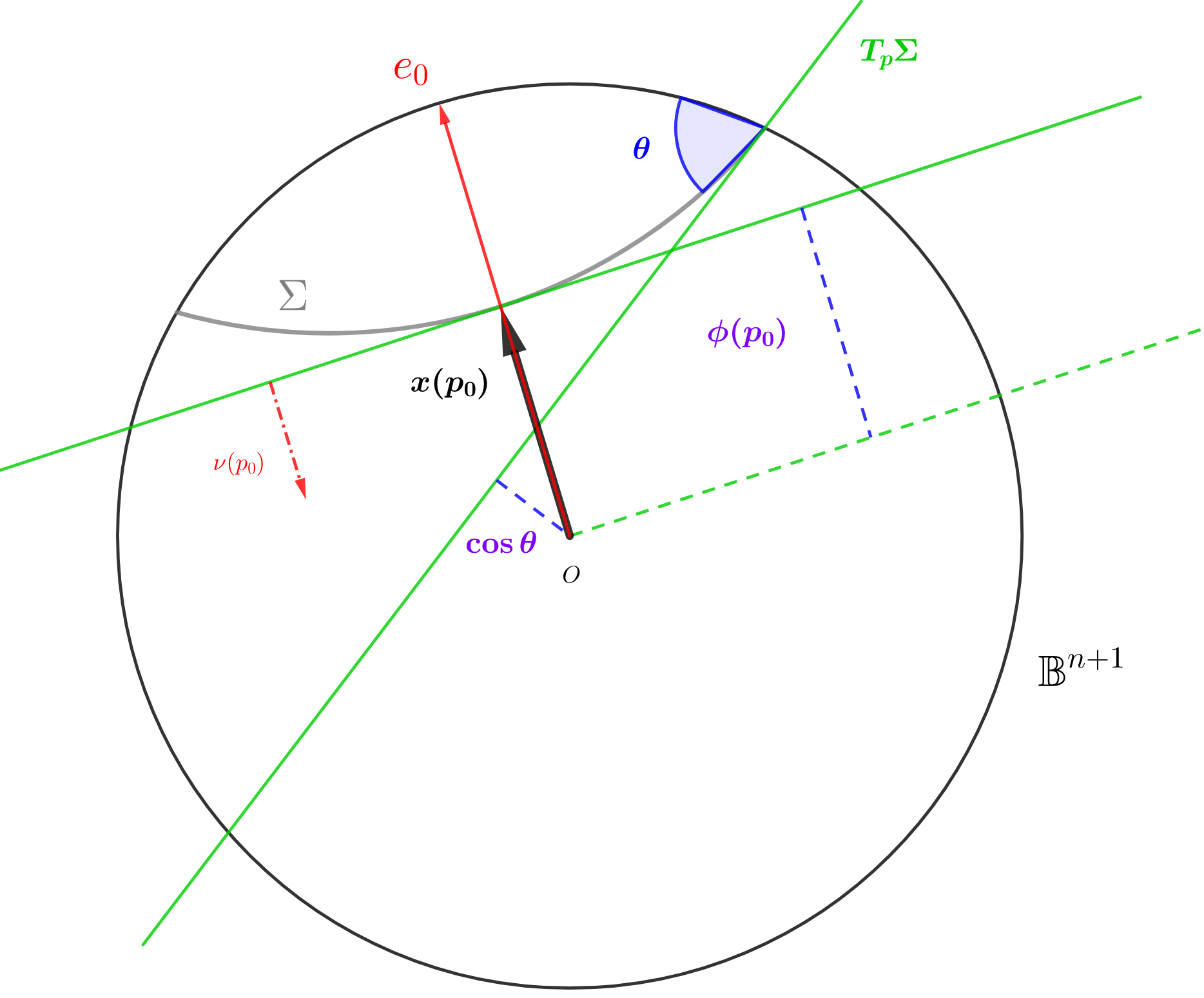} \label{fig1}  \caption*{Figure 2}   
\end{figure}
Next, we claim that $e_0$ must be contained in ${\rm int}(\widehat{\p \S})$. In fact, if not, there exists some $\tilde{p}\in \p  \BB^n$   such that $ x(\tilde{p})\in \p\S$ and  ${\rm dist}_{\SS^n}(e_0, x(\tilde{p}))={\rm  dist}_{\SS^n}(e_0, \p \S)\ge 0$. Using Proposition \ref{hadamard thm for bdry}, we see that $\S$ lies on one side of $T_{\tilde{p}}\S$, say $T_{\tilde{p}}\S^+$. By the fact  $\theta\in(0,\frac{\pi}{2})$, we see that $T_{\tilde{p}}\S^+$ does not contain the line segment $l$ through $e_0$ and the origin. However, $l$ goes through $x(p_0)\in \S$ since  $e_0=-\nu(p_0)=\frac{x(p_0)}{|x(p_0)|}$. This leads to a contradiction and we get the claim.

{\bf Step 2.} Denote the tangent hyperplane $ T_{p_0}\S:=\mathcal{H}$, where $p_0$ is determined in Step 1. It is known that $\mathcal{H}$ intersects $\mathbb{S}^{n}$ at the angle  $\theta_1<\theta$, where $\theta_1$ is given by
$$\cos\theta_1=|x(p_0)|>\cos\theta.$$

Now we parallel transport $\mathcal{H}$ downwards along the direction $-e_0$ until it achieves a position $\mathcal{H}'$ which satisfies that $\mathcal{H}'$ intersects with $\mathbb{S}^{n}$ at the   angle $\theta$, see Figure 3.
\begin{figure}[H] \includegraphics[width=0.35\linewidth]{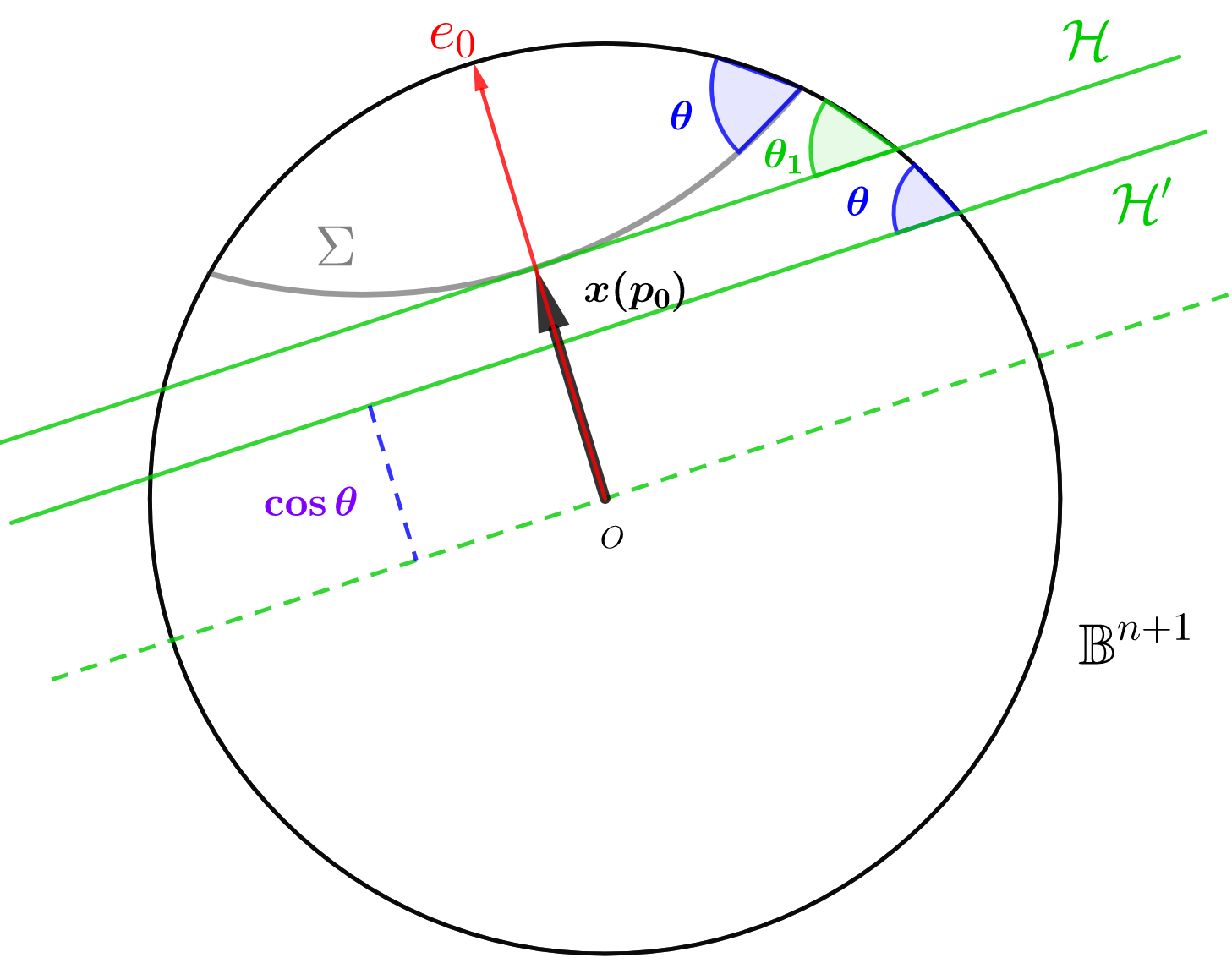} \label{fig2}  \caption*{Figure 3}   
	\end{figure}
We construct the foliation $C_{\theta, s}(e_0), s\ge 0$ so that $C_{\theta, \infty}(e_0):=\mathcal{H}'$. Recall $C_{\theta, s}(e_0), s>0$ is the spherical cap of radius $s$ around $e_0$  with $\theta$-capillary boundary, see \eqref{static model}.
	
	Since $C_{\theta, 0}(e_0)=\mathcal{H}'$ does not touch $\S$ and lies below $\S$, there exists some $s_0>0$, depending on $\theta_1$, such that $C_{\theta, s_0}(e_0)$ touches $\S$ at the first time. Hence $$\left<x, e_0\right>\ge \min_{x\in C_{\theta, 0}(e_0)}\left<x, e_0\right>:=\cos\theta+\delta_0,$$
for some $\delta_0>0$. From above, it is clear that $\delta_0$ depends on $s_0$ and in turn on $\S$. 

Hence we complete the proof of statement (\ref{(1)}).

\

(2) From (1), we know that $$\S\subset H_{e_0}:=\{\<x, e_0\>>0\}.$$ Since $e_0\in {\rm int}(\widehat{\p \S})$, we see that $$\widehat{\p \S}\subset H_{e_0}.$$ On the other hand, we see from Corollary \ref{convexity} that $\p\S$ is a strictly convex hypersurface in $\SS^n$. It follows from \cite{dCW,GX} that, there exist two disjoint open connected components $A$ and $B$, such that $$\SS^n\setminus\bar \nu(\p \BB^n)=A\cup B,$$ where $A$ is the interior of the strictly convex body in $\SS^n$ which $\bar \nu(\p \BB^n)$ bounds.
Since $\widehat{\p \S}\subset H_{e_0}$, we have $$\p A\subset H_{-e_0},$$
which implies $$\left<\bar \nu(p), e_0\right>\le 0, \quad p\in\p\BB^n.$$ Using \eqref{co-normal bundle}, we  get
 $$\left<\nu(p)+\cos\theta x(p) , e_0\right>\le 0, \quad p\in\p\BB^n.$$ Combining with $\left<x(p), e_0\right> > \cos\theta$ from (\ref{(1)}), it yields that \begin{eqnarray}\label{eq1}
	\left<\nu(p), e_0\right>\le -\cos\theta\left<x(p), e_0\right><-\cos^2\theta,\quad p\in\p\BB^n.
\end{eqnarray}
On the other hand, %since $\S$ is strictly convex, we know $\nu: \BB^n\to \nu(\BB^n)$, $\nu|_{\p\BB^n}: \p \BB^n\to \nu(\p\BB^n)$ are both  diffeomorphisms. Thus $\nu(\BB^n)$ is connected in $\SS^n$. 
assume $$\mathbb{S}^n\setminus \nu(\p \BB^n)=\tilde{A}\cup \tilde{B},$$where $\tilde{A}$ is the component which contains $e_0$. In view of \eqref{eq1},  it holds that $$\left<y,  e_0\right> < -\cos^2\theta, \quad y\in \tilde{B}.$$ Since $\nu(\BB^n)$ is simply connected, $\nu(\BB^n)$ is either $\tilde{A}$ or $\tilde{B}$. However, $\nu(\BB^n)$ cannot contain $e_0$. Thus $\nu(\BB^n)=\tilde{B}$. Therefore  $$\left<\nu(p), e_0\right><  -\cos^2\theta, \quad p\in\overline{\BB}^n.$$

 \vspace{.4cm}
(3) Using equation \eqref{co-normal bundle} and the statement (\ref{(1)}),  we deduce
\begin{eqnarray*}
	\langle \mu(p),e_0\rangle =\frac{1}{\sin\theta}\langle  x(p)+\cos\theta \nu(p),e_0\rangle >\frac{1}{\sin\theta}\left(\cos\theta+\cos\theta \langle \nu(p),e_0\rangle \right)>0,
\end{eqnarray*}for all $p\in\p\BB^n$.
\vspace{.4cm}

(4) 	This follows directly from (1).
	\end{proof}	

%Next, using the convexity of $\S$, we can prove the following
\begin{prop}\label{star-shaped}
Let $\S\subset \overline{\mathbb{B}}^{n+1}$ be a strictly convex hypersurface with $\theta$-capillary boundary for $\theta\in (0,\frac{\pi}{2})$ and $e_0$ is given in Proposition \ref{ellipticity}. %Assume there exists a constant $\ep_0>0$ such that $\frac12|x-a_0|^2\ge \ep_0$ on $\S$, then it holds 
Then there exists $\ep>0$ depending on $\S$, such that
	\begin{eqnarray*}
		\langle X_{e_0},\nu\rangle  \ge \ep \cos^2 \theta.
	\end{eqnarray*}
\end{prop}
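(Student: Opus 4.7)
The plan is to exhibit $\langle X_{e_0},\nu\rangle$ as a sum of two manifestly nonnegative terms, and then to use compactness to see that one of them is uniformly bounded below. The key algebraic observation, which I would set up first, is the identity
\begin{eqnarray*}
\tfrac{1}{2}(|x|^2+1)=\langle x,e_0\rangle+\tfrac{1}{2}|x-e_0|^2,
\end{eqnarray*}
valid since $|e_0|=1$. Substituting this into the definition $X_{e_0}=\langle x,e_0\rangle x-\frac{1}{2}(|x|^2+1)e_0$ and pairing with $\nu$ yields
\begin{eqnarray*}
\langle X_{e_0},\nu\rangle=\langle x,e_0\rangle\,\langle x-e_0,\nu\rangle-\tfrac{1}{2}|x-e_0|^2\,\langle\nu,e_0\rangle.
\end{eqnarray*}

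Next I would verify that both terms on the right are nonnegative. For the first term, Proposition \ref{ellipticity}(\ref{(1)}) gives $\langle x,e_0\rangle\geq\cos\theta+\delta_0>0$, while Proposition \ref{curv-radii-prop} applied with $z=e_0\in\mathrm{int}(\widehat{\p\S})\subset\widehat{\p\S}$ gives $\langle x-e_0,\nu\rangle\geq 0$ at every point of $\S$. For the second term, Proposition \ref{ellipticity}(\ref{(2)}) yields $-\langle\nu,e_0\rangle>\cos^2\theta$, so
\begin{eqnarray*}
-\tfrac{1}{2}|x-e_0|^2\langle\nu,e_0\rangle\geq\tfrac{1}{2}\cos^2\theta\,|x-e_0|^2,
\end{eqnarray*}
and therefore
\begin{eqnarray*}
\langle X_{e_0},\nu\rangle\geq\tfrac{1}{2}\cos^2\theta\,|x-e_0|^2.
\end{eqnarray*}

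Finally I would produce a uniform positive lower bound $|x-e_0|\geq c(\S)>0$ on $\S$. Indeed, $e_0$ is in the relative interior of $\widehat{\p\S}\subset\mathbb{S}^n$ whereas $\S\cap\mathbb{S}^n=\p\S$ is the spherical boundary of $\widehat{\p\S}$ and does not contain $e_0$, while at any interior point of $\S$ one has $|x|<1$ and hence $|x-e_0|\geq 1-|x|>0$; compactness of $\overline{\mathbb{B}}^n$ and continuity of $p\mapsto|x(p)-e_0|$ then give the required $c>0$. Setting $\ep:=c^2/2$ completes the proof. I do not anticipate any serious obstacle: the only nontrivial input is the application of Proposition \ref{curv-radii-prop} at the special point $z=e_0$, which really encodes the strict convexity of $\S$ together with the $\theta$-capillary condition; everything else is the algebraic identity above, a sign check, and a compactness argument.
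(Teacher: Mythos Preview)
Your proof is correct and follows essentially the same approach as the paper: the identical algebraic rewriting $\langle X_{e_0},\nu\rangle=\langle x,e_0\rangle\langle x-e_0,\nu\rangle-\tfrac{1}{2}|x-e_0|^2\langle\nu,e_0\rangle$, the same sign analysis using Proposition~\ref{ellipticity}(1)(2) and the convexity fact $\langle x-e_0,\nu\rangle\ge 0$, and the same compactness argument for $|x-e_0|\ge c>0$. The only cosmetic difference is that you explicitly invoke Proposition~\ref{curv-radii-prop} for the inequality $\langle x-e_0,\nu\rangle\ge 0$, whereas the paper simply attributes it to strict convexity together with $e_0\in\widehat{\partial\Sigma}$.
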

\begin{rem} In the case $\theta=\frac{\pi}{2}$, Lambert-Scheuer \cite{LS} showed that for  strictly convex hypersurface with free boundary and $e_0\in {\rm int}(\widehat{\p\S})$ such that $\widehat{\p\S}\subset H_{e_0}$, 
there exists some constant $\ep>0$ depending on $\S$ such that $\langle X_{e_0},\nu\rangle \ge \ep$. Our estimate in Proposition \ref{star-shaped} is weaker in the case $\theta=\frac{\pi}{2}$. Nevertheless, it is enough for use in the case $\theta\in (0,\frac{\pi}{2})$.
\end{rem}
\begin{proof}
 Since $\S$ is strictly convex and $e_0\in \widehat{\p\S}$, then \begin{eqnarray}\label{xeq1}
\langle x-e_0,\nu\rangle \ge 0.
\end{eqnarray}
Since $\S$ is compact and $e_0\not\in \S$,  there exists  $\ep>0$ such that $\frac12|x-e_0|^2\ge \ep$ on $\S$.
Recall that $X_{e_0}=\langle x, e_0\rangle x-\frac{|x|^2+1}{2}e_0$.  Thus 
	\begin{eqnarray*}
		\langle X_{e_0},\nu\rangle &=&\langle x,e_0\rangle \langle x,\nu\rangle-\frac{|x|^2+1}{2} \langle \nu,e_0\rangle
		\\&=&\langle x,e_0\rangle \langle x-e_0,\nu\rangle -\frac{|x-e_0|^2}{2} \langle \nu,e_0\rangle
		\ge \ep\cos^2\theta.
	\end{eqnarray*}where the last inequality follows from \eqref{xeq1} and Proposition \ref{ellipticity} (2).
\end{proof}

\

\section{Locally constrained curvature flow}\label{sect 3}
In this section, we study the flow. Let $\S_0:=\S\subset \overline{\BB}^{n+1}$ be a smooth, properly embedded,  strictly convex hypersurface with $\theta$-capillary boundary, given by the embedding $x_0: \overline{\mathbb{B}}^n\to \overline{\mathbb{B}}^{n+1}$. Choose $e_0$ as in Proposition \ref{ellipticity}, which depends only on $\S_0$. For simplicity of notation, we omit the subscription and write $e=e_0$.
  Let $\S_t\subset \overline{\BB}^{n+1}$ be a family of smooth, properly embedded,  strictly convex hypersurfaces with $\theta$-capillary boundary, given by $x(\cdot, t): \overline{\mathbb{B}}^n\to \overline{\mathbb{B}}^{n+1}$, evolving by 
 \begin{equation}\label{flow with capillary}
\left\{ \begin{array}{lll}
&\partial_t x(\cdot, t)  =f(\cdot, t) \nu(\cdot, t)+T(\cdot, t), \quad &
 \hbox{ in }\mathbb{\BB}^n\times[0,T),\\
&\langle \nu(\cdot, t),\overline{N}\circ x(\cdot,t)\rangle  = -\cos\theta 
 \quad & \hbox{ on }\partial \mathbb{\BB}^n\times [0,T),\\
& x(\cdot,0)  = x_0(\cdot) \quad & \text{ in }   \overline{\BB}^n.
 \end{array}\right.
 \end{equation}
 where  \begin{eqnarray}\label{speed}
 	f=\frac{\langle x+\cos\theta  \nu, e\rangle }{F}-\langle X_e,\nu\rangle,
 \end{eqnarray} with $$F=\frac{H_n}{H_{n-1}}=\frac{n\sigma_n }{\sigma_{n-1}},$$ and $T\in T\S_t$ such that \eqref{choiceT} holds.
 % with $c_{n,k}:=\frac{\sigma_{k-1}(I)}{\sigma_k(I)}=\frac{k}{n-k+1}$.
The following property of the flow \eqref{flow with capillary} is crucial in this paper.
\begin{prop}\label{monotone along flow}
	As long as $\S_t$ is strictly convex and the flow \eqref{flow with capillary} exists, %with $F=\frac{n\sigma_n}{ \sigma_{n-1}}$ in \eqref{speed} and $n\geq 2$, the quantity 
	$ W_{n,\theta}$ is preserved and $ W_{k,\theta}$ is non-decreasing  for $1\leq k\leq n-1$.
\end{prop}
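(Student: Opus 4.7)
The plan is to combine the first variational formula (Theorem \ref{fvf}) with the higher-order Minkowski identity (Proposition \ref{minkowski formulae 1}) and the Newton--MacLaurin inequality (Proposition \ref{prop2.3}). First I would apply Theorem \ref{fvf}, noting that the variational formula depends only on the normal component $(\partial_t x)^{\perp} = f\nu$, so the tangential correction $T$ in \eqref{flow with capillary} contributes nothing at this stage. This yields, for each $1\le k\le n$,
\[
\frac{d}{dt} W_{k,\theta}(\widehat{\Sigma_t}) = \frac{n-k+1}{n+1}\int_{\Sigma_t} H_k\, f\, dA_t.
\]

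Next I would substitute the definition \eqref{speed} of $f$, splitting the right-hand side into
\[
\int_{\Sigma_t} \frac{H_k}{F}\langle x+\cos\theta\,\nu, e\rangle\, dA_t \;-\; \int_{\Sigma_t} H_k \langle X_e,\nu\rangle\, dA_t.
\]
The Minkowski formula \eqref{minkowski sigma_k} applied with our fixed $e$ rewrites the second integral as $\int_{\Sigma_t} H_{k-1}\langle x+\cos\theta\,\nu, e\rangle\, dA_t$. Using $F=H_n/H_{n-1}$ to put everything over a common denominator, the computation should collapse to
\[
\frac{d}{dt} W_{k,\theta}(\widehat{\Sigma_t}) = \frac{n-k+1}{n+1}\int_{\Sigma_t} \frac{H_k H_{n-1}-H_{k-1} H_n}{H_n}\,\langle x+\cos\theta\,\nu, e\rangle\, dA_t.
\]

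For $k=n$ the numerator $H_n H_{n-1}-H_{n-1}H_n$ vanishes identically, proving the preservation of $W_{n,\theta}$. For $1\le k\le n-1$, strict convexity places the principal curvatures in $\Gamma_+$, so $H_n>0$, and Proposition \ref{prop2.3} (Newton--MacLaurin) gives $H_k H_{n-1}\ge H_{k-1}H_n$, so the fraction is non-negative pointwise. The remaining and, in my view, only genuinely delicate point is the sign of the weight $\langle x+\cos\theta\,\nu, e\rangle$. On $\Sigma_0$ this positivity is exactly Proposition \ref{ellipticity}(4) for our choice $e=e_0$; along the flow it needs to be propagated, which I would expect to be handled through the spherical-cap barriers around $e_0$ (foreshadowed in the introduction and to be established alongside the long-time existence in Section \ref{sect 3}). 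Granted this positivity, the integrand is non-negative, and the monotonicity of $W_{k,\theta}$ for $1\le k\le n-1$ follows at once.
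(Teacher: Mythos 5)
Your proposal is correct and follows essentially the same route as the paper's proof: combine Theorem \ref{thm 0} (first variational formula) with the higher-order Minkowski formula \eqref{minkowski sigma_k} and the Newton--MacLaurin inequality \eqref{NM-ineq}, using the positivity of $\langle x+\cos\theta\,\nu, e\rangle$. The paper states the inequality as a pointwise comparison $H_k\frac{H_{n-1}}{H_n}\ge H_{k-1}$ under the integral rather than collapsing to the single fraction $\frac{H_kH_{n-1}-H_{k-1}H_n}{H_n}$, but this is the same computation. Your remark that the positivity of $\langle x+\cos\theta\,\nu, e\rangle$ for the \emph{fixed} $e$ chosen from $\Sigma_0$ must be propagated along the flow (via the spherical-cap barrier estimate, Corollary \ref{ellipticity of flow}) is a sharper reading than the paper's terse citation of Proposition \ref{ellipticity}(4), and is indeed how the positivity is justified for $t>0$.
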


\begin{proof}
	From  Proposition  \ref{minkowski formulae 1} and Theorem \ref{fvf}, we know that
	\begin{eqnarray*}
		\frac{d}{dt}  W_{n,\theta}(\widehat{\S_t})=\frac{1}{n+1}\int_{\S_t}  
		\Big[H_{n-1}\left(\langle x,e\rangle+\cos\theta\langle \nu,e\rangle \right)-H_n\langle X_e,\nu\rangle\Big]  = 0,
	\end{eqnarray*}
and for $1\leq k\leq n-1$,
		\begin{eqnarray*}
		\frac{d}{dt}   W_{k,\theta}(\widehat{\S_t})
&=&\frac{n+1-k}{n+1}\int_{\S_t} H_k	\Big[
		\frac{H_{n-1}}{H_{n}}\left(\langle x,e\rangle+\cos\theta\langle \nu,e\rangle \right)-\langle X_e,\nu\rangle \Big]
		\\&\geq &\frac{n+1-k}{n+1}\int_{\S_t}\Big[ H_{k-1} \left(\langle x,e\rangle+\cos\theta\langle \nu,e\rangle \right)- H_k \langle X_e,\nu\rangle\Big]=0,
	\end{eqnarray*}
where we have also used the Newton-MacLaurin inequality \eqref{NM-ineq} and Proposition Proposition \ref{ellipticity} (4).
\end{proof}
The aim of this section is the following existence and convergence result for the flow \eqref{flow with capillary}.
\begin{thm}\label{flow-result}
Let $\S_0\subset \overline{\BB}^{n+1}$ be a smooth, properly embedded,  strictly convex hypersurface with $\theta$-capillary boundary for some $\theta\in (0,\frac{\pi}{2})$, given by the embedding $x_0: \overline{\mathbb{B}}^n\to \overline{\mathbb{B}}^{n+1}$. Choose $e_0$ as in Proposition \ref{ellipticity}. Then  $x(\cdot, t)$ to the flow \eqref{flow with capillary} exists for $t\in [0,\infty)$.  Moreover, $x(\cdot, t)$ converges smoothly to the spherical cap around $a$ with $\theta$-capillary boundary, which has the same $W_{n,\theta}$ as $\S_0$.
\end{thm}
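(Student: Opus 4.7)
The plan is to carry out the standard programme for locally constrained curvature flows: short-time existence as a parabolic problem, a priori estimates leading to long-time existence, and convergence via the monotone quantities $W_{k,\theta}$ of Proposition \ref{monotone along flow}. For short-time existence, the tangential choice \eqref{choiceT} makes \eqref{flow with capillary} a nonlinear parabolic problem with an oblique Robin-type boundary condition (Proposition \ref{robin bdry of speed1}). Strict monotonicity and concavity of $F=H_n/H_{n-1}$ on $\Gamma_+$ (Proposition \ref{prop2.4}), combined with strict convexity of $\S_0$, make the operator uniformly parabolic on short time intervals. After a M\"obius transformation that sends $e$ to a pole, $\S_t$ is represented as a radial graph over a spherical cap and the flow reduces to a scalar parabolic PDE, so short-time existence on a maximal interval $[0,T^*)$ follows from standard theory.

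For the a priori estimates, the foliation $\{C_{\theta,r}(e)\}_{r>0}$ together with the limiting $C_{\theta,\infty}(e)$ consists of stationary solutions to \eqref{flow with capillary} by \eqref{static-eq}; since both the flow and the barriers satisfy the $\theta$-capillary condition, angle comparison precludes boundary touching and the interior maximum principle produces uniform $C^0$ bounds together with positive lower bounds on $\langle x+\cos\theta\,\nu,e\rangle$ and on $\langle X_e,\nu\rangle$ for all time, in the spirit of Propositions \ref{ellipticity} and \ref{star-shaped}; this also preserves the graph representation globally. The central analytic task is to keep $\S_t$ strictly convex and to bound the principal curvatures both from above and away from zero. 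I would differentiate the Weingarten tensor using Proposition \ref{basic evolution eq}, apply a tensor maximum principle exploiting the concavity of $F$, and handle the boundary via Proposition \ref{basic-capillary}(4). An auxiliary function of the form $\log\k_{\max}-A\langle x+\cos\theta\,\nu,e\rangle+B\langle X_e,\nu\rangle$ with suitable constants $A,B$ should yield the uniform upper bound on the principal curvatures, while the $1/F$ term in the speed couples in to give a uniform lower bound on $F$ and hence on every $\k_i$. The main obstacle is precisely the boundary analysis of these auxiliary functions under the oblique capillary condition, which is strictly harder than the free boundary case $\theta=\frac{\pi}{2}$ treated in \cite{SWX}; once this is in hand, Lieberman's theory for nonlinear parabolic equations with oblique boundary conditions yields uniform $C^{k,\alpha}$ estimates for every $k$, and hence $T^*=\infty$.

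For convergence, Proposition \ref{monotone along flow} gives that $W_{n,\theta}(\widehat{\S_t})$ is preserved while $W_{1,\theta}(\widehat{\S_t})$ is non-decreasing; the $C^0$ bounds make the latter bounded above, so $\frac{d}{dt}W_{1,\theta}\in L^1(0,\infty)$, and the uniform $C^\infty$ estimates upgrade this to $\frac{d}{dt}W_{1,\theta}\to 0$ as $t\to\infty$. Revisiting the proof of Proposition \ref{monotone along flow}, this forces equality in the Newton-MacLaurin inequality $H_1 H_{n-1}\geq H_0 H_n$ on any smooth subsequential limit $\S_\infty$, and Proposition \ref{prop2.3} then implies that $\S_\infty$ is umbilical. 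An umbilical strictly convex $\theta$-capillary hypersurface in $\overline{\BB}^{n+1}$ is necessarily a spherical cap $C_{\theta,r_\infty}(a)$ for some $a\in\SS^n$, with $r_\infty$ determined uniquely by the preserved value $W_{n,\theta}(\widehat{\S_0})$. Finally, the uniform smoothness promotes subsequential convergence to convergence of the whole flow, identifying the limit and completing the proof.
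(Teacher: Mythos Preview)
Your overall programme---short-time existence via scalar reduction, barriers from the foliation $C_{\theta,r}(e)$, curvature estimates, higher regularity, and convergence via monotonicity of $W_{k,\theta}$---matches the paper's. Two steps, however, are carried out differently in the paper, and in one of them your argument has a genuine gap.

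\textbf{Curvature estimates.} The paper does \emph{not} use a tensor maximum principle on $h^i_j$ or a test function built from $\log\kappa_{\max}$. Instead it isolates two scalar quantities with favourable boundary behaviour: one computes (Proposition~\ref{evol of F}) that $\nabla_\mu F=0$ along $\partial\Sigma_t$ and $\mathcal{L}F\ge 0$ modulo gradient terms, so the pure maximum principle gives $F\ge\min_{\Sigma_0}F$ and hence uniform strict convexity; only \emph{after} this does one invoke Proposition~\ref{star-shaped} to get the uniform lower bound on $\langle X_e,\nu\rangle$ (so the order of your second paragraph is inverted). For the upper bound the paper uses the test function $\log F-\alpha\langle x,e\rangle$ (not $\langle x+\cos\theta\,\nu,e\rangle$), and then bounds $H$ directly: Proposition~\ref{evol of H} shows $\nabla_\mu H\le 0$ on $\partial\Sigma_t$ (using that $F$ is concave and $\tilde h\ge 0$), so $H$ attains its maximum in the interior, where an algebraic analysis of $\mathcal{L}H$ closes the estimate. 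Your route via $\kappa_{\max}$ could in principle be pushed through, but the boundary analysis you flag as ``the main obstacle'' is precisely what the paper avoids by these choices; the clean Neumann/sign conditions on $F$ and $H$ are the key technical idea.

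\textbf{Uniqueness of the limit.} Your final sentence---that uniform smoothness ``promotes subsequential convergence to convergence of the whole flow''---is not justified: a priori the subsequential limits could be spherical caps $C_{\theta,r_\infty}(e_\infty)$ with different centres $e_\infty$ (the radius $r_\infty$ is fixed by the invariant $W_{n,\theta}$, but the centre is not). The paper addresses this with a separate barrier argument (Proposition~\ref{converge limit}): for each point define $r(\cdot,t)$ as the radius of the cap around $e$ passing through it, show that $r_{\max}(t)$ is non-increasing and $r_{\min}(t)$ non-decreasing by the avoidance principle, and then use the uniform convergence $F\to 1/r_\infty$ to force $r_{\max}(t)\to r_\infty$ and $r_{\min}(t)\to r_\infty$. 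This pins the limit to be $C_{\theta,r_\infty}(e)$, centred at the original $e=e_0$, and gives full convergence.
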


\subsection{Barriers}
First of all, we have the short time existence for the flow \eqref{flow with capillary}. This is because the strict convexity and the $\theta$-capillary boundary of $\S_0$ imply that $\S_0$ satisfies $\<X_e, \nu\>\ge \ep \cos^2\theta$ for some $\ep>0$ by Proposition \ref{star-shaped}. This allows us to transform the flow equation to be a scalar equation with oblique boundary condition, by using the M\"obius coordinate transformation see \cite{LS} or \cite{SWX}. Also from Proposition \ref{ellipticity} there exists some $\delta_0>0$ depending on $\S_0$ such that $$\langle x+\cos\theta\nu,e\rangle \ge \delta_0.$$ Thus the  scalar equation is a strictly parabolic equation. Then the short time existence follows from the standard parabolic theory. 

Let $T^*$ be the maximal time of smooth existence of a solution to \eqref{flow with capillary}. The positivity of $F$ implies that $\S_t$  is strictly convex up to $T^*$ for flow \eqref{flow with capillary}. 
From  Proposition \ref{ellipticity} (1),
It follows that  $$\S_0\subset \widehat{C_{\theta, \infty}(e)}=\{\<x, e\> > \cos\theta\}.$$
Thus there exists some $0<R_1<R_2<\infty$, such that 
$$\S_0\subset \widehat{C_{\theta, R_2}(e)}\setminus \widehat{C_{\theta, R_1}(e)}.$$
The family of $C_{\theta, r}(e)$ forms natural barriers of \eqref{flow with capillary}. 
\begin{prop}\label{barrier est} $\S_t, t\in [0, T^*)$ satisfies
	%Assume for $R_1<R_2$   satisfies\begin{equation*}
	%	\S\subset \widehat{ \mathcal{S}_{\theta,a}(R_2)}\setminus \widehat{\mathcal{S}_{\theta,a}(R_1)},
	%\end{equation*}
	%for some $R_2>R_1>0$, then this property is preserved along  flow \eqref{flow with capillary}. 
	$$\S_t\subset \widehat{C_{\theta, R_2}(e)}\setminus \widehat{C_{\theta, R_1}(e)}.$$
	In particular, there exists some $\ep_0>0$, depending only only on $R_1$  and $R_2$, such that 
	\begin{eqnarray*}
	\cos\theta+ \ep_0\leq \langle x(p,t),e\rangle \leq 1-\ep_0, \quad \forall (p,t)\in\overline{\BB}^n\times [0,T^*).
	\end{eqnarray*}
\end{prop}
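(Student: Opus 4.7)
The plan is to run a parabolic avoidance argument taking the spherical caps $\{C_{\theta,r}(e)\}_{r>0}$ as stationary barriers. First I verify that each $C_{\theta,r}(e)$ is stationary under the geometric flow \eqref{flow with capillary}. On $C_{\theta,r}(e)$ all principal curvatures equal $1/r$, hence $F = H_n/H_{n-1} = 1/r$, and combining with the identity \eqref{static-eq} the speed formula \eqref{speed} reduces to $f \equiv 0$. Since $C_{\theta,r}(e)$ already satisfies the $\theta$-capillary boundary condition and the tangential component $T$ does not affect the geometric image of the flow, each cap is a true fixed point of \eqref{flow with capillary}.

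Next I establish the inclusion $\S_t \subset \widehat{C_{\theta,R_2}(e)}\setminus \widehat{C_{\theta,R_1}(e)}$ by contradiction. Let $t_0 \in (0, T^*)$ be the first time at which it fails, say through $\S_{t_0}$ touching the outer barrier $C_{\theta,R_2}(e)$ at a point $p_0$. Since both hypersurfaces satisfy the same capillary angle condition with $\SS^n$, their tangent planes and outward unit normals automatically coincide at $p_0$ whether $p_0$ is interior or lies on $\p\S_{t_0}$, and $\S_{t_0}$ is tangent to $C_{\theta,R_2}(e)$ from inside. Hence $h_{\S_{t_0}}(p_0) \le h_{C_{\theta,R_2}(e)}(p_0) = \tfrac{1}{R_2}\,g$ as symmetric matrices, and by the monotonicity of $F$ on $\Gamma_+$ (cf. Proposition \ref{prop2.4}), $F(\S_{t_0})(p_0) \le 1/R_2$. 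Since $x$, $\nu$, $X_e$ and $\langle x+\cos\theta\,\nu, e\rangle$ all coincide at $p_0$, and the latter quantity is positive there by \eqref{static-eq} combined with $\langle X_e,\nu\rangle > 0$ on $C_{\theta,R_2}(e)$ (Proposition \ref{star-shaped}), formula \eqref{speed} yields
\begin{eqnarray*}
f(\S_{t_0})(p_0) \;\ge\; f(C_{\theta,R_2}(e))(p_0) \;=\; 0.
\end{eqnarray*}
Writing the flow as a scalar strictly parabolic equation for a radial graph in the Möbius coordinates of \cite{LS, SWX}, the difference $u := w_{\S} - w_{C}$ is nonnegative, vanishes at $(p_0, t_0)$, and satisfies a linear parabolic equation with bounded coefficients (obtained by linearizing along the segment between the two second fundamental forms). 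The strong maximum principle, together with a Hopf-type boundary lemma compatible with the oblique capillary condition, then forces $u \equiv 0$ on $[0, t_0]$, i.e.\ $\S_0 = C_{\theta,R_2}(e)$, contradicting the strict initial inclusion. A symmetric argument rules out first contact with the inner barrier $C_{\theta,R_1}(e)$.

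Finally, the explicit pointwise bound on $\langle x, e\rangle$ follows from geometry. Over $\widehat{C_{\theta,R_2}(e)}$ the function $\langle \cdot, e\rangle$ attains its minimum at the inward tip of the outer cap, with value $\sqrt{R_2^2+2R_2\cos\theta+1}-R_2 > \cos\theta$; on the other hand the point $e \in \SS^n$, where $\langle\cdot, e\rangle$ equals $1$, lies in the interior of $\widehat{C_{\theta,R_1}(e)}$ and is therefore separated from the annular region $\widehat{C_{\theta,R_2}(e)}\setminus\widehat{C_{\theta,R_1}(e)}$ by a positive distance depending only on $R_1$. Combining these two estimates produces $\ep_0>0$ depending only on $R_1, R_2$. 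The principal technical obstacle in the whole argument is the handling of boundary-contact points in the first-touching step, where the oblique capillary condition forces one to invoke a parabolic Hopf lemma in place of the purely interior strong maximum principle; the matching of tangent planes guaranteed by both surfaces sharing the same contact angle with $\SS^n$ is precisely what makes this extension applicable.
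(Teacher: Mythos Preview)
Your approach matches the paper's: both verify that the spherical caps $C_{\theta,r}(e)$ are static solutions of \eqref{flow with capillary} (via \eqref{static-eq}) and then invoke the parabolic avoidance principle with oblique boundary condition. The paper simply cites this principle from \cite{AW,WW}, whereas you sketch it by a first-touching argument followed by linearization and the strong maximum principle/Hopf lemma, which is exactly how those references argue.

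There is, however, a sign error in your intermediate curvature comparison. At a first-touch point $p_0$ with the \emph{outer} barrier $C_{\theta,R_2}(e)$, the region $\widehat{C_{\theta,R_2}(e)}$ lies on the $-\nu$ side of the cap (toward the center $ce$ of the cap-sphere), so $\Sigma_{t_0}$ is touching the sphere of radius $R_2$ from \emph{inside that sphere}. The standard tangency comparison then yields $h_{\Sigma_{t_0}}(p_0)\ge h_{C_{\theta,R_2}(e)}(p_0)=\tfrac{1}{R_2}g$, not $\le$; consequently $F_{\Sigma_{t_0}}(p_0)\ge 1/R_2$ and $f(\Sigma_{t_0})(p_0)\le 0$. (Also, Proposition~\ref{prop2.4} asserts concavity of $F$, not monotonicity; the latter is a separate elementary fact.) Fortunately this whole step is redundant: your final move---writing the difference $u=w_\Sigma-w_C$ of the scalar graph functions, noting it has a definite sign and satisfies a linear uniformly parabolic equation with a linear oblique boundary condition, and applying the strong maximum principle together with the parabolic Hopf lemma at a possible boundary contact---is precisely the content of the avoidance principle and does not use the sign of $f$ at all. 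So the argument survives once you delete or correct the superfluous curvature comparison, and note in the height-estimate paragraph that $e$ lies on $\partial\widehat{C_{\theta,R_1}(e)}$ rather than in its interior (the conclusion $\langle x,e\rangle\le 1-\ep_0$ still follows since the maximum over the closed annulus is $(1+R_1\cos\theta)/\sqrt{R_1^2+2R_1\cos\theta+1}<1$).
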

\begin{proof}
Recall that $C_{\theta, r}(e)$ satisfies \eqref{static-eq}. Thus for each $r>0$, it is a static solution to the flow \eqref{flow with capillary}. The assertion follows from the avoidance principle for strictly parabolic equation with capillary boundary condition (see \cite[Section 2.6]{AW} or \cite[Proposition 4.2]{WW}). The height estimate holds since $\langle x,e\rangle\ge \cos\theta+ \ep_0$ in $\widehat{C_{\theta, R_2}(e)}$, and  $\langle x,e\rangle\le 1- \ep_0$ in $\widehat{C_{\theta, R_1}(e)}$, for some $\ep_0>0$.
\end{proof}
We have the following direct consequence.
\begin{cor}\label{ellipticity of flow}
$\S_t, t\in [0, T^*)$ satisfies
\begin{eqnarray}
	 \left<x +\cos\theta \nu , e_0\right> \ge \ep_0.
\end{eqnarray}	\end{cor}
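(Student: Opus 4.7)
The plan is to derive the stated estimate as an immediate algebraic consequence of the height bound in Proposition \ref{barrier est}. Splitting the inner product,
\[
\langle x+\cos\theta\,\nu,\,e_0\rangle \;=\; \langle x,e_0\rangle \;+\; \cos\theta\,\langle \nu,e_0\rangle,
\]
so the only task is to dispose of the term $\cos\theta\,\langle \nu,e_0\rangle$, which may be negative but is trivially bounded below, and then to absorb it against the lower height bound on $\langle x,e_0\rangle$.

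First I would use the Cauchy--Schwarz inequality, or simply that $\nu$ and $e_0$ are unit vectors, to obtain the crude pointwise bound $\langle \nu,e_0\rangle \geq -1$ on $\Sigma_t$. Since $\theta\in(0,\tfrac{\pi}{2})$ gives $\cos\theta>0$, this yields $\cos\theta\,\langle \nu,e_0\rangle \geq -\cos\theta$. I would then invoke the lower height estimate $\langle x,e_0\rangle \geq \cos\theta+\ep_0$ from Proposition \ref{barrier est}, which holds uniformly for all $t\in[0,T^{*})$, and add:
\[
\langle x+\cos\theta\,\nu,\,e_0\rangle \;\geq\; (\cos\theta+\ep_0) - \cos\theta \;=\; \ep_0.
\]

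There is essentially no obstacle here: the enclosure of $\Sigma_t$ between the capillary spherical caps $C_{\theta,R_1}(e)$ and $C_{\theta,R_2}(e)$ already guarantees a strictly positive gap between $\langle x,e_0\rangle$ and $\cos\theta$, which comfortably absorbs the worst-case value of $\cos\theta\,\langle \nu,e_0\rangle$. This mirrors the deduction of Proposition \ref{ellipticity}(4) from (1) for the initial hypersurface; here the same elementary inequality is simply promoted uniformly along the flow via the barrier argument of the preceding proposition.
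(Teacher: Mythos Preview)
Your argument is correct and is exactly the deduction the paper intends: it states the corollary as a ``direct consequence'' of Proposition~\ref{barrier est}, and the implicit computation---identical to the one behind Proposition~\ref{ellipticity}(4)---is precisely the splitting $\langle x,e_0\rangle + \cos\theta\langle\nu,e_0\rangle \ge (\cos\theta+\ep_0) - \cos\theta = \ep_0$ that you wrote out.
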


\subsection{Evolution equations}\

In order to avoid confusion with tensor indices, we abbreviate $X=X_e$, keeping in mind that $X$ depends on $e$. %, and we make the convention that all latin indices appearing after the comma indicate the covariant derivatives, for example, $F_{,i}:=\n_{e_i} F, F_{,ij}:=\n^2F (e_i, e_j)$, etc. 
We introduce the operator
\begin{eqnarray*}%\label{linearized operator}
	&&\mathcal{L}:=\p_t -\frac{\langle x+\cos\theta  \nu, e\rangle }{F^2} F^{ij}\n^2_{ij}-\langle X-\frac{\cos\theta }{F}e +T,\n\rangle,
\end{eqnarray*}
Denote $\mathcal{F}:=\sum\limits_{i=1}^n F^i_i=\sum\limits_{i=1}^n \frac{\p F}{\p h^i_i}$.

For $F=\frac{n\sigma_n}{\sigma_{n-1}}$, use Proposition \ref{prop2.2}, we have
\begin{eqnarray}
&&	\mathcal{F}-\frac{F^{ij}h_{ij}}{F}= \mathcal{F}-1\geq 0,\label{xeq11}\\
&&\frac{F^{ij}h_i^k h_{kj}}{F^2}= 1.\label{xeq22}
\end{eqnarray}

\begin{prop}\label{evol of x-a}
	Along the flow \eqref{flow with capillary}, we have 
\begin{eqnarray*}
	\mathcal{L}(\langle x,e\rangle)& =&2F^{-1}\langle x+\cos\theta \nu,e\rangle \langle \nu,e\rangle +\cos\theta F^{-1}|e^T|^2-\langle  X,e\rangle,\end{eqnarray*}
	where $e^T$ is the tangential projection of $e$ on $\Sigma_t$.\end{prop}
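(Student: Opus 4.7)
The plan is to reduce the statement to a direct calculation using three computations: the tangential gradient, the Hessian, and the time derivative of $u:=\langle x,e\rangle$, then to assemble $\mathcal{L}u$ and simplify using the homogeneity of $F$ and the definition of the speed $f$.

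First I would compute $\nabla u$. For any tangent vector $e_i$ we have $e_i(u)=\langle D_{e_i}x,e\rangle=\langle e_i,e\rangle$, so as a tangent vector $\nabla u = e^{T}$, the tangential projection of $e$ onto $\Sigma_t$. Differentiating once more and using the Gauss formula $D_{e_i}e_j=\nabla_{e_i}e_j - h_{ij}\nu$ yields
\begin{eqnarray*}
\nabla^{2}_{ij}u = e_i\langle e_j,e\rangle - \langle \nabla_{e_i}e_j,e\rangle = \langle D_{e_i}e_j,e\rangle - \langle \nabla_{e_i}e_j,e\rangle = -h_{ij}\langle \nu,e\rangle.
\end{eqnarray*}
This is the only slightly non-trivial piece and is where the second fundamental form enters.

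Next I would compute $\partial_t u = \langle \partial_t x,e\rangle = f\langle \nu,e\rangle + \langle T,e\rangle$. Now assemble
\begin{eqnarray*}
\mathcal{L}u = \partial_t u \;-\;\frac{\langle x+\cos\theta\,\nu,e\rangle}{F^{2}}F^{ij}\nabla^{2}_{ij}u \;-\; \bigl\langle X - \tfrac{\cos\theta}{F}e + T,\nabla u\bigr\rangle.
\end{eqnarray*}
The Hessian term becomes $\tfrac{\langle x+\cos\theta\nu,e\rangle\langle \nu,e\rangle}{F^{2}}F^{ij}h_{ij} = \tfrac{\langle x+\cos\theta\nu,e\rangle\langle \nu,e\rangle}{F}$, using $F^{ij}h_{ij}=F$ (Euler's relation, since $F$ is homogeneous of degree one). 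For the drift term I note that $\langle Y,\nabla u\rangle = \langle Y^{T},e\rangle$ for any ambient vector $Y$, and decompose $Y^{T}=Y-\langle Y,\nu\rangle\nu$, giving
\begin{eqnarray*}
\langle X,\nabla u\rangle = \langle X,e\rangle-\langle X,\nu\rangle\langle \nu,e\rangle,\qquad \langle e,\nabla u\rangle = |e^{T}|^{2},\qquad \langle T,\nabla u\rangle=\langle T,e\rangle.
\end{eqnarray*}
Therefore
\begin{eqnarray*}
\mathcal{L}u = f\langle \nu,e\rangle + \langle T,e\rangle + \tfrac{\langle x+\cos\theta\nu,e\rangle\langle \nu,e\rangle}{F} - \langle X,e\rangle + \langle X,\nu\rangle\langle \nu,e\rangle + \tfrac{\cos\theta}{F}|e^{T}|^{2} - \langle T,e\rangle.
\end{eqnarray*}

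Finally I would substitute $f=\tfrac{\langle x+\cos\theta\nu,e\rangle}{F}-\langle X,\nu\rangle$ from \eqref{speed}. The $\langle X,\nu\rangle\langle \nu,e\rangle$ terms cancel and the $\langle T,e\rangle$ terms cancel, leaving exactly
\begin{eqnarray*}
\mathcal{L}u = \tfrac{2}{F}\langle x+\cos\theta\nu,e\rangle\langle \nu,e\rangle + \tfrac{\cos\theta}{F}|e^{T}|^{2} - \langle X,e\rangle,
\end{eqnarray*}
as claimed. There is no real obstacle here; the computation is algebraic once the Hessian identity $\nabla^{2}_{ij}\langle x,e\rangle=-h_{ij}\langle \nu,e\rangle$ is in hand, and the main point to watch is bookkeeping the tangential projections so that the tangential part $T$ and the extra $-\tfrac{\cos\theta}{F}e$ in the drift combine cleanly with the speed $f$.
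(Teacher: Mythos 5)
Your proposal is correct and follows essentially the same route as the paper: compute $\nabla\langle x,e\rangle=e^T$, $\nabla^2_{ij}\langle x,e\rangle=-h_{ij}\langle\nu,e\rangle$, and $\partial_t\langle x,e\rangle=f\langle\nu,e\rangle+\langle T,e\rangle$, then assemble $\mathcal{L}$ using $F^{ij}h_{ij}=F$ and cancel the $\langle X,\nu\rangle\langle\nu,e\rangle$ and $\langle T,e\rangle$ terms after substituting the speed $f$. The only cosmetic difference is that you spell out the decomposition $\langle X,\nabla u\rangle=\langle X,e\rangle-\langle X,\nu\rangle\langle\nu,e\rangle$ explicitly, which the paper leaves implicit.
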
 
\begin{proof}
\begin{eqnarray*}
\p_t\langle  x,e\rangle= \left(\frac{\langle x+\cos\theta \nu,e\rangle }{F}-\langle X,\nu\rangle \right)\langle \nu,e\rangle +\<T, e\>.
\end{eqnarray*}
\begin{eqnarray*}
F^{ij}\n^2_{ij} \langle x,e\rangle= -F^{ij}h_{ij}\langle  \nu,e\rangle= -F\langle  \nu,e\rangle.
\end{eqnarray*}
Thus 
\begin{eqnarray*}
\mathcal{L}(\langle x,e\rangle)& =&\langle \p_t x,e\rangle  -\frac{\langle x +\cos\theta   \nu,e\rangle }{F^2} F^{ij}\n^2_{ij} \langle x,e\rangle -\left< X-\frac{\cos\theta }{F}e+T,\n \langle x,e\rangle \right>
	\\&=&2F^{-1}\langle x+\cos\theta \nu,e\rangle \langle \nu,e\rangle +\cos\theta F^{-1}|e^T|^2-\langle  X,e\rangle.
\end{eqnarray*}
\end{proof}

\begin{prop}\label{evol of F}
	Along the flow \eqref{flow with capillary}, we have 
\begin{eqnarray}\label{evol-F}
	\mathcal{L} F& =&\frac{2F^{ij}}{F^2} \langle x +\cos\theta   \nu,e\rangle_{;i} F_{;j}\nonumber
	\\&&- \frac{2  \langle x  +\cos\theta  \nu,e\rangle  }{F^3} F^{ij} F_{;i}F_{;j}	+(1-\mathcal{F})\langle \nu,e\rangle.
\end{eqnarray}
and \begin{eqnarray}\label{neumann bdry of F}
\nabla_\mu F=0\hbox{ along }\p\S_t.
\end{eqnarray}
\end{prop}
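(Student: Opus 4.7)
The plan is to compute $\mathcal{L}F$ directly from Proposition \ref{basic evolution eq}(7), which gives $\partial_t F = -F^{ij}\nabla_i\nabla_j f - fF^{ij}h^k_i h_{kj} + \nabla_T F$; the tangential piece $\nabla_T F$ cancels against $-\langle T, \nabla F\rangle$ in the operator $\mathcal{L}$. Writing $f = \phi/F - \langle X, \nu\rangle$ with $\phi := \langle x+\cos\theta\nu, e\rangle$ and expanding $F^{ij}\nabla_i\nabla_j(\phi/F)$ by the quotient rule produces the two gradient expressions in the claim, together with a $\phi F^{ij}F_{;ij}/F^2$ contribution that exactly cancels the corresponding piece in $\mathcal{L}$. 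Using $fF^{ij}h^k_i h_{kj} = fF^2$ from \eqref{xeq22} handles the remaining zeroth-order term. What is left is to evaluate $F^{ij}\nabla_i\nabla_j\phi$ and $F^{ij}\nabla_i\nabla_j\langle X, \nu\rangle$.

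The first is immediate: $F^{ij}\nabla_i\nabla_j\langle x, e\rangle = -F\langle\nu,e\rangle$ as in the proof of Proposition \ref{evol of x-a}, and $F^{ij}\nabla_i\nabla_j\langle \nu, e\rangle = \langle \nabla F, e\rangle - F^2\langle \nu, e\rangle$ by Codazzi together with \eqref{xeq22}. The central computation is $F^{ij}\nabla_i\nabla_j\langle X, \nu\rangle$. First one derives
$$\nabla_i\langle X, \nu\rangle = h^k_i X^T_k + \langle e, e_i\rangle\langle x, \nu\rangle - \langle x, e_i\rangle\langle e, \nu\rangle,$$
and then differentiates again, using the elementary identities $\nabla_j\langle x, e_k\rangle = \delta_{jk} - h_{jk}\langle x, \nu\rangle$ and $\nabla_j\langle e, e_k\rangle = -h_{jk}\langle e, \nu\rangle$. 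Contracting with $F^{ij}$, invoking Codazzi to convert $F^{ij}h^k_{i;j}$ into $\nabla_k F$, and using \eqref{xeq22}, gives
$$F^{ij}\nabla_i\nabla_j\langle X, \nu\rangle = 2F^{ij}\bigl(\langle e, e_i\rangle a_j - \langle x, e_i\rangle b_j\bigr) - \mathcal{F}\langle\nu, e\rangle + \langle \nabla F, X\rangle + \langle x, e\rangle F - F^2\langle X, \nu\rangle,$$
where $a_j = h^k_j \langle x, e_k\rangle$ and $b_j = h^k_j\langle e, e_k\rangle$. The crucial observation is that the first term vanishes: because $F$ is symmetric in the principal curvatures, $F^{ij}$ and $h^k_j$ are simultaneously diagonal in any eigenframe of $h$, so $F^{ij}h^k_j$ is symmetric under the exchange of the tangential components of the constant vectors $e$ and $x$. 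Substituting back, the $F^2\langle X, \nu\rangle$-terms, the $\cos\theta F^{-1}\langle \nabla F, e\rangle$-terms, and the $\langle \nabla F, X\rangle$-terms cancel in pairs, and the residue simplifies via $\phi = \langle x, e\rangle + \cos\theta\langle \nu, e\rangle$ to the right-hand side of \eqref{evol-F}.

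For the boundary condition \eqref{neumann bdry of F}, the plan is to show that on $\partial\Sigma_t$ both $\phi$ and $\langle X, \nu\rangle$ satisfy individually the same Robin identity as $f$ in Proposition \ref{robin bdry of speed1}. On $\partial\Sigma_t$ one has $|x|=1$, $\langle x, \nu\rangle = -\cos\theta$, $\langle x, \mu\rangle = \sin\theta$, and $\mu = (x+\cos\theta\nu)/\sin\theta$, so $\langle\mu, e\rangle = \phi/\sin\theta$. Since $\mu$ is a principal direction by Proposition \ref{basic-capillary}(1), $\nabla_\mu\langle\nu, e\rangle = h_{\mu\mu}\langle\mu, e\rangle$, giving
$$\nabla_\mu\phi = (1+\cos\theta h_{\mu\mu})\langle\mu, e\rangle = \phi\cdot\frac{1+\cos\theta h_{\mu\mu}}{\sin\theta}.$$
Evaluating the first-derivative formula for $\langle X, \nu\rangle$ at $i=\mu$ and simplifying with the boundary identities yields, after collecting coefficients of $\langle x, e\rangle$ and $\langle e, \nu\rangle$ and using $\sin^2\theta + \cos^2\theta = 1$,
$$\nabla_\mu\langle X, \nu\rangle = \langle X, \nu\rangle\cdot\frac{1+\cos\theta h_{\mu\mu}}{\sin\theta}.$$
Substituting these into $\nabla_\mu f = \nabla_\mu\phi/F - \phi\nabla_\mu F/F^2 - \nabla_\mu\langle X, \nu\rangle$ and comparing with Proposition \ref{robin bdry of speed1} forces $\phi\,\nabla_\mu F/F^2 = 0$. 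Since $\phi\geq \delta_0 > 0$ by Proposition \ref{ellipticity}(4), we conclude $\nabla_\mu F = 0$.

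The main technical obstacle is the cancellation of the cross term $2F^{ij}(\langle e, e_i\rangle a_j - \langle x, e_i\rangle b_j)$ in the interior computation. Although not apparent from the index expressions, it reflects the commutation of $F^{ij}$ and $h^k_i$ as matrices, a feature special to curvature functions symmetric in the principal curvatures; without it the evolution formula would acquire spurious first-order gradient terms that would obstruct the later maximum-principle and barrier arguments.
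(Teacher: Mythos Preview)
Your proof is correct and follows essentially the same approach as the paper: both start from Proposition~\ref{basic evolution eq}(7), expand $F^{ij}f_{;ij}$ via the quotient rule, and rely on the commutation of $F^{ij}$ with $h^k_j$ to kill the cross terms in $F^{ij}\langle X,\nu\rangle_{;ij}$ (the paper records this as formula~\eqref{x nu} and silently drops those terms upon contraction). For the boundary identity the paper simply quotes the two Robin relations for $\langle X,\nu\rangle$ and $\langle x+\cos\theta\nu,e\rangle$ from \cite[Proposition~3.3]{WX1}, whereas you rederive them, which is fine. One small correction: Proposition~\ref{ellipticity}(4) only gives $\phi\ge\delta_0$ on the initial hypersurface $\Sigma_0$ with its own choice of $e_0$; for $\phi>0$ along the flow with the fixed $e$ you should cite Corollary~\ref{ellipticity of flow} instead.
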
 

 \begin{proof}
From Proposition \ref{basic evolution eq}, we have
\begin{eqnarray}\label{p t F} 
			\p_t F =-F^{ij}f_{;ij}-fF^{ij}h_{i}^kh_{kj} +\n_T F.
\end{eqnarray} 
Next we compute the first term in the right hand side above.  
\begin{eqnarray*}
 -F^{ij}	f_{;ij}&=&-F^{ij}\left(\frac{\langle x+\cos\theta \nu,e\rangle }{F}-\langle X,\nu\rangle\right)_{;ij}
	\\&=&\frac{\langle x,e\rangle }{F^2}F^{ij}F_{;ij} +  \frac{ 2F^{ij}\langle x,e\rangle_{;i} F_{;j}}{F^2}-\frac{2\langle x,e\rangle }{F^3} F^{ij}F_{;i}F_{;j}  +\frac{F^{ij}h_{ij}}{F} \langle \nu,e\rangle\\&&+ 
	\frac{\cos\theta \langle \nu,e\rangle }{F^2} F^{ij}F_{;ij}+\cos\theta  \frac{2F^{ij}\langle \nu,e\rangle_{;i} F_{;j}}{F^2}  \\&&-\cos\theta \frac{2\langle \nu,e\rangle }{F^3} F^{ij}F_{;i}F_{;j} +\left( \cos\theta  \frac{F^{ij}h^k_ih_{kj} \langle \nu,e\rangle }{F}-\cos\theta \frac{F_{;k}\langle e_k,e\rangle }{F} \right)\\&& +F^{ij}\langle X,\nu\rangle_{;ij}.
\end{eqnarray*}
Similarly, by conducting a direct computation, (one can also refer to \cite[Lemma 3.3]{SWX}), we know that
\begin{eqnarray}
		\langle X,\nu\rangle_{;ij} &=&h_{i;j}^k \langle X,e_k\rangle +\langle x,e\rangle h_{ij}-h^k_ih_{kj} \langle X,\nu\rangle -\langle e,\nu\rangle g_{ij} \nonumber\\&&+h^k_j\left( \langle e_i,e\rangle \langle x,e_k\rangle -\langle e_i,x\rangle \langle e,e_k\rangle \right)
		 \nonumber\\&&+h_i^k \left(\langle e_j,e\rangle \langle x,e_k\rangle-\langle e_j,x\rangle \langle e,e_k\rangle \right),\label{x nu}
\end{eqnarray}
which yields that
\begin{eqnarray*}
F^{ij}\langle X,\nu\rangle_{;ij}
	&=&F^{ij}\left(h_{i;j}^k \langle X,e_k\rangle +h_{ij}\langle x,e\rangle-g_{ij}\langle \nu,e\rangle -h^k_ih_{kj} \langle X,\nu\rangle \right)
	\\&=&F_{;k}\langle X,e_k\rangle +\left(F\langle x,e\rangle -\mathcal{F}\langle \nu,e\rangle\right) -F^{ij}h_i^kh_{kj}\langle X,\nu\rangle .%	\\&:=A_{31}+(A_{321}+A_{322})+A_{33},
\end{eqnarray*}
And we have
\begin{eqnarray*}
&& -f F^{ij}h^k_ih_{kj}= -\left(\frac{\langle x+\cos\theta \nu,e\rangle }{F}-\langle X,\nu\rangle\right) F^{ij}h^k_ih_{kj}.
\end{eqnarray*}
Pluging those terms into equation \eqref{p t F}, after simplifications,   we conclude that
\begin{eqnarray*}
	\p_t F&=&\Big[ \frac{\langle x+\cos\theta \nu ,e\rangle }{F^2}F^{ij}F_{;ij}+F_{;k}\langle X,e_k\rangle \Big]+\frac{2F^{ij}F_{;j}}{F^2}\left( \langle x,e\rangle_{;i}+\cos\theta   \<\nu,e\>_{;i}\right)
	\\&& - \frac{2  \langle x +\cos\theta  \nu,e\rangle }{F^3} F^{ij} F_{;i}F_{;j}-\cos\theta \frac{F_{;k}\langle e_k,e\rangle }{F}+\left(\frac{F^{ij}h_{ij}}{F}-\mathcal{F}\right)\langle \nu,e\rangle\\&& +F\langle x,e\rangle \left(1-\frac{F^{ij}h_i^k h_{kj}}{F^2}\right)+\n_T F,
\end{eqnarray*}
therefore, we obtain 
\begin{eqnarray*}
	\mathcal{L} F& =&\frac{2F^{ij}}{F^2}\langle x +\cos\theta   \nu,e\rangle_{;i} F_{;j}
	- \frac{2 \langle x +\cos\theta   \nu,e\rangle  }{F^3} F^{ij} F_{;i}F_{;j}	\\&&+\left(\frac{F^{ij}h_{ij}}{F}-\mathcal{F}\right)\langle \nu,e\rangle  +F\langle x,e\rangle \left(1-\frac{F^{ij}h_i^k h_{kj}}{F^2}\right).
\end{eqnarray*}
 In view of \eqref{xeq11} and \eqref{xeq22}, we complete the proof of the evolution equation.
 
 Next we prove $\nabla_\mu F=0$.  From Proposition \ref{robin bdry of speed1}, we have along $\p\S_t$, 
 $$\n_\mu f= \left(\frac{1}{\sin\theta} +\cot\theta h_{\mu\mu}\right) f.$$
 From \cite[Proposition 3.3]{WX1}, we know along $\p\S_t$, 
\begin{eqnarray*}
	\n_\mu \langle X,\nu\rangle =\left(\frac{1}{\sin\theta} +\cot\theta h_{\mu\mu}\right)\langle X,\nu\rangle,
\end{eqnarray*}
and
\begin{eqnarray*}
	\n_\mu \langle x+\cos\theta \nu,e\rangle =\left(\frac{1}{\sin\theta} +\cot\theta h_{\mu\mu}\right) \langle x+\cos\theta \nu,e\rangle.
\end{eqnarray*}
From the definition \eqref{speed} of $f$, we conclude that $\nabla_\mu F=0$.
\end{proof}
\begin{prop}\label{evol of H} 	Along the flow \eqref{flow with capillary},  we have 
	\begin{eqnarray}\label{evol-H} 
		\mathcal{L}H&=&	H \langle x,e\rangle +\left(HF^{-1}-n+\cos\theta |h|^2 F^{-1}\right)\langle \nu,e\rangle\nonumber
		\\&&+ \langle x+\cos\theta \nu,e\rangle \left( H -2|h|^2 F^{-1}-2F^{-3} |\n F|^2\right)\nonumber\\&& +  2F^{-2}\langle \n F,e\rangle+2\cos\theta F^{-2}g^{ij}F_{;j} h_{i}^k\langle e_k,e\rangle \nonumber
		\\&& +F^{-2}   \langle x+\cos\theta \nu ,e\rangle g^{ij}h_{kl;i}h_{st;j}\frac{\p^2 F}{\p h_{kl}\p h_{st}},
\end{eqnarray}
and \begin{eqnarray}\label{boundaryH}
\nabla_\mu H\le 0\hbox{ along }\p\S_t.
\end{eqnarray}

\end{prop}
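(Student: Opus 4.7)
The plan is to combine Proposition~\ref{basic evolution eq}(6), which gives $\partial_t H=-\Delta f-|h|^2 f+\nabla_T H$, with the definition of $\mathcal L$; the tangential drift $\nabla_T H$ cancels, leaving
$$\mathcal L H=-\Delta f-|h|^2 f-\frac{\psi}{F^2}F^{ij}\nabla^2_{ij}H-\left\langle X-\tfrac{\cos\theta}{F}e,\nabla H\right\rangle,\qquad \psi:=\langle x+\cos\theta\,\nu,e\rangle.$$
Writing $f=\psi/F-\langle X,\nu\rangle$ and expanding $\Delta f$ by the product rule reduces the task to computing $\Delta\psi$, $\Delta\langle X,\nu\rangle$, $\Delta F$, together with the cross terms $\langle\nabla\psi,\nabla F\rangle$ and $|\nabla F|^2$.

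For the first two I would use standard identities: Gauss--Weingarten plus Codazzi yields $\Delta\langle x,e\rangle=-H\langle\nu,e\rangle$ and $\Delta\langle\nu,e\rangle=\langle e,\nabla H\rangle-|h|^2\langle\nu,e\rangle$; tracing identity \eqref{x nu} gives, after observing that the bilinear cross term $h^{ik}[\langle e_i,e\rangle\langle x,e_k\rangle-\langle e_i,x\rangle\langle e,e_k\rangle]$ is antisymmetric in $(i,k)$ while $h^{ik}$ is symmetric (so it pairs to zero),
$$\Delta\langle X,\nu\rangle=\langle X,\nabla H\rangle+H\langle x,e\rangle-|h|^2\langle X,\nu\rangle-n\langle\nu,e\rangle.$$
For $\Delta F$, the Simons identity $g^{ij}h_{kl;ij}=H_{;kl}+H h_{km}h^m_l-|h|^2 h_{kl}$ combined with the homogeneity relation $F^{ij}h_{ij}=F$ (Euler) and with \eqref{xeq22} (giving $F^{ij}h_i^k h_{kj}=F^2$) produces
$$\Delta F=F^{ij}\nabla^2_{ij}H+HF^2-|h|^2 F+\frac{\partial^2 F}{\partial h_{kl}\,\partial h_{rs}}\,g^{ij}h_{kl;i}h_{rs;j}.$$
Substituting back, the $\langle e,\nabla H\rangle$ contribution from $\Delta\psi$ cancels the drift piece $\frac{\cos\theta}{F}\langle e,\nabla H\rangle$, the $\langle X,\nabla H\rangle$ contribution from $\Delta\phi$ cancels the $\langle X,\nabla H\rangle$ in the drift, the $|h|^2\langle X,\nu\rangle$ pieces cancel, and the $\frac{\psi}{F^2}F^{ij}\nabla^2_{ij}H$ piece in $\psi\Delta F/F^2$ absorbs the explicit second-order term in $\mathcal L$; grouping the remaining terms yields \eqref{evol-H}.

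For the boundary inequality I would work at $p\in\partial\Sigma_t$ in an orthonormal frame $\{e_\alpha\}_{\alpha=1}^{n-1}$ of $T(\partial\Sigma_t)$ which diagonalizes $h|_{T(\partial\Sigma_t)}$; this is consistent with $\mu$ being a principal direction (Proposition~\ref{basic-capillary}(1)). By Proposition~\ref{basic-capillary}(3) the shape operator $\tilde h$ is then simultaneously diagonal with $\tilde h_{\alpha\alpha}=\frac{1}{\sin\theta}+\cot\theta\,h_{\alpha\alpha}$, and Proposition~\ref{basic-capillary}(4) collapses to $h_{\alpha\alpha;\mu}=\tilde h_{\alpha\alpha}(h_{\mu\mu}-h_{\alpha\alpha})$, with $h_{\alpha\beta;\mu}=0$ for $\alpha\ne\beta$. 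Since $F$ is isotropic, $F^{ij}$ is simultaneously diagonal, so the Neumann condition \eqref{neumann bdry of F} reads $F^{\mu\mu}h_{\mu\mu;\mu}+\sum_\alpha F^{\alpha\alpha}h_{\alpha\alpha;\mu}=0$, which I solve for $h_{\mu\mu;\mu}$; summing gives
$$\nabla_\mu H=h_{\mu\mu;\mu}+\sum_{\alpha=1}^{n-1} h_{\alpha\alpha;\mu}=\frac{1}{F^{\mu\mu}}\sum_{\alpha=1}^{n-1}\tilde h_{\alpha\alpha}(h_{\mu\mu}-h_{\alpha\alpha})(F^{\mu\mu}-F^{\alpha\alpha}).$$
Strict convexity together with $\theta\in(0,\pi/2]$ makes $\tilde h_{\alpha\alpha}>0$ and $F^{\mu\mu}>0$, and the concavity of $F$ on $\Gamma_+$ (Proposition~\ref{prop2.4}) gives the standard ordering inequality $(\kappa_i-\kappa_j)(F^i-F^j)\le 0$ for symmetric concave $F$; hence each summand is non-positive, proving \eqref{boundaryH}.

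The main obstacle I expect is the interior bookkeeping: arranging the Simons-type reduction of $\Delta F$ and checking that every $\nabla H$ and every $\langle X,\nu\rangle$ contribution cancels cleanly against the drift is where errors are easiest to make. The boundary estimate is short but relies essentially on the simultaneous diagonalization of $h$ and $\tilde h$ together with the concavity-ordering inequality for $F=n\sigma_n/\sigma_{n-1}$.
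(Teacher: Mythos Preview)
Your proposal is correct and follows essentially the same route as the paper: for \eqref{evol-H} you start from $\partial_t H=-\Delta f-|h|^2 f+\nabla_T H$, compute $\Delta\langle X,\nu\rangle$ by tracing \eqref{x nu}, $\Delta\psi$ from the standard Gauss--Weingarten identities, and $\Delta F$ via the Simons identity (using \eqref{xeq22} to rewrite $F^{kl}h_{la}h^a_k=F^2$), then track the cancellations exactly as the paper does; for \eqref{boundaryH} you diagonalize $h$ on $T(\partial\Sigma_t)$, use Proposition~\ref{basic-capillary}(4) and \eqref{neumann bdry of F} to eliminate $h_{\mu\mu;\mu}$, and conclude from the concavity ordering inequality for $F$ and the sign of $\tilde h_{\alpha\alpha}$, which is precisely the paper's argument.
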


\begin{proof}
By Proposition \ref{basic evolution eq}, we have
	\begin{eqnarray}\label{p_t H}
			\p_t H&=&-\Delta  f-|h|^2 f+ \n_T H\nonumber
		=-\Delta  \Big[\frac{\langle x +\cos\theta   \nu,e\rangle }{F}- \langle X_e,\nu\rangle\Big]\nonumber
		\\&&-|h|^2\left(\frac{\langle x +\cos\theta  \nu,e\rangle }{F}- \langle X_e,\nu\rangle\right)+\n_T H.
	\end{eqnarray}Note that, from equation \eqref{x nu}, it holds
	\begin{eqnarray*}
\Delta   \langle X,\nu\rangle  = H_{;k}\langle X,e_k\rangle +\langle x,e\rangle H-|h|^2 \langle X,\nu\rangle -n \langle \nu,e\rangle  ,
	\end{eqnarray*}
and
	\begin{eqnarray*}
-\Delta \left(\frac{\langle x+\cos\theta \nu, e\rangle }{F}\right)%\\&=&- \left(\frac{\Delta \langle x+\cos\theta \nu,a\rangle}{F}-\frac{2\<\nabla\langle x+\cos\theta\nu,a\rangle, \nabla F}{F^2}-\frac{\langle x+\cos\theta \nu,a\rangle \Delta F}{F^2}  +\frac{2\langle x+\cos\theta \nu,a\rangle |\nabla F|^2}{F^3}\right)
		&=&\frac{H}{F}\langle \nu,e\rangle -\frac{\cos\theta}{F}\left(\langle \n H, e\rangle-|h|^2 \langle \nu,e\rangle \right) \\&&+2F^{-2}\left(\langle \n F,e\rangle+\cos\theta g^{ij}F_{;j} h_{i}^k\langle e_k,e\rangle \right)\\&&+\Delta F \cdot  F^{-2}  \langle x+\cos\theta \nu ,e\rangle -2F^{-3}   \langle x+\cos\theta \nu,e\rangle |\n F|^2.
	\end{eqnarray*}Next we  compute the term containing $\Delta F $. 
Due to the Codazzi equations, the Ricci identities and the Gauss equation, there holds (see for example \cite{SWX})
\begin{eqnarray*}
h_{kl;ij}=h_{ij;kl}+(h_{la}h_{jk}-h_{lk}h_{ja})h_{i}^a+(h_{la}h_{ji}-h_{li}h_{ja})h_{k}^a,
\end{eqnarray*}
and thus
\begin{eqnarray*}
g^{ij}F^{kl}h_{kl;ij}=F^{kl}H_{;kl}+F^{kl}h_{la}h^{a}_{k}H-F|h|^{2}.
\end{eqnarray*}
It follows that
	\begin{eqnarray*}
		\Delta F&=&g^{ij}\left(\frac{\p^2 F}{\p h_{kl}\p h_{st}} h_{kl;i}h_{st;j}+\frac{\p F}{\p h_{kl}}h_{kl;ij}\right)
	\\&=&F^{kl}H_{;kl}+HF^{kl}h_{la}h^{a}_{k}-F|h|^2+g^{ij}h_{kl;i}h_{st;j}\frac{\p^2 F}{\p h_{kl}\p h_{st}}.
	\end{eqnarray*}
By adding above terms together into equation \eqref{p_t H}, we get the evolution equation for $H$.

Next we prove $\nabla_\mu H\le 0$ on $\p\S_t$. Let $\{e_{\a}\}_{\a=1}^{n-1}$ be an orthonormal frame of $T(\p \S_t)$, such that the second fundamental form of $\S_t$ is diagonal with respect to $\{e_{\a},\mu \}_{\a=1}^{n-1}$.
From  \eqref{neumann bdry of F}, we have $$0=\n_\mu F=F^{\mu\mu}h_{\mu\mu;\mu}+\sum\limits_{\alpha=1}^{n-1}F^{\alpha\alpha}h_{\alpha\alpha;\mu}.$$ It follows that
\begin{eqnarray*}
	\n_\mu H&=&h_{\mu\mu;\mu}+\sum_{\alpha=1}^{n-1}h_{\alpha\alpha;\mu}= - \sum_{\alpha=1}^{n-1} \frac{F^{\alpha\alpha}}{F^{\mu\mu}}h_{\alpha\alpha;\mu}+\sum_{\alpha=1}^{n-1} h_{\alpha\alpha;\mu}
%\\&=& \sum_{\alpha=1}^{n-1} \frac{1}{F^{\mu\mu}}\big( F^{\mu\mu}-F^{\alpha\alpha}\big)h_{\alpha\alpha;\mu}
	\\&=& \sum_{\alpha=1}^{n-1}  \frac{1}{F^{\mu\mu}}\big( F^{\mu\mu}-F^{\alpha\alpha}\big)(h_{\mu\mu}-h_{\alpha\alpha})\tilde{h}_{\a\a}.
\end{eqnarray*}
In the last equality we have used Proposition \ref{basic-capillary} (4). 
Since $F$ is a concave function, 
$$\big(F^{\mu\mu}-F^{\alpha\alpha}\big)(h_{\mu\mu}-h_{\alpha\alpha})\le 0, \quad \forall \a.$$
Also, $\tilde{h}_{\a\a}\ge 0$ due to Corollary \ref{convexity}.
It follows that 
\begin{eqnarray*}
	&&\n_\mu H= \sum_{\alpha=1}^{n-1}  \frac{1}{F^{\mu\mu}}\big( F^{\mu\mu}-F^{\alpha\alpha}\big)(h_{\mu\mu}-h_{\alpha\alpha})\tilde{h}_{\a\a}\le 0.
\end{eqnarray*}

\end{proof}

\subsection{Curvature estimates}\	

 First we have the uniform lower bound of $F$.
\begin{prop}
	Along the flow \eqref{flow with capillary}, it holds
	\begin{eqnarray*}
		F(p ,t)\geq \min_{\overline{\mathbb{B}}^n} F(\cdot, 0), \quad \forall (p,t)\in \overline{\mathbb{B}}^n\times [0,T^*).
	\end{eqnarray*}
\end{prop}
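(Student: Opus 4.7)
The plan is to prove this bound via a parabolic minimum principle applied to $F$ itself, using the evolution equation \eqref{evol-F} and the oblique boundary condition \eqref{neumann bdry of F}.

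First I would consider a spatial interior minimum of $F(\cdot, t)$. At such a point $p_0$, we have $\nabla F(p_0) = 0$, so the two gradient-type terms on the right-hand side of \eqref{evol-F} vanish, leaving
\begin{equation*}
\mathcal{L} F(p_0) = (1 - \mathcal{F}) \langle \nu, e\rangle.
\end{equation*}
On the other hand, at an interior minimum $\nabla^2 F \ge 0$, and since $\Sigma_t$ is strictly convex, $F^{ij}$ is positive definite, so $F^{ij}\nabla^2_{ij} F \ge 0$. Combined with $\langle x+\cos\theta\nu,e\rangle > 0$ along the flow (Corollary \ref{ellipticity of flow}), this gives
\begin{equation*}
\partial_t F(p_0) \ge \mathcal{L} F(p_0) = (1-\mathcal{F}) \langle \nu, e\rangle.
\end{equation*}

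The next step is to check that the right-hand side is non-negative. From \eqref{xeq11} we have $\mathcal{F} \ge 1$, so $1 - \mathcal{F} \le 0$; therefore it suffices to verify that $\langle \nu, e\rangle \le 0$ along the flow. For the initial surface this follows from Proposition \ref{ellipticity}(2), and my plan is to argue that it persists: the barrier estimate (Proposition \ref{barrier est}) forces $\partial\Sigma_t$ to enclose $e$ inside $\SS^n$, so that $e$ remains inside $\widehat{\partial \Sigma_t}$, and the strict convexity of $\Sigma_t$ together with the $\theta$-capillary contact (so that an argument in the spirit of Proposition \ref{ellipticity}(2) can be rerun with the same $e$) yields $\langle \nu, e\rangle \le 0$ throughout $[0, T^*)$. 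This step is the main obstacle, since one has to track a pointwise sign along a nonlinear flow using only the geometric hypotheses preserved by it.

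For the boundary case, if the spatial minimum of $F$ is attained at a point $p_0 \in \partial \Sigma_t$, then by Hopf's lemma one would need $\nabla_\mu F(p_0) < 0$ at a strict minimum. But the Neumann-type condition $\nabla_\mu F = 0$ from \eqref{neumann bdry of F} rules this out; hence a genuine decrease of $F$ cannot happen at boundary points either. Combining the two cases, the quantity $F_{\min}(t) := \min_{\overline{\BB}^n} F(\cdot, t)$ is non-decreasing in $t$ (a standard ODE comparison made rigorous by perturbing $F$ by $\eps e^{Ct}$ if one wants to avoid technicalities at the minimum), which gives the desired bound $F(p,t) \ge F_{\min}(0) = \min_{\overline{\BB}^n} F(\cdot, 0)$ for all $(p,t) \in \overline{\BB}^n \times [0, T^*)$.
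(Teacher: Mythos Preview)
Your proposal is correct and follows essentially the same route as the paper: show $\mathcal{L} F \ge 0$ modulo $\nabla F$ using \eqref{xeq11} together with $\langle \nu, e\rangle \le 0$, then invoke the maximum principle with the Neumann condition \eqref{neumann bdry of F}. Your explicit plan for the persistence of $\langle \nu, e\rangle \le 0$ along the flow---the barrier estimate keeps $e\in\mathrm{int}(\widehat{\partial\Sigma_t})$ and $\langle x,e\rangle>\cos\theta$, so the proof of Proposition~\ref{ellipticity}(2) reruns for each $\Sigma_t$ with the fixed $e$---is precisely what the paper's bare citation of Proposition~\ref{ellipticity}(2) is relying on.
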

\begin{proof}

From Proposition \ref{evol of F},  \eqref{xeq11} and Proposition \ref{ellipticity} (2), we know
\begin{eqnarray*}
	\mathcal{L} F \geq 0, \quad \text{ mod} \quad \n F,
\end{eqnarray*}taking into account \eqref{neumann bdry of F}, the assertion follows from the maximum principle.
	\end{proof}
In particular, since $F=\frac{n\s_n}{\s_{n-1}}$, from the uniform lower bound of $F$, we get uniform curvature positive lower bound.
\begin{cor}\label{convexity preserve}
$\Sigma_t, t\in [0,T^*)$ is uniformly convex, that is, there exists $c>0$ depending only on $\S_0$, such that the principal curvatures of $\S_t$, $$\min_i \kappa_i(p, t) \ge c, \quad \forall (p,t)\in \overline{\mathbb{B}}^n\times [0,T^*).$$
\end{cor}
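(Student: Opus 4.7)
The plan is to read off the curvature lower bound directly from the uniform positive lower bound on $F=n\sigma_n/\sigma_{n-1}$ established in the preceding proposition; essentially no further flow analysis is needed. The key algebraic observation is that $F$ is $n$ times the harmonic mean of the principal curvatures: whenever $\kappa=(\kappa_1,\dots,\kappa_n)\in\Gamma_+$,
\begin{equation*}
\frac{\sigma_{n-1}(\kappa)}{\sigma_n(\kappa)}
=\sum_{i=1}^{n}\prod_{j\neq i}\kappa_j\Big/\prod_{j}\kappa_j
=\sum_{i=1}^{n}\frac{1}{\kappa_i},
\qquad\text{so}\qquad
F(\kappa)=\frac{n}{\sum_{i=1}^{n}1/\kappa_i}.
\end{equation*}

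First I would verify that $\kappa(p,t)\in\Gamma_+$ for all $(p,t)\in\overline{\mathbb{B}}^n\times[0,T^\ast)$. At $t=0$ this holds by the strict convexity of $\Sigma_0$, and by continuity of the principal curvatures in $(p,t)$, any first violation would produce a point where some $\kappa_i$ vanishes, forcing $\sigma_n=0$ and hence $F=0$, contradicting the uniform bound $F(p,t)\ge c_0:=\min_{\overline{\mathbb{B}}^n}F(\cdot,0)>0$ from the previous proposition. Thus $\kappa(p,t)\in\Gamma_+$ throughout the flow, and the harmonic-mean identity applies.

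Finally, from $F(p,t)\ge c_0$ and the identity above,
\begin{equation*}
\sum_{i=1}^{n}\frac{1}{\kappa_i(p,t)}\le \frac{n}{c_0},
\end{equation*}
so each individual term is bounded by $n/c_0$, giving $\kappa_i(p,t)\ge c_0/n=:c>0$ for every $i$ and every $(p,t)\in\overline{\mathbb{B}}^n\times[0,T^\ast)$. Since $c$ depends only on $\min_{\overline{\mathbb{B}}^n}F(\cdot,0)$, it depends only on $\Sigma_0$, completing the proof. There is no real obstacle here: all the work has been done in establishing the positive lower bound on $F$ via the evolution equation \eqref{evol-F}, the Neumann boundary identity \eqref{neumann bdry of F}, and the maximum principle.
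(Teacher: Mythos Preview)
Your argument is correct and is exactly the reasoning the paper intends: it states the corollary as an immediate consequence of the lower bound on $F=n\sigma_n/\sigma_{n-1}$ without further detail, and your harmonic-mean identity $F=n/\sum_i\kappa_i^{-1}$ is precisely what makes that implication work. The continuity argument ensuring $\kappa\in\Gamma_+$ throughout is also implicit in the paper (it notes earlier that ``the positivity of $F$ implies that $\Sigma_t$ is strictly convex up to $T^*$''), so you have simply spelled out what the authors left to the reader.
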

Then by Proposition \ref{star-shaped}, we get the following 
\begin{cor}\label{star-shaped preserve}
Along the flow \eqref{flow with capillary}, there exists $c>0$ depending only on $\S_0$, such that
$$\<X_e, \nu\> \ge c, \quad \forall (p,t)\in \overline{\mathbb{B}}^n\times [0,T^*).$$
\end{cor}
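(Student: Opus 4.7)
The plan is to verify that Proposition \ref{star-shaped} applies to each $\S_t$ along the flow with the original vector $e=e_0\in\widehat{\p\S_0}$, and that the constant $\eps$ it produces can be taken uniform in $t$. The starting point is the purely algebraic identity from the proof of Proposition \ref{star-shaped},
$$\<X_e,\nu\>=\<x,e\>\<x-e,\nu\>-\tfrac{1}{2}|x-e|^2\<\nu,e\>,$$
so the task reduces to bounding the four ingredients $\<x,e\>$, $\<x-e,\nu\>$, $|x-e|^2$, and $\<\nu,e\>$ uniformly in $(p,t)$.

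Three of the four bounds follow directly from what has already been proved. Proposition \ref{barrier est} gives $\<x,e\>\ge\cos\theta+\eps_0$ and, because the inner barrier $C_{\theta,R_1}(e)$ encloses a fixed open neighborhood of $e$ in $\overline{\BB}^{n+1}$, also $|x-e|^2\ge c_1>0$. The same sandwich $\S_t\subset\widehat{C_{\theta,R_2}(e)}\setminus\widehat{C_{\theta,R_1}(e)}$ forces $e\in\widehat{\S_t}$ for every $t\in[0,T^*)$; combined with the uniform strict convexity supplied by Corollary \ref{convexity preserve}, this yields $\<x-e,\nu\>\ge 0$ on all of $\S_t$.

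The main obstacle is to keep the sign estimate $\<\nu,e\>\le-\cos^2\theta$ uniform for the fixed $e$: at $t=0$ this is Proposition \ref{ellipticity}(2), and I would redo its three-step argument for $\S_t$. All three hypotheses used there, namely $e\in\mathrm{int}(\widehat{\p\S_t})$, $\S_t\subset H_e$, and $\widehat{\p\S_t}\subset H_e$, persist along the flow thanks to the two-sided barrier. On $\p\S_t$ the identity $\nu=\sin\theta\,\bar\nu-\cos\theta\,\bar N$, together with $\<\bar\nu,e\>\le 0$ (forced by $\widehat{\p\S_t}\subset H_e$) and $\<x,e\>\ge\cos\theta+\eps_0$, already gives $\<\nu,e\>\le-\cos\theta(\cos\theta+\eps_0)<-\cos^2\theta$. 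The topological step identifying $\nu(\BB^n,t)$ as the component of $\SS^n\setminus\nu(\p\BB^n,t)$ not containing $e$ transfers to every time slice by continuity in $t$, since the boundary estimate just established keeps $\nu(\p\BB^n,t)$ strictly separated from $e$. Combining the four bounds gives $\<X_e,\nu\>\ge c_1\cos^2\theta/2$ with a constant depending only on $\S_0$.
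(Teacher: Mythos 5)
Your argument is correct and takes the same route as the paper: Corollary \ref{star-shaped preserve} is Proposition \ref{star-shaped} applied at each time slice, with the constant made uniform in $t$ via the barrier estimate (Proposition \ref{barrier est}) and the uniform convexity (Corollary \ref{convexity preserve}). The paper simply writes ``by Proposition \ref{star-shaped}'' and leaves the $t$-uniformity implicit; your verification --- in particular rerunning the proof of Proposition \ref{ellipticity}(2) along the flow to keep $\<\nu,e\>\le -\cos^2\theta$ for the fixed $e=e_0$ --- is precisely what that implicit step requires, so it is a faithful, slightly more detailed account of the same proof.
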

Next we obtain the uniform upper bound of $F$.
\begin{prop}\label{upper bounds F}Along the flow \eqref{flow with capillary}, there exists $C>0$ depending only on $\S_0$, such that
	\begin{eqnarray*} 
		F(p,t)\leq C,\quad \forall (p,t)\in \overline{\mathbb{B}}^n\times [0,T^*).
	\end{eqnarray*} 
\end{prop}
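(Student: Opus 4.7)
The plan is to apply the parabolic maximum principle to an auxiliary function built from $F$ and the height $\langle x, e\rangle$. Concretely, I would work with
\[
\varphi(p,t) := \log F(p,t) - A\langle x(p,t), e\rangle,
\]
where $A>0$ is a (large) constant to be chosen depending only on $\Sigma_0$; a uniform upper bound for $\varphi$, combined with the barrier/height estimates of Proposition \ref{barrier est}, would yield the desired bound on $F$. The boundary analysis is immediate: by \eqref{neumann bdry of F} we have $\nabla_\mu F=0$ on $\partial\Sigma_t$, while Proposition \ref{ellipticity}(3) gives $\langle \mu, e\rangle >0$ there. Hence
\[
\nabla_\mu \varphi = \frac{\nabla_\mu F}{F} - A\langle \mu, e\rangle = -A\langle \mu, e\rangle < 0,
\]
so by Hopf's boundary point lemma the spatial maximum of $\varphi(\cdot,t)$ can only be attained in the interior of $\overline{\mathbb{B}}^n$.

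Suppose then that the spacetime maximum is attained at an interior point $(p_0, t_0)$ with $t_0>0$. The critical-point condition $\nabla\varphi(p_0,t_0)=0$ gives $F_{;i}=AF(e^T)_i$, while the usual second-order/time test yields $\mathcal L\varphi(p_0,t_0)\ge 0$. Combining Propositions \ref{evol of x-a} and \ref{evol of F} with the chain-rule identity $\mathcal L(\log F)=\mathcal L F/F + (u/F^4)F^{ij}F_{;i}F_{;j}$, I get
\[
\mathcal L(\log F) = \frac{2F^{ij}u_{;i}F_{;j}}{F^3} - \frac{u\,F^{ij}F_{;i}F_{;j}}{F^4} + \frac{(1-\mathcal F)\langle\nu,e\rangle}{F},
\]
into which I substitute $F_{;i}=AF(e^T)_i$.

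Two structural inputs make the argument close. First, since $F=n\sigma_n/\sigma_{n-1}$ is the harmonic mean of the curvatures, in a principal frame $F^{ii}=F^2/(n\kappa_i^2)$, so using the uniform convexity $\kappa_i\ge c$ from Corollary \ref{convexity preserve} I obtain
\[
\mathcal F = \frac{F^2}{n}\sum_i \frac{1}{\kappa_i^2} \le \frac{F}{c},
\]
which converts the a priori ``wrong-signed'' piece into the bounded quantity $(1-\mathcal F)\langle\nu,e\rangle/F \le |\langle\nu,e\rangle|/c \le 1/c$. Second, Cauchy--Schwarz together with $|x|\le 1$ give
\[
\langle X, e\rangle = \langle x, e\rangle^2 - \tfrac{|x|^2+1}{2} \le \tfrac{|x|^2-1}{2}\le 0,
\]
and the barrier estimate \ref{barrier est} confines $\partial\Sigma_t$ to a geodesic annulus on $\mathbb{S}^n$ whose two boundary circles $\partial C_{\theta,R_1}(e)$ and $\partial C_{\theta,R_2}(e)$ are both strictly away from the pole $e$; this forces $\langle X,e\rangle \le -c_0 < 0$ uniformly on $\Sigma_t$.

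Plugging these into the critical-point inequality, the remaining gradient cross-terms $F^{ij}u_{;i}(e^T)_j$ and $F^{ij}(e^T)_i(e^T)_j$ can be bounded pointwise via $F^{ii} = F^2/(n\kappa_i^2)$ and $\kappa_i \ge c$, producing a schematic inequality
\[
0 \le \mathcal L\varphi(p_0,t_0) \le C_1 + C_2 \frac{A}{F(p_0,t_0)} - A c_0,
\]
with $C_1, C_2$ depending only on $\Sigma_0$. Choosing $A$ large enough relative to $C_1/c_0$ forces $F(p_0,t_0)$ to be bounded, hence $\varphi$ and therefore $F$ are uniformly bounded above. The principal obstacle is the handling of the two ``bad'' terms $(1-\mathcal F)\langle\nu,e\rangle/F$ and $-A\mathcal L\langle x,e\rangle$ at the critical point: neither the explicit formula for $F^{ii}$ nor the uniform lower bound on the $\kappa_i$ can be dropped, since without them $\mathcal F$ is uncontrolled and the auxiliary cannot be balanced against the single genuinely negative contribution $A\langle X, e\rangle \le -Ac_0$.
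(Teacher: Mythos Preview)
Your proof is correct and follows essentially the same approach as the paper: the same test function $\log F - A\langle x,e\rangle$, the same boundary analysis via $\nabla_\mu F=0$ and $\langle\mu,e\rangle>0$, and the same interior maximum-principle argument driven by the strictly negative term $A\langle X,e\rangle\le -Ac_0$. The only noteworthy differences are cosmetic: you bound $\mathcal F\le F/c$ via the uniform lower curvature bound of Corollary~\ref{convexity preserve}, whereas the paper bounds $\mathcal F$ by a dimensional constant directly from $F^{ii}=F^2/(n\kappa_i^2)\le n$; and the paper disposes of the quadratic gradient term by choosing $A$ large enough that $2A-A^2\langle x+\cos\theta\nu,e\rangle<0$, rather than estimating it. One small imprecision: your sentence about the barrier estimate ``confining $\partial\Sigma_t$ to a geodesic annulus'' is not what yields $\langle X,e\rangle\le -c_0$ at \emph{interior} points of $\Sigma_t$; what you actually need (and what Proposition~\ref{barrier est} gives) is the uniform height bound $\langle x,e\rangle\le 1-\varepsilon_0$ on all of $\Sigma_t$, from which
\[
\langle X,e\rangle=\tfrac12(\langle x,e\rangle^2-1)-\tfrac12(|x|^2-\langle x,e\rangle^2)\le \tfrac12\big((1-\varepsilon_0)^2-1\big)<0.
\]
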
	
\begin{proof}
Consider the function
	\begin{eqnarray*}
		\Phi:=\log F-\alpha   \langle x,e\rangle  
	\end{eqnarray*}where $\alpha>0$ will be determined later. Using \eqref{neumann bdry of F} and Proposition \ref{ellipticity} (3), we have on $\p\S_t$, 
\begin{eqnarray*}
	&&\n_\mu \Phi =-\alpha \n_\mu \langle x,e\rangle =-\alpha  \langle \mu ,e\rangle <0.
\end{eqnarray*} Thus $\Phi$ attains its maximum value at an interior point, say $p_0\in \BB^n$, then we have
\begin{eqnarray*}
	\n 		\Phi(p_0)=0,\quad \n^2\Phi(p_0) \leq 0, \quad \p_t \Phi(p_0)\ge 0.
\end{eqnarray*}
 Now all the computation below are conducted at the point $p_0$. We have 
\begin{eqnarray}\label{critical pt}
(	\log F)_{;i}=\alpha \langle x,e\rangle_{;i}, \quad\text{ and }\quad  \mathcal{L}		\Phi \geq 0.
\end{eqnarray}
From \eqref{evol-F} and \eqref{critical pt},
\begin{eqnarray*}
	\mathcal{L} \log F%&=&\frac{\mathcal{L}F}{F}+\frac{\langle x+\cos\theta \nu,a\rangle }{F^2}F^{ij} (\log F)_{;i}(\log F)_{;j}
	&=&F^{-1}\Big[\frac{2F^{ij}}{F^2} \langle x +\cos\theta \nu,e\rangle_{;i} F_{;j}
	- \frac{2 \langle x +\cos\theta \nu,e\rangle}{F^3} F^{ij} F_{;i}F_{;j}	+(1-\mathcal{F})\langle \nu,e\rangle  \Big]\\&&+\frac{\langle x+\cos\theta \nu,e\rangle }{F^2}F^{ij} (\log F)_{;i}(\log F)_{;j}
	\\&=&F^{-2}\left( {-\alpha^2 \langle x+\cos\theta \nu,e\rangle +2\alpha} \right)\cdot F^{ij} \langle x ,e\rangle_{;i}  \langle x ,e\rangle_{;j} \\&& + F^{-1}(1-\mathcal{F})\langle \nu,e\rangle  +2\cos\theta \alpha F^{-2}F^{ij}\langle x,e\rangle_{;j}\langle \nu,e\rangle_{;i}.
	%\\&\leq& F^{-1}(1-\mathcal{F})\langle \nu,a\rangle  +2\alpha \cos\theta F^{-1}.
\end{eqnarray*}
By choosing orthonormal frame with principal directions, we may assume that  $h_{ij}$ is diagonal at $p_0\in \BB^n$. By $(h_{ij})>0$,we have \begin{eqnarray*}
2\cos\theta \alpha F^{-2} F^{ij}\langle x,e\rangle_{;j}\langle \nu,e\rangle_{;i}&=&2\alpha\cos\theta  F^{-2} F^{ij} \langle e_j,e\rangle h_{ik}\langle e_k,e\rangle \\&\leq & 2\alpha \cos\theta  F^{-2}\left|F^{ii}h_{ii}\right|=2\alpha \cos\theta F^{-1}.
\end{eqnarray*} 
Also by $(h_{ij})>0$ we have $F^{ii}\le 1$ for each $i$. Thus
$$F^{-1}(1-\mathcal{F})\langle \nu,e\rangle \le CF^{-1}.$$
By choosing $\alpha$ large, we have
\begin{eqnarray*}
	\mathcal{L} \log F\le CF^{-1}.
	\end{eqnarray*}
Combining with Proposition \ref{evol of x-a}, we get
\begin{eqnarray}\label{xeq33}
0&\leq& 	\mathcal{L}\Phi(x_0)  =\mathcal{L}\left( \log F-\alpha   \langle x,e\rangle  \right)\nonumber
	\\&\leq&CF^{-1}-\alpha\Big[2F^{-1}\langle x+\cos\theta \nu,e\rangle \langle \nu,e\rangle +\cos\theta F^{-1}|e^T|^2-\langle  X,e\rangle\Big] \nonumber
	\\&\le & CF^{-1}+ C\alpha F^{-1}+ \alpha \langle X,e\rangle.%\text{ lower order term?}
\end{eqnarray}
By using Proposition \ref{barrier est}, we have  \begin{eqnarray*}
	&&\langle X,e\rangle =\langle x,e\rangle^2-\frac{1}{2}(|x|^2+1)
	=-\frac{1}{2}(1-\langle x,e\rangle^2)-\frac{1}{2}(1-\langle x,e\rangle^2) \leq -c_0,
\end{eqnarray*}  for some $c_0>0$.
The upper bound for $F$ follows from \eqref{xeq33}.
\end{proof}
Next we obtain the uniform bound of the mean curvature.
\begin{prop}\label{mean-curv-bound}
	Along the flow \eqref{flow with capillary}, there exists $C>0$ depending only on $\S_0$, such that
	\begin{eqnarray*} 
		H(p,t)\leq C,\quad \forall (p,t)\in \overline{\mathbb{B}}^n\times [0,T^*).
	\end{eqnarray*} 
\end{prop}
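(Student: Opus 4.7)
The plan is to apply the auxiliary function method. Consider
$$\Phi(p,t) := \log H(p,t) - \alpha \langle x(p,t), e\rangle,$$
where $\alpha > 0$ will be chosen sufficiently large depending only on $\S_0$. The aim is to show that $\sup \Phi < \infty$ on $\overline{\BB}^n \times [0, T^*)$; combined with the bound $\langle x, e\rangle \leq 1-\epsilon_0$ from Proposition \ref{barrier est} this yields $H \leq C$. Observe that $H > 0$ everywhere by Corollary \ref{convexity preserve}, so $\log H$ is well-defined.

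For the boundary analysis along $\partial \BB^n \times [0, T^*)$, the estimate \eqref{boundaryH} in Proposition \ref{evol of H} gives $\nabla_\mu H \leq 0$, hence $\nabla_\mu \log H \leq 0$, while Proposition \ref{ellipticity}(3) provides $\langle \mu, e\rangle > 0$. Therefore
$$\nabla_\mu \Phi = \frac{\nabla_\mu H}{H} - \alpha \langle \mu, e\rangle < 0 \quad \hbox{ on } \partial \BB^n \times [0, T^*),$$
so $\Phi$ cannot attain its supremum on the lateral boundary. Either the supremum is controlled by the initial data, or it is attained at an interior spatial point $(p_0, t_0)$ with $t_0 > 0$, at which $\nabla \Phi(p_0, t_0) = 0$ and $\mathcal{L}\Phi(p_0, t_0) \geq 0$.

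At the interior maximum I would use the identity
$$\mathcal{L}(\log H) = \frac{\mathcal{L} H}{H} + \frac{\langle x + \cos\theta\nu, e\rangle}{F^2 H^2}\,F^{ij} H_{;i} H_{;j},$$
together with Propositions \ref{evol of x-a} and \ref{evol of H}. The critical identity $\nabla \log H = \alpha \nabla \langle x, e\rangle$ controls the gradient term by $C\alpha^2$, since $F^{ij}$ is uniformly bounded (consequence of the lower bound $\kappa_i \geq c$ from Corollary \ref{convexity preserve} and the two-sided bound on $F$). Proposition \ref{evol of x-a} gives $|\alpha \mathcal{L}\langle x, e\rangle| \leq C\alpha$. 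The concavity of $F$ (Proposition \ref{prop2.4}) renders the third-order term $F^{-2}\langle x+\cos\theta\nu, e\rangle\, g^{ij} h_{kl;i} h_{st;j} \frac{\partial^2 F}{\partial h_{kl} \partial h_{st}}$ non-positive and hence discardable. The dominant negative contribution comes from two sources in $\mathcal{L}H/H$: by Cauchy--Schwarz $|h|^2 \geq H^2/n$, together with $\langle x+\cos\theta\nu, e\rangle \geq \epsilon_0$ (Corollary \ref{ellipticity of flow}) and the upper bound on $F$ (Proposition \ref{upper bounds F}), the term $-2\langle x+\cos\theta\nu, e\rangle |h|^2/(HF)$ is bounded above by $-c_0 H$; the term $\cos\theta |h|^2 \langle \nu, e\rangle/(HF)$ contributes a further $-c_1 H$ because $\cos\theta\langle \nu, e\rangle < -\cos^3\theta$ by Proposition \ref{ellipticity}(2). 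The cross terms $2F^{-2}\langle \nabla F, e\rangle/H$ and $2\cos\theta F^{-2} g^{ij} F_{;j} h_i^k \langle e_k, e\rangle/H$ are absorbed via Cauchy--Schwarz with small weights into the favourable term $-2\langle x+\cos\theta\nu, e\rangle F^{-3}|\nabla F|^2/H$, leaving residues bounded by $C$. Assembling,
$$0 \leq \mathcal{L}\Phi(p_0, t_0) \leq -c\,H(p_0, t_0) + C\alpha^2 + C\alpha + C,$$
which forces $H(p_0, t_0) \leq C'$ for a constant depending only on $\S_0$, and hence $\Phi \leq C''$.

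The main obstacle is the careful absorption of the $\nabla F$-terms: unlike $\nabla H$, the quantity $\nabla F$ is not directly controlled by the critical equation, so one must soak the cross terms into $-\langle x+\cos\theta\nu, e\rangle F^{-3}|\nabla F|^2/H$ using weighted Cauchy--Schwarz, and then verify that the resulting $|h|^2$-type residues carry small enough constants that the dominant $-cH$ term still prevails. Provided $\alpha$ is chosen so that $C\alpha^2$ remains negligible against $cH$ for $H$ large, the argument closes.
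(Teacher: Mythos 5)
Your overall structure is sound: use the boundary condition $\nabla_\mu H\le 0$ to push the spatial maximum into the interior, apply the maximum principle to the evolution equation of $H$, discard the concavity term, exploit the favourable $|h|^2$-terms coming from $\cos\theta\langle\nu,e\rangle<0$ and $\langle x+\cos\theta\nu,e\rangle>0$, and absorb the $\nabla F$-cross terms into $-2\langle x+\cos\theta\nu,e\rangle F^{-3}|\nabla F|^2$. (The auxiliary function $\log H-\alpha\langle x,e\rangle$ is harmless but superfluous: since $\nabla_\mu H\le 0$ already, the paper works with $H$ directly and so avoids the extra $C\alpha^2$ gradient term entirely.)

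However, there is a concrete gap, and one statement in the body of your proof is wrong. You assert that absorbing the two cross terms via Cauchy--Schwarz leaves ``residues bounded by $C$.'' That is true for the first cross term $2F^{-2}\langle\nabla F,e\rangle$, but not for the second, $2\cos\theta F^{-2}g^{ij}F_{;j}h_i^k\langle e_k,e\rangle$: a weighted Cauchy--Schwarz against $\delta\langle x+\cos\theta\nu,e\rangle F^{-3}|\nabla F|^2$ produces a residue of the form $\tfrac{C}{\delta}F^{-1}\tfrac{\cos^2\theta|e^T|^2}{\langle x+\cos\theta\nu,e\rangle}|h|^2$, which is of the same order $|h|^2$ as the favourable terms, not a constant. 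You do walk this back in your closing paragraph, acknowledging that one must ``verify that the resulting $|h|^2$-type residues carry small enough constants,'' but you neither perform this verification nor identify why it should succeed — and in fact this is precisely the non-trivial point. After collecting all the $|h|^2$-coefficients, one is left with the requirement
\begin{eqnarray*}
\cos^2\theta\,|e^T|^2 \;<\; 2(1-\varepsilon)\,\langle x+\cos\theta\nu,e\rangle\bigl(\langle x,e\rangle+\langle x+\cos\theta\nu,e\rangle\bigr),
\end{eqnarray*}
and this is not automatic. The paper establishes it by completing the square in $(F_{;i},h_{ii})$ (the discriminant condition $V_i^2-4UW<0$) and then performing a delicate algebraic manipulation which rewrites
\begin{eqnarray*}
2(1-\varepsilon)\langle x+\cos\theta\nu,e\rangle\bigl(\langle x,e\rangle+\langle x+\cos\theta\nu,e\rangle\bigr)+\cos^2\theta\langle\nu,e\rangle^2
\end{eqnarray*}
as a manifest sum of squares
\begin{eqnarray*}
3(1-\varepsilon)\langle x+\cos\theta\nu,e\rangle^2+(1-\varepsilon)\langle x,e\rangle^2+\varepsilon\cos^2\theta\langle\nu,e\rangle^2,
\end{eqnarray*}
after which the quantitative barrier estimate $\langle x,e\rangle\ge\cos\theta+\epsilon_0$ from Proposition \ref{barrier est} (the strict, uniform gap $\epsilon_0$ is essential; $\langle x,e\rangle>\cos\theta$ alone would not do) yields the desired strict inequality with $\varepsilon=\epsilon_1$. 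Your final sentence misdirects the reader: the choice of $\alpha$ to dominate $C\alpha^2$ by $cH$ is elementary, whereas the actual obstruction — showing that the favourable $|h|^2$-coefficient survives the absorption — is left entirely unaddressed, and this is exactly where the $\theta$-capillary case is genuinely harder than the free boundary case $\theta=\tfrac{\pi}{2}$, where $\cos\theta=0$ kills the troublesome cross term outright.
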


\begin{proof}
Firstly, from \eqref{boundaryH}, we know that $\n_\mu H\leq 0$ on $\p\S_t$. Thus $H$ attains its  maximum value at some interior point, say $p_0\in \BB^n$. We conduct all the computation below at $p_0$. 

Recall the evolution equation \eqref{evol-H} of $H$. From the concavity of $F=\frac{n\s_n}{\s_{n-1}}$ and Corollary \ref{ellipticity of flow}, we see
$$F^{-2}   \langle x+\cos\theta \nu ,e\rangle g^{ij}h_{kl;i}h_{st;j}\frac{\p^2 F}{\p h_{kl}\p h_{st}}\le 0. $$
Thus 
\begin{eqnarray*}
&&	0\leq	\mathcal{L}H(p_0) \le \text{J}_1+\text{J}_2+ \text{J}_3,
\end{eqnarray*}
where \begin{eqnarray*}
 \text{J}_1&:=&H\left(\langle x,e\rangle +\langle x+\cos\theta \nu,e\rangle \right)+(HF^{-1}-n)\langle \nu,e\rangle,\\
 \text{J}_2&:=&2F^{-2}\langle \n F,e\rangle -2\varepsilon F^{-3}\langle x+\cos\theta\nu,e\rangle |\n F|^2,\\
 \text{J}_3&:=&\cos\theta |h|^2 F^{-1}\langle \nu,e\rangle -2|h|^2 F^{-1}\langle x+\cos\theta\nu,e\rangle\\&&-2(1-\varepsilon)F^{-3}\langle x+\cos\theta\nu,e\rangle |\n F|^2 +   {2\cos\theta F^{-2}g^{ij} F_{;i}h_{jk}\langle e_k,e\rangle}.
\end{eqnarray*}
Here $\varepsilon>0$ will be chosen later.

By Cauchy-Schwarz inequality and the bound for $F$, we see easily that \begin{eqnarray*}
 \text{J}_1\le CH, \quad  \text{J}_2\le C_{\varepsilon}.
\end{eqnarray*}

Next we tackle $\text{J}_3$. By choosing orthonormal frame with principal directions, we may assume that  $g_{ij}=\delta_{ij}$ and $h_{ij}$ is diagonal at $p_0\in \BB^n$. Then
\begin{eqnarray*}
	F\cdot  \text{J}_3&=&
	-\Big[\langle x,e\rangle +\langle x+\cos\theta \nu,e\rangle\Big]|h|^2\\&&-2(1-\varepsilon)F^{-2}\langle x+\cos\theta \nu,e\rangle |\n F|^2+2\cos\theta F^{-1}  \sum_{i=1}^n h_{ii}F_{;i} \langle e_i,e\rangle.
\\&=&-W\sum_{i=1}^n \left(F_{;i}-\frac{V_i}{2W}h_{ii}\right)^2+\frac{1}{W}\sum_{i=1}^n\left(\frac{V^2_i}{4}-UW\right)h_{ii}^2,
\end{eqnarray*}
where for notation simplicity we used
\begin{eqnarray*}
	U:=\langle x,e\rangle +\langle x+\cos\theta \nu,e\rangle,\quad W:=2(1-\varepsilon)F^{-2}\langle x+\cos\theta \nu,e\rangle , 
\end{eqnarray*}and \begin{eqnarray*}
V_i:=2F^{-1}\cos\theta \langle e_i,e\rangle. 
\end{eqnarray*}

Next, we analyze the term $V^2_i-4UW$. For each $i$,
\begin{eqnarray*}
	V^2_i-4UW&=&4F^{-2}\Big[\cos^2\theta \langle e_i,e\rangle^2-2(1-\varepsilon)\langle x+\cos\theta \nu,e\rangle\left(\langle x,e\rangle +\langle x+\cos\theta \nu,e\rangle\right)   \Big]
	\\&\le &4F^{-2}\Big[\cos^2\theta |e^T|^2-2(1-\varepsilon)\langle x+\cos\theta \nu,e\rangle\left(\langle x,e\rangle +\langle x+\cos\theta \nu,e\rangle\right)   \Big].
\end{eqnarray*}
Recall that we have $$\langle x,e\rangle \geq \cos\theta+\ep_0= \frac{1}{\sqrt{1-\ep_1}} \cos\theta$$ for some $\ep_1>0$, and $$\langle x+\cos\theta\nu,e\rangle \geq \ep_0>0,$$ due to Proposition \ref{barrier est},  and Corollary \ref{ellipticity of flow}.  Then  
\begin{eqnarray*}%\label{angle constraint}
	&&2(1-\varepsilon)\langle x+\cos\theta \nu,e\rangle\left(\langle x,e\rangle +\langle x+\cos\theta \nu,e\rangle\right)  + \cos^2\theta \langle \nu,e\rangle^2
	\\&=&3(1-\varepsilon)\langle x+\cos\theta\nu,e\rangle^2+(1-\varepsilon)\langle x,e\rangle^2 +\varepsilon\cos^2\theta \langle \nu,e\rangle^2
	\\&\geq& 3(1-\varepsilon)\ep_0^2+\frac{1-\varepsilon}{1-\ep_1}\cos^2\theta.
\end{eqnarray*}
By choosing $\varepsilon=\ep_1$ in the  above inequality, we obtain
\begin{eqnarray*}
&&\cos^2\theta |e^T|^2-2(1-\varepsilon)\langle x+\cos\theta \nu,e\rangle\left(\langle x,e\rangle +\langle x+\cos\theta \nu,e\rangle\right)\le -3(1-\ep_1)\ep_0^2.
\end{eqnarray*}
It follows that 
	\begin{eqnarray*}
	V^2_i-4UW\leq -12(1-\ep_1)\ep^2_0F^{-2},\end{eqnarray*}
and hence
\begin{eqnarray*}
	\text{J}_3\le -C|h|^2.\end{eqnarray*}
Therefore,
\begin{eqnarray*}
	&&0 \leq \mathcal{L}H \leq \text{J}_1+\text{J}_2+ 	\text{J}_3\le CH+C-C|h|^2.\end{eqnarray*}
We conclude that $H$ is bounded above.
\end{proof}
It follows from Proposition \ref{mean-curv-bound} and Corollary \ref{convexity preserve} that all the principal curvatures are bounded.
\begin{cor}\label{curvature bound}
$\Sigma_t, t\in [0,T^*)$, has uniform curvature bound, that is, there exists $C>0$ depending only on $\S_0$, such that the principal curvatures of $\S_t$, $$\max_i \kappa_i(p, t) \le C, \quad \forall (p,t)\in \overline{\mathbb{B}}^n\times [0,T^*).$$
\end{cor}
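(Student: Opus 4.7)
The plan is to observe that Corollary \ref{curvature bound} follows immediately from combining the two preceding results, so essentially no new work is required; the whole content is bookkeeping. Specifically, Corollary \ref{convexity preserve} gives a uniform positive lower bound $\kappa_i(p,t) \ge c > 0$ for every principal curvature along the flow, and Proposition \ref{mean-curv-bound} gives a uniform upper bound $H(p,t) = \sum_i \kappa_i(p,t) \le C$.

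First I would fix $(p,t) \in \overline{\mathbb{B}}^n \times [0, T^*)$ and an index $i$. By Corollary \ref{convexity preserve}, $\kappa_j(p,t) \ge c$ for all $j$, in particular for every $j \neq i$. Writing
\begin{eqnarray*}
\kappa_i(p,t) = H(p,t) - \sum_{j \neq i} \kappa_j(p,t) \le C - (n-1)c,
\end{eqnarray*}
one gets a uniform upper bound on each $\kappa_i$. Even more crudely, since all $\kappa_j$ are nonnegative, $\kappa_i \le H \le C$ directly.

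The main obstacle is not in this corollary but in the ingredients that have already been established: the concavity of $F = \tfrac{n\sigma_n}{\sigma_{n-1}}$ together with the Neumann boundary condition $\nabla_\mu F = 0$ from Proposition \ref{evol of F} was needed for the lower bound on $F$ (hence on $\kappa_i$), and the boundary inequality $\nabla_\mu H \le 0$ from Proposition \ref{evol of H} together with the delicate quadratic-form analysis in the proof of Proposition \ref{mean-curv-bound} was needed for the upper bound on $H$. Once these are in hand, Corollary \ref{curvature bound} is a one-line consequence of positivity plus a bounded trace.
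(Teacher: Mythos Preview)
Your proposal is correct and matches the paper's own argument exactly: the paper also obtains Corollary~\ref{curvature bound} as an immediate consequence of Proposition~\ref{mean-curv-bound} and Corollary~\ref{convexity preserve}, using that each $\kappa_i$ is positive and their sum $H$ is uniformly bounded. No further work is needed.
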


\subsection{Convergence of the flow}\

\begin{prop}\label{global existence}
The flow  \eqref{flow with capillary} exists for all time with uniform $C^{\infty}$-estimates.
\end{prop}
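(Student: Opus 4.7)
The plan is to combine the a priori estimates already established with standard parabolic regularity for fully nonlinear oblique-boundary problems, and then to close the argument with a continuation procedure. First, I would rewrite the geometric flow \eqref{flow with capillary} as a scalar parabolic equation. Since $\langle X_e,\nu\rangle\ge c>0$ uniformly along the flow (Corollary \ref{star-shaped preserve}), a Möbius transformation sending $e$ to the origin of $\mathbb{B}^{n+1}$ renders each $\Sigma_t$ a radial graph over a fixed spherical cap $\Omega\subset \mathbb{S}^n_+$, exactly as carried out in \cite{LS, WW, WX2, SWX}. In those coordinates \eqref{flow with capillary} becomes a scalar PDE
\begin{equation*}
\partial_t u \;=\; G(z,u,Du,D^2u) \quad \text{in }\Omega\times[0,T^*),
\end{equation*}
with an oblique (capillary) boundary condition on $\partial\Omega$ coming from $\langle\nu,\bar N\circ x\rangle=-\cos\theta$, linear in $Du$.

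Second, I would collect the existing estimates and feed them into regularity theory. The barrier estimate (Proposition \ref{barrier est}) supplies a two-sided $C^0$ bound for $u$; the uniform lower bound $\langle X_e,\nu\rangle\ge c$ yields the $C^1$ bound (i.e.\ gradient bound for $u$); and the two-sided pinching on principal curvatures (Corollary \ref{convexity preserve} and Corollary \ref{curvature bound}) provides the $C^2$ bound. Combined with the uniform upper and lower bounds for $F=n\sigma_n/\sigma_{n-1}$ and with Corollary \ref{ellipticity of flow}, these estimates ensure that the equation is uniformly parabolic with coefficients uniformly bounded in $C^0$. Since $F$ is concave on $\Gamma_+$ (Proposition \ref{prop2.4}) and the curvatures remain in a fixed compact subset of $\Gamma_+$, the Evans--Krylov theorem in the interior and its boundary counterpart for oblique boundary problems (see Lieberman's theory, as applied in the analogous free-boundary setting of \cite{LS, SWX, WX2}) yield a uniform $C^{2,\alpha}$-estimate on $\overline\Omega\times[0,T^*)$.

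Standard Schauder bootstrapping then promotes the $C^{2,\alpha}$-estimate to uniform $C^{k,\alpha}$-estimates for every $k\ge 2$, i.e.\ a uniform $C^\infty$-bound independent of $T^*$. A continuation argument completes the proof: if $T^*<\infty$, the uniform $C^\infty$-bounds together with short-time existence (available since at $t=T^*$ the limiting hypersurface is still smooth, strictly convex, and $\theta$-capillary, so all the conditions for the short-time theory are preserved) allow us to restart the flow past $T^*$, contradicting maximality. The only real obstacle is the boundary $C^{2,\alpha}$-estimate for a concave fully nonlinear parabolic operator under an oblique (capillary) condition; this is essentially the same technical input that was verified for the free-boundary case in \cite{SWX, WX2}, and carries over to $\theta\in(0,\tfrac{\pi}{2})$ because the capillary angle only alters the obliqueness constant in the boundary operator, not its structural form.
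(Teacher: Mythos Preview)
Your proposal is correct and follows essentially the same approach as the paper: reduce the flow to a scalar parabolic equation with oblique boundary condition via a M\"obius transformation (the paper sends $\overline{\BB}^{n+1}$ to $\overline{\RR}^{n+1}_+$ and writes $\widetilde{\Sigma}_t$ as a radial graph over $\overline{\SS}^n_+$, rather than mapping $e$ to the origin, but the role of $\langle X_e,\nu\rangle>0$ is identical), then feed in the $C^0$, $C^1$, and $C^2$ a~priori estimates and invoke standard oblique-boundary parabolic regularity (Dong, Ural'tseva, Lieberman) to obtain uniform $C^\infty$ bounds and long-time existence. The paper is terser about the continuation step and the specific references cited, but the logical skeleton is the same.
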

 
 \begin{proof} Because of Corollary \ref{star-shaped preserve},  we can reduce the flow \eqref{flow with capillary} to a scalar parabolic equation with oblique boundary condition, by 
 introducing a conformal transformation map as in \cite{SWX, WW, WX2}. 
 
 Without loss of generality, we may assume $e=E_{n+1}=(0,\cdots, 0, 1)$. Define the transformation map% $\varphi$ from the unit ball into the half space as
 	\begin{equation*}
 		\begin{split}
 			\varphi: \quad & \overline{\mathbb{B}}^{n+1}\longrightarrow \overline{\mathbb{R}}^{n+1}_+:=\{(y',y_{n+1})\in\mathbb{R}^{n+1}:y_{n+1}>0\}
 			\\& (x',x_{n+1}) \longmapsto \frac{2x'+(1-|x'|^2-x_{n+1}^2)E_{n+1}}{|x'|^2+(x_{n+1}-1)^2}:=(y',y_{n+1})=y,
 		\end{split}
 	\end{equation*}
 	where $ {x}=(x',x_{n+1})$ with $x'=(x_1,\ldots,x_n)\in\mathbb{R}^n$. Note that $\varphi$ maps $\mathbb{S}^n\setminus \{e\}$ to $\partial \mathbb{R}^{n+1}_+,$ and 
 	\begin{equation*}
 		\varphi^*(\delta_{\mathbb{R}^{n+1}_+})=\frac{4}{\left(|x'|^2+(x_{n+1}-1)^2\right)^2} \delta_{\mathbb{B}^{n+1}}:=e^{-2w}\delta_{\mathbb{B}^{n+1}},
 	\end{equation*}
 	which means that $\varphi$ is a conformal transformation from $\left(\overline{\mathbb{B}}^{n+1},\delta_{\mathbb{B}^{n+1}}\right)$ to $\left(\overline{\mathbb{R}}^{n+1}_+,\delta_{\mathbb{R}^{n+1}_+}\right)$. 
Since $\left(\overline{\mathbb{B}}^{n+1},\delta_{\mathbb{{B}}^{n+1}}\right)$ and $\left(\overline{\mathbb{R}}^{n+1}_+,(\varphi^{-1})^*(\delta_{\mathbb{R}^{n+1}_+})\right)$ are isometric, a proper embedded hypersurface $\Sigma_t= {x}(\overline{\mathbb{B}}^n,t)$ in $\left(\overline{\mathbb{B}}^{n+1},\delta_{\mathbb{{B}}^{n+1}}\right)$ can be identified as $\widetilde{\Sigma}_t:= {y}(\overline{\mathbb{B}}^n,t)$ in $\left(\overline{\mathbb{R}}^{n+1}_+,(\varphi^{-1})^*(\delta_{\mathbb{R}^{n+1}_+})\right)$, 
 	where $ {y}:=\varphi\circ {x}$.
	
	As shown in \cite{WX2},  $\varphi_*(X_e)=y$. Therefore, since $\<X_e, \nu\> >0$ on $\S_t$, we see
	$\widetilde{\Sigma}_t\subset \left(\overline{\mathbb{R}}^{n+1}_+,(\varphi^{-1})^*(\delta_{\mathbb{R}^{n+1}_+})\right)$  can be written  as a radial graph over $\overline{\mathbb{S}}^{n}_+$, that is, there exists some positive function $\rho(z',t)$ defined on  $\overline{\mathbb{S}}^{n}_+\times [0,T^*)$ such that 
 	\begin{eqnarray*}
 	y=\rho (z'(p,t),t)z'(p,t),
 	\end{eqnarray*}where $(p,t)\in \overline{\mathbb{B}}^n\times[0,T^*)$. Denote $z':=(\beta,\xi)\in [0,\frac{\pi}{2}]\times \mathbb{S}^{n-1}$ and $u:=\log \rho$, then the	flow  \eqref{flow with capillary} is equivalent to   the following scalar parabolic equation  on $\overline{\mathbb{S}}^n_+$, see \cite{WX2} for a detailed computation.
 	 \begin{eqnarray}\label{scalar heat eq}
\begin{cases}
	 \partial_t u =\frac{v}{\rho e^w}\tilde{f} &\quad \text{ in }\mathbb{S}^n_+\times [0,T^*),\\
	\nabla_{\p_\beta} u =\cos\theta v &\quad \text{ in } \p\mathbb{S}^n_+\times [0,T^*),\\
	u(\cdot,0) =u_0(\cdot)&\quad \text{ on }\mathbb{S}^n_+.
\end{cases}
 	 \end{eqnarray}
Here $v:=\sqrt{1+|\n u|^2}$, $u_0=\log \rho_0$,  $\rho_0$ is the corresponding quantity for $\S_0$ under the transformation $\varphi$, $\tilde{f}$ is the corresponding quantity of $f$ under the transformation $\varphi$, and $\p_\beta$ is the unit outward normal of $\p\mathbb{S}^n_+$ on $\overline{\mathbb{S}}^n_+$.
 	 
From the a priori estimates  in Corollaries \ref{convexity preserve}, \ref{star-shaped preserve} and \ref{curvature bound}, we see that $u$ is uniformly bounded in $C^2(\mathbb{S}^n_+\times [0,T^*))$ and the scalar equation in \eqref{scalar heat eq} is uniformly parabolic.
Note that $|\cos\theta|<1$, the boundary value condition in \eqref{scalar heat eq} satisfies	the uniformly oblique property,  from the standard parabolic theory  (see e.g. \cite[Theorem 6.1, Theorem 6.4 and Theorem 6.5]{Dong}, also  \cite[Theorem 5]{Ura}  and \cite[Theorem 14.23]{Lie}), we conclude the uniform $C^\infty$-estimates and the long-time existence of solution to \eqref{scalar heat eq}.
 \end{proof}

\begin{prop}\label{converge limit}
$x(\cdot, t)$ smoothly converges to a uniquely determined spherical cap around $a$ with $\theta$-capillary boundary, as $t\to\infty$.		
\end{prop}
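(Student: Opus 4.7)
The plan is to combine the uniform $C^\infty$-estimates from Proposition \ref{global existence} with the monotonicity from Proposition \ref{monotone along flow} to identify the limit. Each $W_{k,\theta}(\widehat{\Sigma_t})$, $1\le k\le n-1$, is monotone non-decreasing and uniformly bounded, so it converges as $t\to\infty$, and $\tfrac{d}{dt}W_{k,\theta}\in L^1(0,\infty)$. Inserting the Minkowski identity \eqref{minkowski sigma_k} into Theorem \ref{fvf} gives
\begin{equation*}
\tfrac{d}{dt}W_{k,\theta}(\widehat{\Sigma_t}) = c_{n,k}\int_{\Sigma_t}\left[\tfrac{H_k H_{n-1}}{H_n}-H_{k-1}\right]\bigl(\langle x,e\rangle+\cos\theta\langle\nu,e\rangle\bigr)\,dA_t,
\end{equation*}
whose integrand is pointwise non-negative by Newton--MacLaurin \eqref{NM-ineq} and Corollary \ref{ellipticity of flow}. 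Using the $C^\infty$ bounds and Arzel\`a--Ascoli, any sequence $t_j\to\infty$ admits a subsequence along which $x(\cdot,t_j)$ converges smoothly to an embedding $x_\infty$; integrability of $\tfrac{d}{dt}W_{k,\theta}$ then forces $\tfrac{H_k H_{n-1}}{H_n}\equiv H_{k-1}$ pointwise on $\Sigma_\infty$, and the equality case of Newton--MacLaurin together with the uniform strict convexity of Corollary \ref{convexity preserve} and the $\theta$-capillary boundary condition makes $\Sigma_\infty$ a spherical cap $C_{\theta,r_\infty}(e_\infty')$, with $r_\infty$ uniquely pinned down by the preservation of $W_{n,\theta}$.

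The main obstacle is to identify $e_\infty'=e$. For this, I continue the flow from $\Sigma_\infty$: by smooth stability in the initial data, $\Sigma_{t_j+s}$ converges smoothly on any compact $s$-interval to the solution $\Sigma_\infty(s)$ starting from $\Sigma_\infty$, and since the $W_{k,\theta}$ are already saturated, $W_{k,\theta}(\Sigma_\infty(s))$ is constant in $s$; rerunning the monotonicity argument forces $\Sigma_\infty(s)=C_{\theta,r_\infty}(e'(s))$ for a smooth curve $e'(s)\in\mathbb{S}^n$. Substituting this family into $\partial_t x=f\nu+T$ with $f=F^{-1}\langle x+\cos\theta\nu,e\rangle-\langle X_e,\nu\rangle$, subtracting the static identity \eqref{static-eq} applied at $e'(s)$, and using $|x|^2=2\lambda\langle e'(s),x\rangle+r_\infty^2-\lambda^2$ on the cap (where $\lambda=\sqrt{r_\infty^2+2r_\infty\cos\theta+1}$), the resulting constraint $\langle x,\lambda v_\perp-\dot{e'}\rangle\equiv0$ with $v_\perp=(e-e')-\langle e-e',e'\rangle e'$, together with the fact that $x$ spans $\mathbb{R}^{n+1}$ on the cap, yields the autonomous ODE
\begin{equation*}
\dot{e'}=\lambda\bigl(e-\langle e,e'\rangle e'\bigr),\qquad \tfrac{d}{ds}\langle e,e'\rangle=\lambda\bigl(1-\langle e,e'\rangle^2\bigr)\ge 0,
\end{equation*}
whose only equilibria are $\pm e$, with $\langle e,e'\rangle$ strictly increasing off these equilibria.

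The $\omega$-limit set of the flow is then a non-empty, closed, connected, forward-invariant subset of the compact family of spherical caps of radius $r_\infty$ parametrized by $\mathbb{S}^n$. The barrier estimates of Proposition \ref{barrier est} exclude $C_{\theta,r_\infty}(-e)$, and the strict monotonicity of $\langle e,e'\rangle$ along the limit ODE leaves $\{C_{\theta,r_\infty}(e)\}$ as the only admissible such set. Hence every subsequential limit equals $C_{\theta,r_\infty}(e)$, and the full flow converges smoothly to this cap as $t\to\infty$.
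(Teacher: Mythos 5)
Your argument is correct in outline but takes a genuinely different route from the paper's. Both proofs establish subsequential convergence to spherical caps of radius $r_\infty$ in essentially the same way (monotonicity of $W_{k,\theta}$, equality in Newton--MacLaurin, preservation of $W_{n,\theta}$). Where you diverge is in pinning down the center. The paper's argument is direct and elementary: it tracks $r(\cdot,t)$, the radius of the cap around the \emph{fixed} $e$ through each point $x(\cdot,t)$, observes from the barriers that $r_{\max}$ is non-increasing and $r_{\min}$ non-decreasing, and at a maximizing point derives $\frac{d}{dt}r_{\max}\le -C\varepsilon$ whenever $r_{\max}>r_\infty+\varepsilon$ (using $F\to 1/r_\infty$), contradicting $\frac{d}{dt}r_{\max}\to 0$; squeezing then gives full convergence to $C_{\theta,r_\infty}(e)$. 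Your approach instead passes to the $\omega$-limit set, restarts the flow from a subsequential limit cap $C_{\theta,r_\infty}(e')$, shows the evolved limits remain caps because the $W_{k,\theta}$ are saturated, and derives the limit ODE on centers. Your identity $f=\frac{\lambda^2}{r}\langle x, e-\langle e,e'\rangle e'\rangle$ on $C_{\theta,r}(e')$ and the resulting ODE $\dot e'=\lambda(e-\langle e,e'\rangle e')$, $\frac{d}{ds}\langle e,e'\rangle=\lambda(1-\langle e,e'\rangle^2)$, both check out (the cap, being strictly convex, is contained in no hyperplane through the origin, so the spanning step is fine). Your approach is more conceptual and reveals the gradient-like structure on the moduli space of caps; the paper's is more self-contained and avoids any $\omega$-limit-set machinery.

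One genuine gap as written: you invoke only \emph{forward} invariance of the $\omega$-limit set. That is not enough. A compact connected set of the form $\{e'(s): s\ge 0\}\cup\{e\}$, i.e.\ a forward orbit segment together with its attracting endpoint, is forward-invariant, contains no $-e$, and is not a single point --- so forward invariance plus the barrier exclusion of $-e$ does not by itself force $S=\{e\}$. What you need is the standard but stronger statement that for a continuous semiflow with precompact orbit, $\omega(x_0)$ is \emph{fully} invariant, $T(s)\omega(x_0)=\omega(x_0)$ for all $s\ge 0$, so that every point of $\omega(x_0)$ has a backward orbit inside $\omega(x_0)$. With that, a non-equilibrium $e'\in S$ would have a backward orbit in $S$ converging to $-e$, so $-e\in S$, contradicting the barriers; hence $S\subset\{e,-e\}$, and connectedness plus the barrier give $S=\{e\}$. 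So the conclusion stands, but you should cite full invariance (e.g.\ Hale, Sell--You) rather than forward invariance alone, and also note that continuous dependence on initial data for the reduced scalar oblique-derivative problem of Proposition \ref{global existence} is what underlies both the restart construction and the invariance of $\omega(x_0)$.
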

\begin{proof}
Recall $ W_{1,\theta}(\widehat{\S_t})$ is non-decreasing. Precisely,
	\begin{eqnarray*}
	\p_t  W_{1,\theta}(\widehat{\S_t})=\frac{1}{n+1}\int_{\S_t} \left(\frac{\sigma_1\sigma_{n-1}}{n\sigma_n}
	-n\right)\langle x+\cos\theta  \nu,e\rangle \geq 0.
\end{eqnarray*} Since we have long time existence and  uniform $C^\infty$-estimates, we obtain 
\begin{eqnarray*}
\int_{\S_t} \left(\frac{\sigma_1\sigma_{n-1}}{n\sigma_n}
-n\right)\langle x+\cos\theta  \nu,e\rangle \to 0, \hbox{ as }t\to +\infty.
\end{eqnarray*}Hence from the equality characterization of the Newton-MacLaurin inequality $\frac{\sigma_1\sigma_{n-1}}{n\sigma_n}\geq n$, we see that any convergent subsequence must converge to a spherical cap. 
Next we show that any limit of a convergent subsequence  is uniquely determined, which implies the flow smoothly converges to a unique spherical cap. We shall use the argument in \cite{SWX}.

Note that we have proved that $x(\cdot,t)$ subconverges smoothly to a capillary boundary spherical cap $ C_{\theta,r_\infty}(e_\infty)$.  Since $ W_{n,\theta}$ is preserved along the flow \eqref{flow with capillary}, the radius $r_\infty$ is independent of the choice of the subsequence of $t$.
We next show in the following that $e_\infty=e$. Denote $r(\cdot,t)$ be the radius of the unique spherical cap  centered at $e$ with contact angle $\theta$ passing through the point $x(\cdot,t)$. 
Due to the
spherical barrier estimate, i.e. Proposition \ref{barrier est}, we know
\begin{eqnarray*}
	r_{\max}(t):=\max r(\cdot,t)=r(\xi_t,t),
\end{eqnarray*}  is non-increasing with respect to $t$, hence the limit $\lim\limits_{t\to +\infty} r_{\text{max}}(t)$ exists. Next we claim that
\begin{eqnarray}\label{max limit}
	\lim_{t\to +\infty} r_{\max}(t)=r_\infty.
\end{eqnarray}
We prove this claim by contradiction. Suppose \eqref{max limit} is not true, then there exists $\varepsilon>0$ such that
\begin{eqnarray}\label{lim of radius}
	r_{\max}(t)> r_\infty +{\varepsilon}, \hbox{ for } t \hbox{ large enough.}
\end{eqnarray} 

By definition, $r(\cdot,t)$ satisfies
\begin{eqnarray}\label{spherical cap}
	2\langle x,e\rangle \sqrt{r^2+2r\cos\theta+1}=|x|^2+2r\cos\theta+1.
\end{eqnarray}
%Recall that $\p_t x=f\nu+T :=f\widetilde{\nu}$, where $\widetilde{\nu}=\nu+\cot\theta \overline{\mu}$ for some smooth extension $\overline{\mu}$ of $\mu$, to be specified later for our purpose.
By taking the time derivative for \eqref{spherical cap}, we get
\begin{eqnarray*}
&&\left(\frac{(r+\cos\theta)\langle x,e\rangle}{\sqrt{r^2+2r\cos\theta+1}}-\cos\theta\right) \p_t r
\\&=& \< \p_t x, x- \sqrt{r^2+2r\cos\theta+1}e\>= \< f\nu+T, x- \sqrt{r^2+2r\cos\theta+1}e\>.
\end{eqnarray*}
We evaluate at $(\xi_t,t)$. Since $\S_t$ is tangential to $C_{\theta, r}(e)$ at $x(\xi_t, t)$,
we have
$$\nu_{\S_t}(\xi_t, t)=\nu_{\p C_{\theta, r}(e)}(\xi_t, t)= \frac{1}{r}\left(x- \sqrt{r^2+2r\cos\theta+1}e\right).$$
Thus we deduce
\begin{eqnarray}
&&\left(\frac{(r_{\max}+\cos\theta)\langle x,e\rangle}{\sqrt{r_{\max}^2+2r_{\max}\cos\theta+1}}-\cos\theta\right)\frac{d}{dt}r_{\max}
= r\left(\frac{\langle x+\cos\theta  \nu, e\rangle }{F}-\langle X_e,\nu\rangle\right).\nonumber\\
\label{r-max}
\end{eqnarray}
We first claim that there exists $\delta_1>0$ such that \begin{eqnarray}\label{claim1}
\frac{(r_{\max}+\cos\theta)\langle x,e\rangle}{\sqrt{r_{\max}^2+2r_{\max}\cos\theta+1}}-\cos\theta\ge\delta_1.
\end{eqnarray}
This follows directly from \eqref{spherical cap}. In fact, from \eqref{spherical cap}, we see
\begin{eqnarray}\label{spherical cap1}
	\langle x,e\rangle^2(r^2+2r\cos\theta+1)=\frac{(|x|^2+1)^2}{4}+r\cos\theta(|x|^2+1)+r^2\cos^2\theta.
\end{eqnarray}
By using \eqref{spherical cap} again and \eqref{spherical cap1}, we have
\begin{eqnarray*}
&&(r+\cos\theta)\langle x,e\rangle-\cos\theta \sqrt{r^2+2r\cos\theta+1}
\\&=&(r+\cos\theta)\langle x,e\rangle-\cos\theta \frac{|x|^2+2r\cos\theta+1}{2\langle x,e\rangle}
\\&=&\frac{1}{r \langle x,e\rangle}\left[r(r+\cos\theta)\langle x,e\rangle^2-r^2\cos^2\theta-r\cos\theta \frac{|x|^2+1}{2}\right]
\\&=&\frac{1}{r \langle x,e\rangle}\left[\frac{(|x|^2+1)^2}{4}+r\cos\theta (|x|^2+1)-r\cos\theta \langle x,e\rangle^2-\langle x,e\rangle^2- r\cos\theta \frac{|x|^2+1}{2}\right]
\\&=&\frac{1}{r \langle x,e\rangle}\left[\frac{(|x-e||x+e|)^2}{4}+r\cos\theta\frac{|x-e|^2}{2}\right].
\end{eqnarray*}
This yields \eqref{claim1}.

Since the spherical caps $  C_{\theta,r_{\max} }(e)$ are the static solution to \eqref{flow with capillary} and $x(\cdot,t)$ is tangential to $C_{\theta,r_{\max} }(e)$ at $x(\xi_t,t)$, we see from \eqref{static-eq}
\begin{eqnarray}\label{xeq44}
	\frac{\langle x+\cos\theta\nu,e\rangle}{\langle X_e,\nu\rangle }\Big|_{x(\xi_t,t)}= \frac{\langle x+\cos\theta\nu,e\rangle}{\langle X_e,\nu\rangle }\Big|_{C_{\theta,r_{\max} }(e)}=\frac{1}{r_{\max}(t)}.
\end{eqnarray}
Since $x(\cdot,t)$ converges to $C_{\theta,r_\infty  }(e_\infty)$ and $r_\infty$ is uniquely determined, we have
\begin{eqnarray*}
	F=\frac{n\sigma_n}{\sigma_{n-1}}\to \frac{1}{r_\infty} \quad \text{uniformly},
\end{eqnarray*}as $t\to+\infty$. 
Thus there exists $T_0>0$ such that 
\begin{eqnarray*}
	\frac{1}{F} - {r_\infty} <\frac{\ep}{2},
\end{eqnarray*}
and hence 
\begin{eqnarray*}
	\frac{1}{F} - {r_{\max}(t)} <-\frac{\ep}{2},
\end{eqnarray*}for all $t>T_0$.
Taking into account of \eqref{xeq44}, we see
\begin{eqnarray*}
	\left(\frac{1}{F} - \frac{\langle x+\cos\theta\nu,e\rangle}{\langle X_e,\nu\rangle }\right)\Big|_{x(t,\xi_t)} <-\frac{\ep}{2},
\end{eqnarray*}for all $t>T_0$.
Finally, we conclude from \eqref{r-max} that there exists some $C>0$ such that $$\frac{d}{dt}r_{\max}\le -C\ep.$$
This is a contradiction  to the fact  that $\lim\limits_{t\to +\infty}\frac{d}{dt} r_{\max}=0$, hence the claim \eqref{max limit} is true. Similarly, we can obtain that \begin{eqnarray}\label{min limit}
	\lim_{t\to +\infty} r_{\min}(t)=r_\infty.
\end{eqnarray}Combining this with claim \eqref{max limit}, we know that $\lim\limits_{t\to\infty} r(t,\cdot)=r_\infty$, which is a constant.  This implies any limit of a convergent subsequence is the spherical cap arounnd $e$, the claimed uniqueness. We complete the proof of Proposition \ref{converge limit}.
\end{proof}
Combining Propositions \ref{global existence} and \ref{converge limit}, we get the assertion in Theorem \ref{flow-result}.

\subsection{Proof of Theorem \ref{thm 1}}\

With the preparations above, using the same approach as in \cite[Section 4]{SWX}, we can  prove the main result, i.e. Theorem \ref{thm 1}. %For the concise of this paper, we just sketch the main idea.

\begin{proof}[\textbf{Proof of Theorem \ref{thm 1}}]

	Firstly, for given $\theta\in(0,\frac{\pi}{2})$, we claim that 
\begin{eqnarray*}
	{	f_{k}}(r):= W_{k}(\widehat{C_{ \theta,r}(e)}),
\end{eqnarray*}is strictly increasing with respect to $r>0$.
To prove this claim, given $r_0>0$,  consider the hypersurface flow $y:\overline{\BB}^n\times [0,+\infty)\to \overline{\BB}^{n+1}$ given by
\begin{eqnarray*}
	\begin{cases}
		\p_t y =X_e, \quad \text{ on }\overline{\BB}^n\times [0,+\infty) \\
		y(\cdot,0)=C_{\theta, r_0}(e)\quad \text{ on }\overline{\BB}^n.
	\end{cases}
\end{eqnarray*}
The flow hypersurfaces of this flow are $C_{\theta,r(t)}(e)$, where the radius $r(t)$ is some strictly increasing function. In fact, $r(t)$ satisfies  
\begin{eqnarray*}
r'(t)= r\sqrt{r^2+2r\cos\theta+1},
\end{eqnarray*}with $r(0)=r_0$. 
Hence from Theorem \ref{fvf}, we have
\begin{eqnarray*}
	\p_t  W_{k,\theta}(\widehat{ {C}_{\theta,r(t)}(e)})=\frac{k}{n+1}\binom{n}{k-1}^{-1}\int_{C_{\theta,r(t)}(e)} \sigma_k \langle X_e,\nu\rangle dA_t>0,
\end{eqnarray*}where the last inequality follows from the strictly convex of the spherical caps, hence it yields that $f_{k}(r)$ are strictly increasing functions, which follows the claim.

Assume that $\S$ is strictly convex. By evolving along the flow \eqref{flow with capillary} with initial strictly hypersurface $\S $, we denote  its convergence limit be the spherical cap given by  $C_{\theta,r_\infty}(e)$, then we have
\begin{eqnarray*}
	 W_{n,\theta}(\widehat{\S})&=	& W_{n,\theta}(\widehat{C_{\theta,r_\infty}(e)})=f_{n}(r_\infty) \\&=&f_{n}\circ f^{-1}_{k}\circ f_{k}(r_\infty)\geq f_{n}\circ f^{-1}_{k} (  W_{k,\theta}(\widehat{\S})),
\end{eqnarray*} 
moreover, equality holds iff $\S$ is a spherical cap. Also, 
\begin{eqnarray*}
	 W_{n+1,\theta}(\widehat{\S})&=& W_{n+1,\theta}(\widehat{C_{\theta,r_\infty}(e)}). \end{eqnarray*} 
Note that when $r\to 0$, $C_{\theta, r}(e)$ is more and more close to the cap $C^{\RR^{n+1}_+}_{\theta, r}$ of radius $r$ in the half space $\RR^{n+1}_+$ with $\theta$-capillary boundary. 
Then $$(n+1)\lim_{r\to 0}W_{n+1,\theta}(\widehat{C_{\theta,r}(e)})=\lim_{r\to 0}\int_{C^{\RR^{n+1}_+}_{\theta, r}} H_n= \frac{\omega_n}{2 }I_{\sin^2\theta}(\frac{n}{2},\frac{1}{2}).$$
The second equality follows from the simple fact that $$\left|C^{\RR^{n+1}_+}_{\theta, r}\right|=\left|C^{\RR^{n+1}_+}_{\theta, 1}\right|r^n= \frac{\omega_n}{2 }I_{\sin^2\theta}(\frac{n}{2},\frac{1}{2}) r^n.$$ 

When $\S$ is convex but not strictly convex, the inequality \eqref{af ineq} and the equality \eqref{gbc} follows by approximation. The equality characterization in \eqref{af ineq}  can be proved  similar to \cite{SWX} Section 4, by using an argument of \cite{GL09}. We omit the details here.
\end{proof}

In particular, we have following simplified expansion formulas for a convex hypersurface $\S$ with $\theta$-capillary boundary in $\bar{\BB}^{n+1}$.
 \begin{enumerate}

\item 	For $n=2$, 
	\begin{align}\label{topo invariant}
		3 W_{3,\theta}(\widehat{\S})= \int_\S H_2dA-\cos\theta|\widehat{\p\S}|+\sin\theta|\p\S|  = {2\pi} (1- \cos\theta ),
	\end{align}due to $
I_{\sin^2\theta}(1,\frac{1}{2}) =1-\cos\theta.$
\item 
%and	\begin{align}\label{topo invariant}		 W_{3,\theta}(\widehat{\S})=\frac{1}{3}\Big(\int_\S K-\cos\theta|\widehat{\p\S}|+\sin\theta|\p\S| \Big)=\frac{2\pi}{3}(1- \cos\theta ).	\end{align} 
For $n=3$, 
\begin{align*}
4W_{4,\theta}(\widehat{\S})%&=\int_\S H_3+\cos^2\theta \sum_{l=0}^2 (-1)^lC_3^l \tan^l\theta \W_l^{\mathbb{S}^3}(\widehat{\p\S})
	&=  \int_\S H_3dA+ \sin^2\theta\int_{\p\S} H_1^{\mathbb{S}^3}+|\widehat{\p\S}|-\sin\theta \cos\theta |\p\S| 
\\&= {2\pi} (\theta-\sin\theta\cos\theta),
\end{align*} due to  $
I_{\sin^2\theta}(\frac{3}{2},\frac{1}{2})=\frac{2\theta-\sin(2\theta)}{\pi}.$

\item For $n=4$,  
\begin{eqnarray*}
	5 W_{5,\theta}(\widehat{\S})&=&  \int_\S H_4dA+\sin^3\theta \int_{\p\S} H_2^{\SS^4}-\frac{3}{2}\cos\theta\sin^2\theta \int_{\p\S} H_1^{\SS^4}\\&&-\cos\theta (1+\frac{1}{2}\sin^2\theta) |\widehat{\p\S}|+\sin\theta (1-\frac{\sin^2\theta}{3}) |\p\S| 
	 \\&=& \frac{2\pi^2}{3}(2-2\cos\theta-\cos\theta \sin^2\theta ).
\end{eqnarray*}
 \end{enumerate}
\begin{rem}
For $n=2$, we can use another viewpoint to show \eqref{topo invariant} directly. From Gauss-Bonnet formula, we see
	\begin{eqnarray*}
		2\pi &=&\int_\S KdA+\int_{\p\S}\kappa_g ds =\int_\S KdA+\int_{\p\S}(\cos\theta\widehat{\k}+\sin\theta) ds
		\\&=&\int_\S K dA+\sin\theta |\p\S|+\cos\theta \big(2\pi -|\widehat{\p\S}|\big),
	\end{eqnarray*}where $K=H_2$ is the Gauss curvature, $\k_g$ and $\widehat{\k}$ denote the geodesic curvature of $\p\S$ in $\S$ and $\widehat{\p\S} $ respectively. From $\mu=\cos\theta \overline{\nu}+\sin\theta \overline{N}$ implies $\k_g=\cos\theta\widehat{\k}+\sin\theta$.
	Hence it follows
	\begin{eqnarray*}
		&&3W_{3,\theta}(\widehat{\S})= \int_\S KdA-\cos\theta|\widehat{\p\S}|+\sin\theta|\p\S|  
		=2\pi(1- \cos\theta ). 
	\end{eqnarray*}
\end{rem}

\

\noindent\textbf{Acknowledgment:} LW is supported by project funded by China 
Postdoctoral Science Foundation (No. 2021M702143) and NSFC (Grant No. 12171260). CX is supported by NSFC (Grant No. 11871406). Parts of this work was done while LW was visiting the Tianyuan Mathematical Center in Southeast China and school of mathematical sciences at Xiamen University under the support of NSFC (Grant No. 12126102). He would like to express his deep gratitude to the center for its hospitality. We thank the anonymous referee for his/her careful reading and critical comments.

\

\end{document}